\documentclass[10pt,reqno,a4paper]{mcom-l}
\usepackage{amsmath,amsthm,verbatim,amscd,amssymb,hyperref,graphicx}
\usepackage{exscale,color}
\usepackage{esint}
\usepackage{cases}
\usepackage{subfig}
\usepackage{epstopdf}
\usepackage{comment}
\usepackage{scalerel,stackengine}
\usepackage{appendix}
\usepackage{bm}
\newcommand{\nc}{\newcommand}
\nc{\MBB}{\mathbb}
\nc{\MBF}{\mathbf}
\nc{\MBM}[1]{\mbox{\boldmath $#1$}}
\nc{\MCAL}{\mathcal}
\nc{\RR}{\mathbb{R}}
\nc{\RA}{\rightarrow}
\nc{\OVR}{\overrightarrow}
\nc{\OVL}{\overline}
\nc{\UDL}{\underline}
\nc{\eps}{\varepsilon}
\nc{\OMG}{\omega}
\nc{\Lap}{\Delta}
\nc{\DPS}{\displaystyle}
\nc{\MSGN}{\mbox{sgn}}
\nc{\D}{\partial}
\nc{\FD}[1]{\frac{\D}{\D#1}}
\nc{\FDO}[2]{\frac{\D^{#2}}{\D^{#2}#1}}
\nc{\PAD}[2]{\frac{\D#2}{\D#1}}
\nc{\Grad}{\nabla}
\nc{\abs}[1]{\lvert#1\rvert}
\nc{\bigabs}[1]{\big\lvert#1\big\rvert}
\nc{\biggabs}[1]{\bigg\lvert#1\bigg\rvert}
\nc{\Bigabs}[1]{\Big\lvert#1\Big\rvert}
\nc{\Biggabs}[1]{\Bigg\lvert#1\Bigg\rvert}
\nc{\ip}[1]{\langle#1\rangle}
\nc{\bigip}[1]{\big\langle#1\big\rangle}
\nc{\biggip}[1]{\bigg\langle#1\bigg\rangle}
\nc{\Bigip}[1]{\Big\langle#1\Big\rangle}
\nc{\Biggip}[1]{\Bigg\langle#1\Bigg\rangle}
\nc{\bigpa}[1]{\big(#1\big)}
\nc{\biggpa}[1]{\bigg(#1\bigg)}
\nc{\Bigpa}[1]{\Big(#1\Big)}
\nc{\Biggpa}[1]{\Bigg(#1\Bigg)}
\nc{\bigmpa}[1]{\big[#1\big]}
\nc{\biggmpa}[1]{\bigg[#1\bigg]}
\nc{\Bigmpa}[1]{\Big[#1\Big]}
\nc{\Biggmpa}[1]{\Bigg[#1\Bigg]}
\nc{\bigbra}[1]{\big\{#1\big\}}
\nc{\biggbra}[1]{\bigg\{#1\bigg\}}
\nc{\Bigbra}[1]{\Big\{#1\Big\}}
\nc{\Biggbra}[1]{\Bigg\{#1\Bigg\}}
\nc{\bigparv}[2]{\bigpa{#1}\big\rvert_{#2}}
\nc{\biggparv}[2]{\biggpa{#1}\bigg\rvert_{#2}}
\nc{\Bigparv}[2]{\Bigpa{#1}\Big\rvert_{#2}}
\nc{\Biggparv}[2]{\Biggpa{#1}\Bigg\rvert_{#2}}
\nc{\bigparvt}[3]{\bigpa{#1}\big\rvert_{#2}^{#3}}
\nc{\biggparvt}[3]{\biggpa{#1}\bigg\rvert_{#2}^{#3}}
\nc{\Bigparvt}[3]{\Bigpa{#1}\Big\rvert_{#2}^{#3}}
\nc{\norm}[1]{\lVert#1\rVert}
\nc{\bignorm}[1]{\big\lVert#1\big\rVert}
\nc{\biggnorm}[1]{\bigg\lVert#1\bigg\rVert}
\nc{\Bignorm}[1]{\Big\lVert#1\Big\rVert}
\nc{\Biggnorm}[1]{\Bigg\lVert#1\Bigg\rVert}
\nc{\intdef}[2]{\int_{#1}^{#2}}
\makeatletter
\nc{\leqnomode}{\tagsleft@true}
\nc{\reqnomode}{\tagsleft@false}
\makeatother
\makeatletter
\newcount\my@repeat@count
\nc{\myrepeat}[2]{%
  \begingroup
  \my@repeat@count=\z@
  \@whilenum\my@repeat@count<#1\do{#2\advance\my@repeat@count\@ne}%
  \endgroup
}
\makeatother
\newtheorem{lemma}{Lemma}[section]%
\newtheorem{remark}[lemma]{Remark}%
\newtheorem{thm}[lemma]{Theorem}%
\stackMath
\nc\reallywidehat[1]{%
\savestack{\tmpbox}{\stretchto{%
  \scaleto{%
    \scalerel*[\widthof{\ensuremath{#1}}]{\kern-.6pt\bigwedge\kern-.6pt}%
    {\rule[-\textheight/2]{1ex}{\textheight}}
  }{\textheight}%
}{0.5ex}}%
\stackon[1pt]{#1}{\tmpbox}%
}
\parskip 1ex
\pagestyle{plain}

\begin{document}

\title{Convergence of the boundary integral method for interfacial Stokes flow}

\author{David M. Ambrose}
\address{Department of Mathematics, Drexel University, Philadelphia, PA 09104} 
\email{ambrose@math.drexel.edu}

\author{Michael Siegel}
\address{Department of Mathematical Sciences and Center for Applied Mathematics and Statistics, New Jersey Institute of Technology, Newark, NJ 07102}
\email{misieg@njit.edu}

\author{Keyang Zhang}
\address{Department of Mathematical Sciences and Center for Applied Mathematics and Statistics, New Jersey Institute of Technology, Newark, NJ 07102}
\email{kz78@njit.edu}

\subjclass[2020]{65N38, 76T06}
\thanks{The authors acknowledge support from NSF grants DMS-1907684 (DMA) and DMS-1909407 (MS)}

\date{
\today}
\maketitle
\begin{abstract}
Boundary integral numerical methods are among the most accurate methods for interfacial Stokes flow, and are widely applied. They have the advantage that only the boundary of the domain must be discretized, which reduces the number of discretization points and allows the treatment of complicated interfaces. Despite their popularity, there is no analysis of the convergence of these methods for interfacial Stokes flow. In practice, the stability of discretizations of the boundary integral formulation can depend sensitively on details of the discretization and on the application of numerical filters. We present a convergence analysis of the boundary integral method for Stokes flow, focusing on a rather general method for computing the evolution of an elastic capsule, viscous drop, or inviscid bubble in 2D strain and shear flows.  
The analysis clarifies the role of numerical filters in practical computations.
\end{abstract}

\section{Introduction}

Boundary integral (BI) methods are among the most popular methods for computing interfacial fluid flow.
They have been widely applied to compute the evolution of interfaces in potential flow, including the Kelvin-Helmholtz
and Rayleigh-Taylor instabilities \cite{Baker:1993}, \cite{Krasny:1986},
Hele-Shaw flow \cite{HLS:1994}, \cite{Li:2007}, and water waves \cite{BMOS2:1980}, \cite{BHL:1996}.
They have also been extensively applied in Stokes flow to simulate the evolution of drops, bubbles, elastic capsules and vesicles  \cite{HSB:2012}, \cite{CPoz:1992},  \cite{STLVL:2010}, \cite{Veerapaneni:2009}.
Boundary integral methods have been particularly important in micro- and bio- fluidic applications in which viscous forces are dominant over inertial ones. Such flows can therefore be accurately modeled by the Stokes equations. 
Overviews of the BI method applied to Stokes flow in micro- and bio- fluidic applications are given in \cite{Kim:2013}, \cite{Pozrikidis2010}.

The main advantage of boundary integral methods is that they only involve  surface quantities, thereby reducing the dimension of the problem. 
This simplifies the handling of complex geometries and reduces the number of discretization points.
Another significant advantage is that they can be made to have high accuracy. Boundary integral methods use a sharp interface formulation, which allows accurate treatment of the discontinuity in normal stress due to surface tension forces or elastic membrane stress at the interface. 
Spectrally accurate discretizations of boundary integral formulations are now routinely implemented for 2D interfacial Stokes flow of drops and bubbles, see,
e.g.,  \cite{Crowdy:2005}, \cite{MCAK1:2001}, \cite{MCAK2:2002}, \cite{Ojala:2015}, \cite{PST:2019}, \cite{XBS:2013} and references therein. Spectral or high-order boundary integral methods for inextensible vesicles or elastic capsules are  provided by  \cite{HSB:2012},  \cite{Marple:2016}, \cite{Quaife:2016}, and  \cite{Veerapaneni:2009}.
High order accurate discretizations of axisymmetric and 3D flow problems, although still a subject of current research, are increasingly common  \cite{Dodson:2009},  
 \cite{SorgentoneTornberg:2018},   \cite{Veerapaneni:2009a}, \cite{Veerapaneni:2011}, \cite{YoungHao:2012}.
As a result, boundary integral methods are a good choice in problems that demand high accuracy.

One of the greatest challenges in the practical implementation of boundary integral methods for time-evolution problems is that they are sensitive to numerical instabilities.
If left uncontrolled, these instabilities will dominate and adversely affect the accuracy of computations.  Numerical instabilities have been commonly observed in boundary integral computations for inviscid interfacial flow \cite{BMOS:1982}, \cite{BHL:1996},  \cite{JWD:1992}, \cite{LHC:1976}, \cite{DIP:1982}, and in spectrally accurate computations for Stokes flow  \cite{Crowdy:2005}, \cite{MCAK1:2001}, \cite{MCAK2:2002}, \cite{Ojala:2015}, \cite{PST:2019},  \cite{XBS:2013}.   They are typically controlled 
by application of numerical filtering or by deliasing through spectral padding.  Computations of interfacial flow with surface or elastic membrane tension can be even  more sensitive to numerical instabilities due to the presence of nonlinear terms with high-order spatial derivatives.   A major aim of the analysis presented here is to clarify the role of spatial discretization and filtering in controlling these aliasing-type instabilities for interfacial Stokes flow.

There are relatively few analyses of the stability and  convergence of BI methods for multi-phase flow simulations. This is in part due to the difficulty of the analysis involving nonstandard (nonlocal) governing equations. Hou, Lowengrub and Krasny \cite{Hou:1991} prove the convergence of a BI method for vortex sheets in inviscid flow without surface tension.  
Baker and Nachbin \cite{GBAN:2006} 
identify common reasons for numerical instability in the same problem as \cite{Hou:1991} when surface tension is present.
In fundamental work, Beale, Hou and Lowengrub \cite{BHL:1996} prove convergence in the fully nonlinear regime of a BI method for water waves in two dimensions both with and without surface tension. 
They 
discovered that delicate balances must be sustained among terms in singular integrals and derivatives at the discrete level in order to preserve numerical stability.
They also noticed that numerical filtering is necessary at certain places to prevent the discretization from producing new instabilities in the high modes.
Ceniceros and Hou \cite{CH:1998} extend the analysis of \cite{BHL:1996} to include two-phase flow and surface tension, and
Hou and Zhang \cite{HZ:2002} generalize the analysis of \cite{BHL:1996} to 3D.
Other convergence analyses have been  performed for Darcy-law flow problems.
Hao et al. \cite{Hao2018} show convergence of a boundary integral method for a generalized Darcy-law model of 2D tumor growth.
Ambrose, Liu and Siegel \cite{ALS2017:2017} prove convergence of a boundary integral method for 3D Darcy-law flow with surface tension \cite{Ambrose2013}. 

Despite the significance of the above-mentioned  convergence studies for BI methods in inviscid and Darcy-law interfacial flow, there is no convergence analysis that we are aware of for the important case of interfacial Stokes flow.
In this paper, we provide such an analysis.
The main difficulty of this analysis, compared to previous convergence studies for water waves, is a more complicated boundary integral formulation for the Stokes problem, and the presence of high derivatives in the boundary condition for an elastic membrane.

In the analysis of the stability of our method, we make significant use of the stabilizing effects of the highest derivative or leading-order terms (so-called parabolic smoothing) to control lower-order terms.
As in the water wave problem of \cite{BHL:1996}, 
we find that a targeted application of numerical filtering is necessary to prove stability in the Stokes-interface problem.
This is consistent with numerical implementations of spectrally accurate methods for the evolution of drops, bubbles and elastic capsules in Stokes flow, e.g.,  \cite{HSB:2012}, \cite{MCAK1:2001}, \cite{Ojala:2015}, \cite{PST:2019}, \cite{XBS:2013}, which also find the need for some form of numerical filtering or dealiasing for stability.
However, to make use of the parabolic smoothing in the elastic capsule problem and to minimize the amount of numerical filtering, we find it important that filtering \textit{not} be applied to the leading-order or highest derivative terms. 
Based on our analysis, we present a numerical scheme that utilizes a minimal amount of filtering yet is provably stable, even in the fully nonlinear regime. Note that the specific  filtering applied in our method is not unique, and other filtering techniques (e.g., zero padding) may give stable schemes.

In our convergence analysis, we consider a rather general boundary integral formulation which governs the time-dependent evolution 
of a Hookean elastic capsule in 2D Stokes flow for an externally imposed straining or shearing flow, but also encompasses the deformation of a drop or bubble with constant surface tension. An elastic capsule is a drop or bubble that is enclosed by a thin, elastic membrane and suspended in an external fluid.
It serves as a simple mechanical model of a cell or vesicle that is deformed by a fluid flow.  Numerical studies of capsules in fluid flow performed with various membrane constitutive laws include \cite{BealeStrain:2008}, \cite{Dodson2009:2009}, \cite{HSB:2012},  \cite{Veerapaneni:2009}, \cite{Walter2010:2010}. If the parameter governing membrane bending stress is set to zero, one recovers the governing equations and numerical method for two-fluid flow about a drop or bubble with constant interfacial tension.  This case is therefore also included in our convergence analysis.

The algorithm we analyze is closely based on a spectrally accurate numerical method for the evolution of a drop or bubble in an extensional flow that was developed by Kropinski \cite{MCAK1:2001}, \cite{MCAK2:2002} and is extensively used, see, e.g.,  
\cite{Ojala:2015},  \cite{PST:2019}, \cite{XBS:2013}.
The method of Kropinski makes use of a complex-variable description of the problem known as the Sherman-Lauricella formulation, as well as a construction due to Hou, Lowengrub and Shelley \cite{HLS:1994} in which the interface is prescribed by its tangent angle $\theta(\alpha,t)$ and an equal-arclength parameter $\alpha$, so that $\PAD{\alpha}{s}$ is constant in $\alpha$ (here $s(\alpha,t)$ measures arclength from a reference point at $\alpha=0$).
This so-called arclength-angle formulation was originally developed to overcome numerical stiffness in the time-discretization, but is also convenient for analysis \cite{Ambrose2003}, \cite{Ambrose2017}.  We further adapt this formulation to Stokes flow with elastic surfaces. A complication of this approach is that the location of material points on the interface must be tracked to determine the elastic or ``stretching" tension in the membrane. This is not readily available from the equal arclength parameterization. Following \cite{HSB:2012}, we introduce a backwards map $\alpha_0(\alpha,t)$ which gives the location of the material parameter $\alpha_0$ in terms of the equal arclength parameter $\alpha$, and derive a time evolution equation for $\alpha_0(\alpha,t)$. The discrete version of this time-evolution equation is then incorporated into the  energy estimates to show stability of our method. The algorithm  presented here also generalizes a spectrally accurate method for the evolution of an elastic capsule in an extensional flow, which was developed in \cite{HSB:2012}, to include a viscous interior fluid and nonzero membrane bending stress. 

Following the earlier convergence studies for inviscid flow, our analysis is discrete in space and continuous in time.  The main result is contained in Theorem 1, which proves that the numerical method with filtering converges to the exact solution with spectral accuracy. The convergence proof follows the general framework of \cite{BHL:1996} and uses energy estimates in discrete Sobolev spaces. It relies on the smoothness of the underlying solution to the continuous evolution problem, see \cite{Lin:2019}, \cite{Mori:2019} for relevant results on the existence and regularity of the continuous problem.
When there is a jump in viscosity between the fluid in the  interior of the capsule or drop and the exterior fluid, an additional system of Fredholm integral equations must be solved to obtain
the density in the boundary integrals of the Sherman-Lauricella formulation. Analysis of the discrete version  of this system of integral equations presents an additional complication of the proof. However, we are able to show that the discrete system is invertible and that the inverse operator is bounded for sufficiently small viscosity contrast. This provides a convergence proof of the full evolution problem, including viscosity contrast, as long as that contrast is sufficiently small.

The governing equations for our problem are presented in Section 2, and 
the BI formulation is given in Section 3.
For our BI formulation, we present in Section 4 a spectrally accurate numerical discretization. 
Several preliminary lemmas are presented in Section 5 which provide error estimates on numerical differentiation, integration, and filtering operators.
We then prove consistency of our numerical method in Section 6.
The statement of the main convergence theorem, Theorem \ref{MainThm}, is given in Section 7.
Some preliminary estimates used in the proof of stability are given Sections 8 and 9.
Evolution equations for the errors are presented in Section 10, and the  proof of stability (and hence Theorem \ref{MainThm}) by energy estimates in the special case of viscosity matched fluids and nonzero bending stress is given in Section 11. The case of unequal viscosities is discussed in Section 12. Modifications to the convergence analysis for the drop problem, with zero membrane bending stress and constant interfacial tension, are given in Section 13.
Concluding remarks are provided in Section 14.  Proofs of critical lemmas and estimates of nonlinear terms in the variation of velocity are given in the Appendix.

\section{Problem formulation}
We present the governing equations for a single elastic capsule in 2D Stokes flow.
The exterior fluid domain is denoted by $\Omega$, and we use a superscript $i$ for variables and parameters in the inner fluid.  The membrane surface is given by $\D\Omega=\gamma$.

The drop and exterior fluid are assumed to have the same density, so gravitational effects are absent.
On $\gamma$ the unit normal vector $\MBF{n}$ points toward the exterior fluid.
The unit tangent $\MBF{t}$ points in the direction such that the interior fluid is to the right as $\gamma$ is traversed clockwise.
We define an angle $\theta$ measured counterclockwise positive from the positive $x-$axis to $\MBF{t}$.
The geometry is illustrated in Figure 1.
\begin{figure}[ht!] 
\begin{center}  
\includegraphics[scale=0.5]{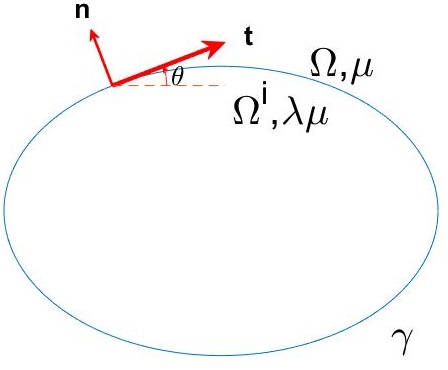}
\caption{\label{fig-1}Fluid drop with viscosity $\lambda\mu$ occupying region $\Omega^i$ is immersed in a fluid with viscosity $\mu$ occupying region $\Omega$}
\end{center}
\end{figure}
The local curvature of the interface is $\kappa=-\PAD{s}{\theta}$ and is positive when the shape is convex.
Here, $s$ is an arclength parameter that increases as $\gamma$ is traversed clockwise.

In dimensionless form, the Stokes equations governing fluid flow are
\begin{numcases}{ }
\Lap \MBF{u}=\Grad p\mbox{, }\Grad\cdot\MBF{u}=0\mbox{,}\quad \MBF{x}\in\Omega\mbox{,} \label{Outer_stokes}
\\
\lambda\Lap \MBF{u}^i=\Grad p^i\mbox{, }\Grad\cdot\MBF{u}^i=0\mbox{,}\quad\MBF{x}\in\Omega^i\mbox{,} \label{Inner_stokes}
\end{numcases}
where $p(\MBF{x})$ and $\MBF{u}(\MBF{x})$ are the pressure and velocity fields and $\lambda=\frac{\mu^i}{\mu}$ is the viscosity ratio.
The fluid velocity is taken to be continuous across the interface,
i.e., $\MBF{u}(\MBF{x})=\MBF{u}^i(\MBF{x})$ for $\MBF{x}\in\gamma$.

The area enclosed by the capsule is conserved, and lengths are nondimensionalized by the radius $R$ of the circular capsule with the same area.
Velocities are nondimensionalized by $U$, where $U$ will be specified below.
Time is nondimensionalized by $\frac{R}{U}$, and pressure by $\frac{U\mu}{R}$.
At $t=0$ the capsule can have arbitrary shape and membrane tension.

The no slip condition on the capsule surface is given by
\begin{equation}\label{interfacevel}
\frac{d\MBF{x}}{dt}=\MBF{u}(\MBF{x},t) \quad \mbox{for } \MBF{x}\in\gamma\mbox{,}
\end{equation}
Equation (\ref{interfacevel}) satisfies the kinematic condition that $\frac{d\MBF{x}}{dt}\cdot \MBF{n}=\MBF{u}\cdot\MBF{n}$ on $\gamma$. 
The far-field boundary condition is taken to be a general incompressible linear flow
\begin{equation} \label{ff}
\DPS\lim_{\abs{\MBF{x}}\RA\infty} \MBF{\MBF{u}(\MBF{x})}=\MBF{u}_{\infty}(\MBF{x})
=
\begin{pmatrix}
Q & B+\frac{G}{2} \\
B-\frac{G}{2} & -Q
\end{pmatrix}
\MBF{x} + O(|\mathbf{x}|^{-2})~~\mbox{as}~\mathbf{x} \rightarrow \infty \mbox{,}
\end{equation}
where the dimensionless parameters $(Q,B,G)$ are equal to their dimensional counterparts $Q_{\infty}$, etc., times the time scale $\frac{R}{U}$; i.e. $(Q,B,G)=\frac{R}{U}(Q_{\infty},B_{\infty},G_{\infty})$.
The far-field flow is a pure strain if $B=G=0$, and a linear shear flow if $Q=0$ and $G=2B$.
At the elastic membrane interface, we have the additional boundary condition that the total interfacial stress $\MBF{f}$ is balanced by the jump in fluid stress across the interface,
\begin{equation}\label{jumpeqn}
\bigmpa{T\cdot \MBF{n}}=\MBF{f}\mbox{,}
\end{equation}
where $T=-p+2 E_{ij}$ and $T^i=-p^i+2\lambda E_{ij}^i$, and where
\begin{equation}
E_{ij}=\frac{1}{2}\Bigpa{\PAD{x_j}{u_i}+\PAD{x_i}{u_j}}\mbox{,}
\end{equation}
is the stress tensor.
Here $\bigmpa{\cdot}$ denotes the jump
\begin{equation}
\bigmpa{\MBF{g}}=\MBF{g}-\MBF{g}^i\quad \mbox{for } \MBF{x}\in\gamma\mbox{.}
\end{equation}

An expression for the interfacial stress $\MBF{f}$ on the right hand side of (\ref{jumpeqn}) is obtained in \cite{Pozrikidis2001:2001} by an analysis of interfacial forces and torques.
The result is given in equation (3.16) of \cite{Pozrikidis2001:2001}, which in our notation is
\begin{align}\label{interface_force_eqn_5b_1}
\MBF{f}=-\FD{s}\bigpa{
\MCAL{S}\MBF{t}+q_B\MBF{n}
}
\end{align}
where $\MCAL{S}=\MCAL{S}(s)$ is the surface tension in terms of an arclength parameter $s$, and $q_B(s)=\frac{d m_B}{ds}$ with $m_B=m_B(s)$ the bending moment.
The constitutive equation for the bending moment $m$ is assumed to be the simple linear relation
\begin{align}\label{m_B_eqn_5c_1}
m_B(s)=\kappa_B \kappa(s)
\end{align}
where $\kappa_B$ is the (dimensionless) bending modulus, and $\kappa(s)$ is the interfacial curvature.
For the sake of simplicity, we consider a membrane with a Hookean or linear elastic response, for which the dimensional tension is given by \cite{CPoz:1992}
\begin{align} \label{tension_dimensional}
\tilde{\MCAL{S}}=E(\eta-1)\mbox{, }\eta=\PAD{s_R}{s}\mbox{.}
\end{align}
Here $\eta$ is the stretch ratio between arclength $s$ of the membrane at time $t$ and arclength $s_R$ in a reference configuration in which there is no tension in the membrane, and $E$ is the modulus of elasticity.
The tension is nondimensionalized by $E$, so that in dimensionless form
\begin{align}\label{tension_const_defn}
\MCAL{S}=\eta-1\mbox{.}
\end{align}
We also now define the characteristic velocity which is used for nondimensionalization as $U=\frac{E}{\mu}$.

\section{Boundary Integral Formulation}
For our boundary integral method, we adapt the Sherman-Lauricella formulation \cite{GKM:1996}, \cite{MCAK1:2001}, \cite{SGM:1964} to the capsule-membrane problem.
This is a complex variable formulation for which the primitive variables are expressed in terms of an integral over a complex density that is defined on the drop interface and satisfies a second kind Fredholm  equation.
It has been extensively used to solve Stokes equations for multi-phase  fluid flow, see e.g.,  \cite{MCAK1:2001}, \cite{MCAK2:2002}, \cite{KL:2011}, \cite{Ojala:2015}, \cite{PST:2019}, \cite{XBS:2013}.

In the Sherman-Lauricella formulation, the complex fluid velocity $(u_1+i u_2)(\tau,t)$ for $\tau=x_1+i x_2$ on the time evolving  interface $\gamma$ is  written in terms of Cauchy-type integrals that contain a single complex density $\omega(\zeta,t)$ \cite{MCAK1:2001}, \cite{XBS:2013}: 
\begin{align}\label{velinteqn}
u\rvert_{\gamma}=(u_1+iu_2)\rvert_{\gamma}&=-\frac{1}{2\pi}\mbox{P.V.}\int_{\gamma} \omega(\zeta,t)\Bigpa{\frac{d\zeta}{\zeta-\tau}+\frac{d\OVL{\zeta}}{\OVL{\zeta}-\OVL{\tau}}}
\nonumber \\
&+\frac{1}{2\pi}\int_{\gamma} \OVL{\omega(\zeta,t)} d\frac{\zeta-\tau}{\OVL{\zeta}-\OVL{\tau}}
+(Q+iB)\OVL{\tau}-\frac{iG}{2}\tau.
\end{align}
The apparent singularity for $\zeta$ near $\tau$ in the second integral is removable, but in the first integral the P.V. indicates that it is to be interpreted as a Cauchy principal value integral.

The complex density $\omega$ satisfies an integral equation which is obtained by modifying the derivation of \cite{MCAK1:2001}, \cite{XBS:2013} for drops to incorporate the more general elastic membrane stress in (\ref{interface_force_eqn_5b_1}). If we denote the interface $\gamma$ by $\tau(s,t)$, then the final form that the equation takes can be written as
\begin{align}\label{finalforminteqn}
\omega(\tau,t)+&\frac{\beta}{2\pi i}\int_{\gamma} \omega(\zeta,t)d\ln\Bigpa{\frac{\zeta-\tau}{\OVL{\zeta}-\OVL{\tau}}}
+\frac{\beta}{2\pi i}\int_{\gamma} \OVL{\omega(\zeta,t)}d\frac{\zeta-\tau}{\OVL{\zeta}-\OVL{\tau}}
\nonumber \\
&=-\frac{\chi}{2}\left[ (\MCAL{S}+\kappa_B\kappa^2)\tau_s-\kappa_B\tau_{sss} \right]
-\beta(B-iQ)\OVL{\tau}-2\beta H(t)\mbox{,}
\end{align}
where $\beta=\frac{1-\lambda}{1+\lambda}$ and $\chi=\frac{1}{1+\lambda}$, and a subscript $s$ denotes derivative. The expression in brackets on the right hand side of (\ref{finalforminteqn}) is the integral with respect to $s$ of the interfacial stress in (\ref{interface_force_eqn_5b_1}), represented using complex variables. 
When $\kappa_B=0$ and $\MCAL{S}=constant$, (\ref{finalforminteqn}) reduces to the corresponding equation for a drop interface  \cite{XBS:2013}.  
The apparent singularity at $\zeta=\tau$ in the two integrals on the left$-$hand side is removable. 
We set
\begin{equation}
H(t)=\frac{1}{2}\int_{\gamma} \omega(\zeta,t)ds;
\end{equation}
as demonstrated in \cite{MCAK1:2001}, this choice
removes a rank deficiency of the integral equation (\ref{finalforminteqn}) in the limit $\lambda=0$ of an inviscid drop and is consistent with $H(t)\equiv 0$, which is a consequence of the constant area of the interior region $\Omega^i$.

The fluid velocity on the interface, in terms of its normal and tangential components $u_n$ and $u_s$, is $\MBM{u}=u_n\MBM{n}+u_s\MBM{t}$, where the complex counterparts of the unit vectors $\MBM{n}$ and $\MBM{t}$ are $n$ and $s_T$ with $s_T=-in=\D_s\tau$.
It follows that
\begin{equation}\label{UComponentEqn}
u_n=\mbox{Re}\bigbra{u\rvert_{\gamma}\OVL{n}}
\quad \mbox{and} \quad
u_s=-\mbox{Im}\bigbra{u\rvert_{\gamma}\OVL{n}}
\mbox{,}
\end{equation}
on the interface $\gamma$.

For the numerical discretization of (\ref{finalforminteqn}), we introduce an equal arclength parametrization of the interface $\gamma$.
This is constructed following Hou, Lowengrub and Shelley \cite{HLS:1994}.
The spatial parametrization of the interface is given by $\alpha\in[-\pi,\pi]$, and a point $\tau$ on the interface has Cartesian coordinates $(x_1,x_2)$, so that $\tau(\alpha,t)=x_1(\alpha,t)+i x_2(\alpha,t)$.
The unit tangent vector $s_T$ and normal $n$ in complex form are $s_T=\PAD{s}{\tau}=\frac{\tau_{\alpha}}{s_{\alpha}}=\exp(i\theta)$
and $n = i s_T = i\exp(i\theta)$.
Differentiation of $\tau_\alpha=s_\alpha e^{i \theta}$  with respect to time implies that
\begin{align}\label{tau_alpha_t_eqn_9_1}
\tau_{\alpha t}=s_{\alpha t}e^{i\theta}+s_{\alpha}\theta_tie^{i\theta}\mbox{.}
\end{align}
When $\tau=\tau_m$ is a material point on the interface its velocity is equal to the local fluid velocity, per (\ref{interfacevel}), so that differentiation with respect to time implies that
\begin{equation}\label{complexevolveqn}
\frac{d\tau_m}{dt}=u_n i e^{i\theta}+u_s e^{i\theta}\mbox{,}
\end{equation}
where the subscript $m$ is used to denote material point.

However, the shape of the evolving interface is determined by the normal velocity component $u_n$ alone.
Although $u_s$ has physical meaning as the tangential component of the fluid velocity, if $u_s$ is replaced by any other smooth function $\phi_s(\alpha,t)$ in (\ref{complexevolveqn}), then $\tau$ still lies on the interface but is no longer a material point, and the role of $\phi_s$ is simply to implement a specific choice of  the interface parametrization via $\alpha$, without changing the interface shape or evolution.
The interfacial velocity generated by using $\phi_s$ instead of $u_s$ is denoted by $v$, and has complex form given by
\begin{align}\label{vel_eqn_9a_1}
v=\frac{d\tau}{dt}
=u_n i e^{i\theta}+\phi_s e^{i\theta}\mbox{.}
\end{align}
Differentiation of (\ref{vel_eqn_9a_1}) with respect to $\alpha$ gives a second relation for $\tau_{\alpha t}$,
\begin{equation}\label{tau_alpha_t_eqn_30}
\tau_{\alpha t}=\bigpa{(\phi_s)_{\alpha}-u_n\theta_{\alpha}}e^{i\theta}
+\bigpa{(u_n)_{\alpha}+\phi_s\theta_{\alpha}}ie^{i\theta}\mbox{.}
\end{equation}
Equating (\ref{tau_alpha_t_eqn_9_1}) and (\ref{tau_alpha_t_eqn_30}), we have
\begin{align}
s_{\alpha t}&=(\phi_s)_{\alpha}-u_n\theta_{\alpha}\mbox{,}\label{S_ALPHA_EVOLVEQN} \\
\theta_t&=\frac{1}{s_{\alpha}}\bigpa{(u_n)_{\alpha}-\phi_s\theta_{\alpha}}\mbox{,} \label{THETA_EVOLVEQN}
\end{align}
where $\gamma$ is now described parametrically by $s=s(\alpha,t)$ and $\theta=\theta(\alpha,t)$ instead of $x_1=x_1(\alpha,t)$ and $x_2=x_2(\alpha,t)$.

The equal arclength frame is chosen by setting $s_{\alpha}=s_{\alpha}(t)$ to be spatially constant along the interface, so that it varies in time only.
Then since $s_{\alpha}$ is always equal to its mean around $\gamma$, it follows from (\ref{S_ALPHA_EVOLVEQN}) that
\begin{equation}\label{S_ALPHA_EVOLVINTEQN_ORG}
s_{\alpha t}=(\phi_s)_{\alpha}-u_n\theta_{\alpha}=-\frac{1}{2\pi}\intdef{-\pi}{\pi}u_n\theta_{\alpha'}d\alpha'\mbox{.}
\end{equation}
Integration of the second of these equations with respect to $\alpha$ implies that
\begin{equation}\label{PHI_S_ALPHA_T}
\phi_s(\alpha,t)=\D_{\alpha}^{-1}\bigpa{
u_n\theta_{\alpha}-\ip{u_n\theta_{\alpha}}
}
\mbox{,}
\end{equation}
where
\begin{align}
\ip{f}=\frac{1}{2\pi}\intdef{-\pi}{\pi} f(\alpha')d\alpha'
\end{align}
is the mean of $f$, $\D_{\alpha}^{-1}$ is defined for a function $f$
with zero mean as
\begin{align} \label{antideriv}
\D_{\alpha}^{-1} f=
\DPS\sum_{\substack{k=-\infty\\k\neq 0}}^{\infty}
\frac{\hat{f}_k}{ik} e^{ik\alpha}\mbox{,}
\end{align}
and $\hat{f}_k$ are the Fourier coefficients of $f$.
In (\ref{PHI_S_ALPHA_T}) an arbitrary function of time has been set so that $\phi_s(\alpha,t)$ has zero mean.

When (\ref{PHI_S_ALPHA_T}) is substituted into (\ref{S_ALPHA_EVOLVEQN}) and (\ref{THETA_EVOLVEQN}), the system by which the dynamics of the interface is tracked becomes
\begin{align}
s_{\alpha t}=-\frac{1}{2\pi}\intdef{-\pi}{\pi}u_n\theta_{\alpha'}d\alpha'\mbox{,}\label{S_ALPHA_EVOLVINTEQN} \\
\theta_t =\frac{1}{\sigma}\Bigmpa{ \theta_{\alpha}\D_{\alpha}^{-1}\bigpa{
u_n\theta_{\alpha}-\ip{u_n\theta_{\alpha}}
} +(u_n)_{\alpha} }\mbox{.}\label{THETA_EVOLVINTEQN}
\end{align}

At each time step (\ref{S_ALPHA_EVOLVINTEQN}) and (\ref{THETA_EVOLVINTEQN}) are integrated forward in time, and $(s_{\alpha},\theta)$ are mapped to the Cartesian coordinates $(x_1,x_2)$ of points on $\gamma$.
The map is given by integration of $\tau_{\alpha}=s_{\alpha} e^{i\theta}$ with respect to $\alpha$ and is
\begin{align}
x_1(\alpha,t)=x_{1c}(t)+s_{\alpha}(t)\D_{\alpha}^{-1}(\cos(\theta(\alpha',t)))\mbox{,}\label{X1_POS_INT_EQN} \\
x_2(\alpha,t)=x_{2c}(t)+s_{\alpha}(t)\D_{\alpha}^{-1}(\sin(\theta(\alpha',t)))\mbox{,}\label{X2_POS_INT_EQN}
\end{align}
where $(x_{1c}(t),x_{2c}(t))$ is the constant Fourier mode of $(x_1(\alpha,t),x_2(\alpha,t))$, which is evolved from (\ref{vel_eqn_9a_1}) as
\begin{align}\label{vel_0_eqn_41}
\frac{d}{dt}(x_{1c}(t)+ix_{2c}(t))=\hat{v}_0(t)=\ip{v}\mbox{,}
\end{align}
where $\hat{v}_0(t)$ is the $k=0$ Fourier mode of interface velocity $v$.
\newline
\newline
\noindent\underline{\textit{Membrane tension}}.
\newline
\newline
A formula for the membrane tension $\MCAL{S}(\alpha,t)$ in terms of interface shape $\tau(\alpha,t)$ and the initial tension $\MCAL{S}(\alpha,0)$ is required to close the system of equations.
We obtain this formula by adapting the construction in \cite{HSB:2012}.

Recall that $\tau(\alpha,t)$ is a general nonmaterial parameterization of the interface at time $t$.
Introduce a parameterization $\tau(\alpha_p,0)$ of the initial profile in terms of a Lagrangian or material coordinate $\alpha_p$, and denote the location of the same material point at time $t>0$ by $\tau(\alpha_m(\alpha_p,t),t)$; this serves as a definition of a `forward' map $\alpha_m(\alpha_p,t)$.
We also define the `backward' map $\alpha_0(\alpha,t)$ such that $\tau(\alpha_0(\alpha,t),0)$ is the location at $t=0$ of the material point that at time $t$ is located at $\tau(\alpha,t)$.
It follows that $\alpha_m$ and $\alpha_0$ are one-to-one and inverses.

A formula for $\MCAL{S}(\alpha,t)$ that gives the tension in terms of the initial state of the membrane and the backwards map $\alpha_0(\alpha, t)$ is given by (4.14) in  \cite{HSB:2012}. In our notation, this formula takes the form
\begin{align}\label{tension_modulus_eqn_alpha_t}
\MCAL{S}(\alpha,t)
=\frac{s_{\alpha}(t)}{s_{\alpha}(0)\alpha_0'(\alpha,t)}
\bigpa{
1+\MCAL{S}(\alpha_0(\alpha,0),0)
}
-1\mbox{,}
\end{align}
where $\alpha_0'(\alpha,t)=\PAD{\alpha}{\alpha_0}(\alpha,t)$ and 
we have made use of the fact that $s_{\alpha}(\alpha,t)=s_{\alpha}(t)$ is spatially independent.

The formula for the membrane tension therefore requires an equation for the backward map $\alpha_0(\alpha,t)$.
First, note that, by definition of $\alpha_m$ and $\alpha_p$, the condition for the motion of a material particle becomes
\begin{align}
\frac{d}{dt}\tau(\alpha_m(\alpha_p,t),t)=u \mbox{,}
\end{align}
that is,
\begin{align}\label{complex_vel_eqn_tension_mod_1}
\PAD{t}{\tau}\Big\rvert_{\alpha}
+\PAD{\alpha}{\tau}\PAD{t}{\alpha_m}\Big\rvert_{\alpha_p}
=u \mbox{,}
\end{align}
at $\alpha=\alpha_m(\alpha_p,t)$.
An expression for $\PAD{t}{\tau}\Big\rvert_{\alpha}$ is given by (\ref{vel_eqn_9a_1}),
and substituting this into (\ref{complex_vel_eqn_tension_mod_1}) yields the evolution equation for the forward map $\alpha_m(\alpha_p,t)$
\begin{align}\label{complex_vel_eqn_alpha_m_mod_1}
\PAD{t}{\alpha_m}\Big\rvert_{\alpha_p}
=\frac{1}{\tau_{\alpha}}\bigmpa{
u -(u_n i e^{i\theta}+\phi_s e^{i\theta})
}
\end{align}
at $\alpha=\alpha_m(\alpha_p,t)$.

The evolution of the backward map $\alpha_0(\alpha,t)$ is obtained by noting that $\alpha_m$ and $\alpha_0$ are inverses, so that differentiation of the identity $\alpha=\alpha_m(\alpha_0(\alpha,t),t)$ with respect to time keeping $\alpha$ fixed implies
\begin{align}\label{complex_vel_eqn_alpha_m_mod_2}
\PAD{t}{\alpha_m}\Big\rvert_{\alpha_p}
+\PAD{\alpha_m}{\alpha_p}\PAD{t}{\alpha_0}\Big\rvert_{\alpha}
=0\mbox{,}
\end{align}
where we have set $\alpha_p=\alpha_0(\alpha,t)$ in the first two derivatives.
Differentiation of the same identity with respect to $\alpha$ keeping $t$ fixed gives
\begin{align}\label{alpha_m_alpha_p_rel_eqn}
\PAD{\alpha_p}{\alpha_m}
=\Bigpa{
\PAD{\alpha}{\alpha_0}
}^{-1}\mbox{.}
\end{align}
Eliminating $\alpha_m$ in favor of $\alpha_0$ in (\ref{complex_vel_eqn_alpha_m_mod_1}), (\ref{complex_vel_eqn_alpha_m_mod_2}) and (\ref{alpha_m_alpha_p_rel_eqn}) gives the initial value problem for the backward map,
\begin{align}\label{d_alpha_0_dt_eqn_12a_1}
\PAD{t}{\alpha_0}\Big\rvert_{\alpha}
&=\PAD{\alpha}{\alpha_0}\frac{1}{\tau_{\alpha}}\bigmpa{
u_n i e^{i\theta}+\phi_s e^{i\theta}
- u
}
\nonumber\\
&=\PAD{\alpha}{\alpha_0}\frac{1}{\tau_{\alpha}}
\bigmpa{
(\phi_s-u_s)e^{i\theta}
}
\mbox{,} \mbox{~~with~~} \alpha_0(\alpha,0)=\alpha,
\end{align}
which together with (\ref{tension_modulus_eqn_alpha_t}) is the main result of this subsection.

In summary, the main equations that govern capsule evolution are given by (\ref{velinteqn})-(\ref{finalforminteqn}), (\ref{S_ALPHA_EVOLVINTEQN})-(\ref{vel_0_eqn_41}), (\ref{tension_modulus_eqn_alpha_t}), and (\ref{d_alpha_0_dt_eqn_12a_1}).
A spectrally accurate numerical method for solving this system of equations is presented in the next section.

\section{Numerical Method} \label{sec:numerical_method}
We construct a continuous in time, discrete in space numerical scheme for the evolution equations by providing rules to approximate the spatial derivatives and singular integrals.

The spatial variable $\alpha$ is discretized  by $\alpha_j=j h$, where $j=-\frac{N}{2}+1,\cdots,\frac{N}{2}$ with $N$ assumed to be even, so that $\alpha$ is defined on a uniform grid of mesh size $h=\frac{2\pi}{N}$.
Define a discrete Fourier transform of a periodic function $f$ whose values are known at $\alpha_j$ by
\begin{equation}\label{APPFT}
\hat{f}_k=\frac{1}{N} \displaystyle\sum_{j=-\frac{N}{2}+1}^{\frac{N}{2}}
f(\alpha_j)e^{-i k\alpha_j}\mbox{, for } k=-\frac{N}{2}+1,\cdots,\frac{N}{2}\mbox{,}
\end{equation}
with the inverse transform given by
\begin{equation}\label{APPIFT}
f(\alpha_j)=\displaystyle\sum_{k=-\frac{N}{2}+1}^{\frac{N}{2}}
\hat{f}_k e^{ik\alpha_j}\mbox{, for } j=-\frac{N}{2}+1,\cdots,\frac{N}{2}\mbox{.}
\end{equation}

Spatial derivatives of $f$ are computed using a pseudo-spectral approximation, which is denoted by $S_h f$ and defined by
\begin{eqnarray}\label{S_h_defn}
\widehat{(S_h f)}_k&=&ik\hat{f}_k\mbox{, for } k=-\frac{N}{2}+1,\cdots,\frac{N}{2}-1\mbox{,}\\
&=& 0 \mbox{, for } k=\frac{N}{2}.
\end{eqnarray}
Due to the asymmetry of the discrete Fourier transform, we zero out the $k=\frac{N}{2}$ mode of $(\widehat{S_h f})_k$.
This will be important for stability.

Sometimes we need to apply numerical filtering to the discrete solution.
Indeed, this will be critical for the stability of our method.
Numerical filtering is defined in Fourier space following \cite{BHL:1996} as
\begin{equation}\label{Num_Filtering_defn}
\widehat{(f^p)}_k=\rho(kh)\hat{f}_k\mbox{,}
\end{equation}
where $\rho$ is a cutoff function with the following properties:
\begin{align}
\rho(-x)&=\rho(x)\mbox{ ; }\rho(x)\geq 0\mbox{,}\tag{i} \\
\rho(x)&\in C^r\mbox{ ; }r>2\mbox{,}\tag{ii} \\
\rho(\pm \pi)&=\rho'(\pm \pi)=0\mbox{,}\tag{iii} \\
\rho(x)&=1 \mbox{ for }\abs{x}\leq \mu \pi\tag{iv}\mbox{, }
0<\mu <1\mbox{.}
\end{align}
Condition (iv) ensures the spectral accuracy of the filtering.
We also define a filtered derivative operator $D_h$ by
\begin{align}
(\widehat{D_h f})_k = ik\rho(kh)\hat{f}_k
\mbox{, for } k=-\frac{N}{2}+1,\cdots,\frac{N}{2}
\mbox{.}
\end{align}

We denote by $\theta(\alpha_j)$, $\omega(\alpha_j)$, $\zeta(\alpha_j)$, etc. the exact continuous solution evaluated at grid points $\alpha_j$, and by $\theta_j$, $\omega_j$, $\zeta_j$, etc. the discrete approximation.
Also, we use $\sigma(t)$ to denote the numerical approximation of $s_{\alpha}(t)$.\\

\noindent\underline{\textit{Discrete equations for velocity $u$ and density $\omega$}}.\\

The interface contour $\gamma$ is parameterized by $\tau(\alpha,t)$.
If we set $\tau=\tau(\alpha)$ (omitting the time dependence) and
$\zeta=\tau(\alpha')$, then the integral equation (\ref{finalforminteqn}) for $\omega$ becomes
\begin{align}\label{SLFORM2}
\omega(\alpha)&+ \beta \intdef{-\pi}{\pi} F(\alpha, \alpha') \ d\alpha'
=g(\alpha)
\mbox{,}
\end{align}
where
\begin{align}\label{SLFORM3}
F(\alpha, \alpha') &= \frac{\omega(\alpha')}{2 \pi i} \left(2 i \  \mbox{Im} \left[ \frac{\tau_\alpha(\alpha')}{\tau(\alpha')-\tau(\alpha)} \right]
 \right)
\nonumber \\
&+ \frac{\OVL{\omega(\alpha')}}{2 \pi i} \left(
\frac{\tau_{\alpha}(\alpha')}{\OVL{\tau(\alpha')}-\OVL{\tau(\alpha)}}
-\frac{(\tau(\alpha')-\tau(\alpha))}{(\OVL{\tau(\alpha')}-\OVL{\tau(\alpha)})^2}
\OVL{\tau_{\alpha}(\alpha')} \right)
\mbox{,}
\end{align}
and where we have written $\omega(\alpha)$ for $\omega(\tau,t)$ and $\omega(\alpha')$ for $\omega(\zeta,t)$.
The function $g(\alpha)$ represents the right hand side of  (\ref{finalforminteqn}) and can be written in terms of $\theta$, $s_{\alpha}$ and $\tau$ as
\begin{align} \label{g_defn}
    g(\alpha)=-\frac{\chi}{2} \Bigpa{\MCAL{S}(\alpha)e^{i\theta(\alpha)} -\frac{\kappa_B\theta_{\alpha\alpha}(\alpha)}{s_{\alpha}^2}
ie^{i\theta(\alpha)}} -\beta(B-iQ)\OVL{\tau}-2\beta H(t).
\end{align}
Although the apparent singularity $\tau(\alpha')=\tau(\alpha)$ is removable, we shall nonetheless discretize (\ref{SLFORM2}) using alternate point trapezoidal rule \cite{HLK:1991},
\begin{equation}\label{alternate_trape_summation_eqn_62}
\intdef{-\pi}{\pi}
f(\alpha,\alpha')d\alpha'
\approx
\DPS\sum_{\substack{j=-\frac{N}{2}+1\\(j-i)\mbox{ odd}}}^{\frac{N}{2}}
f(\alpha_i,\alpha_j) (2h)\mbox{.}
\end{equation}
This quadrature rule is normally used for singular integrals, but for convenience  we shall also apply it here for smooth kernels since it precludes the need for separate, analytical kernel evaluations at $\alpha=\alpha'$. 

The real and imaginary parts of  (\ref{SLFORM2}) form a system of Fredholm integral equations for $\omega_1=\mbox{Re}(\omega)$ and $\omega_2=\mbox{Im}(\omega)$.
To write the corresponding discrete system, first decompose
\begin{align} \label{omega_til}
\bm{\omega}_i=\bm{\tilde{\omega}}_i + \MBF{g}_i,
\end{align}
where
\begin{align} \label{g_omega_omegatil_def}
\bm{\omega}_i
=\begin{bmatrix}
\omega_1\\
\omega_2
\end{bmatrix}_i
\mbox{,     }
\bm{\tilde{\omega}}_i
=\begin{bmatrix}
\tilde{\omega}_1\\
\tilde{\omega}_2
\end{bmatrix}_i
\mbox{,     }
\bm{g}_i
=\begin{bmatrix}
g_1 \\
g_2
\end{bmatrix}_i
\mbox{,}
\end{align}
with  $g_1=\mbox{Re}(g)$ and $g_2=\mbox{Im}(g)$  (cf. (\ref{g_defn})).
Then form the   discrete system as
\begin{align}\label{I_plus_beta_K_matrix_eqn_2_1}
(\MBF{I}+\beta\MBF{K})\bm{\tilde{\omega}}_i
=-\beta \MBF{K} \MBF{g}_i^p \mbox{.}
\end{align}
Here
$\MBF{K}$ is the discrete operator
\begin{align}\label{K_omega_matrix_eqn_3_0}
\MBF{K}\bm{\omega}_i
&=\DPS\sum_{\substack{j=-\frac{N}{2}+1\\(j-i)\mbox{ odd}}}^{\frac{N}{2}}
\begin{pmatrix}
\bigpa{K_R^{(1)}}_{i,j}+ \bigpa{ K_R^{(2)}}_{i,j} & \bigpa{K_I^{(2)}}_{i,j} \\
\bigpa{K_I^{(2)}}_{i,j} & \bigpa{K_R^{(1)}}_{i,j}-\bigpa{K_R^{(2)}}_{i,j}
\end{pmatrix}
\begin{bmatrix}
\omega_1 \\
\omega_2
\end{bmatrix}_j
(2h)
\\
&=\DPS\sum_{\substack{j=-\frac{N}{2}+1\\(j-i)\mbox{ odd}}}^{\frac{N}{2}}
\bigpa{\MBF{K}_M}_{i,j}\bm{\omega}_j (2h) \label{system}
\mbox{,}
\end{align}
where
\begin{align}\label{K_omega_disc_kernel_eqn_3_1}
\bigpa{K_R^{(1)}}_{i,j}
&=\frac{1}{\pi}\mbox{Im}\Bigpa{
\frac{S_h\tau_j}{\tau_j-\tau_i}}
+\sigma
\mbox{,}
\\
\bigpa{K_R^{(2)}}_{i,j}
&=\mbox{Re}\Bigbra{
\frac{1}{2\pi i}
\Bigpa{
\frac{S_h\tau_j}{\OVL{\tau_j}-\OVL{\tau_i}}
-\frac{\tau_j-\tau_i}{(\OVL{\tau_j}-\OVL{\tau_i})^2}
\OVL{S_h\tau_j}
}
}\mbox{,}
\\
\bigpa{K_I^{(2)}}_{i,j}
&=\mbox{Im}\Bigbra{
\frac{1}{2\pi i}
\Bigpa{
\frac{S_h\tau_j}{\OVL{\tau_j}-\OVL{\tau_i}}
-\frac{\tau_j-\tau_i}{(\OVL{\tau_j}-\OVL{\tau_i})^2}
\OVL{S_h\tau_j}
}
}\mbox{,}
\end{align}
and $\bigpa{\MBF{K}_M}_{i,j}$ is the matrix kernel in (\ref{K_omega_matrix_eqn_3_0}). The discrete function
$g_i=g_{1i} + i g_{2i}$ is the discretization of (\ref{g_defn}):
\begin{align} \label{g_i_def}
  g_i=-\frac{\chi}{2} \Bigpa{\MCAL{S}_i e^{i\theta_i} -\frac{\kappa_B S_h^2\theta_i}{s_{\alpha}^2}
ie^{i\theta_i}}
-\beta(B-iQ)\OVL{\tau_i}-2\beta H(t).
\end{align}
We sometimes use a filtered $g^p_i$ in which the second derivative operator $S_h^2$ in 
(\ref{g_i_def}) is replaced by its filtered version $D_h^2$; see, for example, (\ref{I_plus_beta_K_matrix_eqn_2_1}). In  a slight abuse of notation, this (partially) filtered discrete function is denoted with a superscipt $p$.

The invertibility of (\ref{I_plus_beta_K_matrix_eqn_2_1}) is a consequence of Lemma \ref{I_plus_beta_K_matrix_lemma_6_1}.
There, it is shown that for sufficiently small $\beta$ and spatial step size $h$, (\ref{I_plus_beta_K_matrix_eqn_2_1}) is uniquely solvable for $\bm{\tilde{\omega}}_i$ by the method of successive approximations.
Details are deferred to Section 12. Note that $\bm{\tilde{\omega}}=0$ if $\beta=0$.



We next consider the velocity equation (\ref{velinteqn}).
To obtain a stable scheme, a careful treatment of the principal value integral is required.
We parameterize the contour by $\tau(\alpha)$ then isolate the most singular part by adding and subtracting the periodic Hilbert transform 
\begin{align}
\MCAL{H} \omega(\alpha)
=\frac{1}{2\pi}\mbox{P.V.}\intdef{-\pi}{\pi}
\omega(\alpha')
\cot\Bigpa{\frac{\alpha-\alpha'}{2}}
d\alpha'\mbox{,}
\end{align}
to obtain
\begin{align}\label{velintsubstractHilb_1}
u(\alpha)
=\MCAL{H} \omega(\alpha)
-\frac{1}{2\pi}\intdef{-\pi}{\pi} G(\alpha, \alpha') \ d\alpha'
+(Q+iB)\OVL{\tau(\alpha)}-\frac{iG}{2}\tau(\alpha)
\mbox{,}
\end{align}
where
\begin{align}\label{Gdef}
G(\alpha,\alpha')=&
\omega(\alpha')
\biggmpa{
2 \mbox{Re} \left( \frac{  \tau_{\alpha'}(\alpha')}{\tau(\alpha')-\tau(\alpha)} \right) 
+\cot\Bigpa{\frac{\alpha-\alpha'}{2}}}
\nonumber \\
&-\OVL{\omega(\alpha')}
\biggmpa{
\frac{\tau_{\alpha'}(\alpha')}{\OVL{\tau(\alpha')}-\OVL{\tau(\alpha)}}
-\frac{\tau(\alpha')-\tau(\alpha)}{(\OVL{\tau(\alpha')}-\OVL{\tau(\alpha)})^2}
\OVL{\tau_{\alpha'}(\alpha')}
}
\end{align}
It is easy to see that $G(\alpha, \alpha')$ is a smooth function of $\alpha$ and $\alpha'$.

The velocity equation (\ref{velintsubstractHilb_1}) is discretized using the alternate point trapezoidal rule as
\begin{align} \label{vel_decomp}
u_i&={\MCAL H}_h \omega_i + (u_R)_i,    
\end{align}
where  ${\MCAL H}_h$ is the discrete Hilbert transform defined by 
\begin{align} \label{discrete_Hilbert}
{\MCAL H}_h f_i=\frac{h}{\pi} \DPS\sum_{\substack{j=-\frac{N}{2}+1\\(j-i)\mbox{ odd}}}^{\frac{N}{2}}
f_j\cot \left( \frac{\alpha_i-\alpha_j}{2} \right),
\end{align}
and  $(u_R)_i$  is given by
\begin{align}\label{velocity_eqn_disc}
(u_R)_i  =\frac{h}{\pi} \DPS\sum_{\substack{j=-\frac{N}{2}+1\\(j-i)\mbox{ odd}}}^{\frac{N}{2}} \left\{ -\omega_j^p G_{ij}^{(1)}
+\OVL{\omega}_j^p G_{ij}^{(2)} \right\}
+(Q+iB)\OVL{\tau}_i-\frac{iG\tau_i}{2}\mbox{,}
\end{align}
in which
\begin{align}
G_{ij}^{(1)}=
2 \mbox{Re} \left( \frac{S_h\tau_j}{\tau_j-\tau_i} \right)
+\cot\Bigpa{\frac{\alpha_i-\alpha_j}{2}}\mbox{,}
\end{align}
and
\begin{align} \label{vel_kernel2}
G_{ij}^{(2)}=
\frac{S_h\tau_j}{\OVL{\tau_j}-\OVL{\tau_i}}
-\frac{\tau_j-\tau_i}{(\OVL{\tau_j}-\OVL{\tau_i})^2}
\OVL{S_h\tau_j}\mbox{.}
\end{align}
In (\ref{velocity_eqn_disc}), we use the filtered density defined by  $\omega^p_j=\tilde{\omega}_j+g^p_j$ 
in the  discretization $(u_R)_i$ of the regular integral
(see the comment following (\ref{g_i_def})),
but not in the discretization $\MCAL{H}_h\omega_i$ of the leading order singular integral.
This targeted application of filtering is found to be necessary 
to prove stability of our method.
In the discrete equations, $S_h\tau$ can be replaced by $\sigma e^{i\theta}$
(i.e., the actual application of the discrete derivative operator $S_h$ is not required here), but for convenience we will continue to use $S_h\tau$ to represent the discrete version of $\tau_{\alpha}$.

In our method, we also need the discrete normal and tangential velocities,
\begin{align} \label{norm_and_tang_vel}
(u_n)_i = \mbox{Im}\bigbra{u_i e^{-i\theta_i}}\mbox{,}~~
(u_s)_i = \mbox{Re}\bigbra{u_i e^{-i\theta_i}}\mbox{,}
\end{align}
which follow from (\ref{UComponentEqn}) with $n_i=ie^{i\theta_i}$.
Care must be made in the discretization of $u_i e^{-i\theta_i}$, for reasons which will become apparent below.
We first define the commutator
\begin{align}
\bigmpa{\MCAL{H}_h,\phi_i}(\psi_i)
=\MCAL{H}_h \bigpa{
\phi_i \psi_i
}
-\phi_i
\MCAL{H}_h\bigpa{\psi_i}
\mbox{.}
\end{align}
Then, using (\ref{vel_decomp}), we discretize
\begin{align}\label{num_approx_U_e_to_the_neg_i_theta}
u_i e^{-i\theta_i}&=\MCAL{H}_h(\omega_i e^{-i\theta_i})
-\bigmpa{\MCAL{H}_h,e^{-i\theta_i}}(\omega_i^p)
+(u_R)_i e^{-i\theta_i}\mbox{,}
\end{align}
applying the filter only in the argument of the commutator.
The discrete normal and tangential velocities can be written in a particularly simple form in the important special case of viscosity matched fluids $(\beta=0)$, which is now described.\\

\noindent\underline{\textit{Viscosity matched fluids $(\beta=0)$}}.\\

When $\beta=0$, the  nonlocal operator $\MBF{K}$ in  (\ref{I_plus_beta_K_matrix_eqn_2_1})  drops out, leading to a considerable simplification.
Taking $\beta=0$ and $\chi=\frac{1}{2}$ in (\ref{SLFORM2}) and  (\ref{g_i_def}),  we see that
\begin{align}\label{num_approx_omega_exp_i_theta}
\omega_i e^{-i\theta_i}
=-\frac{1}{4}\Bigpa{
\MCAL{S}_i-i\frac{\kappa_B}{\sigma^2}
S_h^2\theta_i
}\mbox{.}
\end{align}
Inserting this into the discrete Hilbert transform in  (\ref{num_approx_U_e_to_the_neg_i_theta}) and taking the imaginary part per (\ref{norm_and_tang_vel}) gives
\begin{align}\label{num_approx_u_n_i_decomp_1}
(u_n)_i&=\frac{\kappa_B}{4\sigma^2}\MCAL{H}_h(S_h^2\theta_i)
+\mbox{Im}\bigbra{
-\bigmpa{\MCAL{H}_h,e^{-i\theta_i}}(\omega_i^p)
+(u_R)_i e^{-i\theta_i}
}\mbox{.}
\end{align}
The significance of the decomposition (\ref{num_approx_U_e_to_the_neg_i_theta}) is now apparent:
by moving $e^{-i\theta_i}$ into the argument of discrete Hilbert transform, the leading order term of the normal velocity, namely $\frac{\kappa_B}{4\sigma^2}\MCAL{H}(S_h^2\theta_i)$, becomes linear in $\theta_i$ with a spatially constant coefficient that has the right sign to take advantage of parabolic smoothing.
This will be critical in energy estimates.
We similarly decompose the tangential velocity as
\begin{align}\label{num_approx_u_s_i_decomp_1}
(u_s)_i&=-\frac{1}{4}\MCAL{H}_h\bigpa{
\MCAL{S}_i}
+\mbox{Re}\bigbra{-[\MCAL{H}_h,e^{-i\theta_i}]\omega_i^p
+(u_R)_i e^{-i\theta_i}
}
\mbox{.}
\end{align}

It will later be  shown that the nonlocal operator $\MBF{K}$ in (\ref{I_plus_beta_K_matrix_eqn_2_1})   does not  affect the stability of the discretization.
Henceforth, we focus the analysis on the  special case of viscosity matched fluids, and later generalize to the full problem for nonzero $\beta$.\\

\noindent\underline{\textit{Discretization of evolution equations}}.\\

The semi-discrete (continuous in time, discrete in space) equations for $\theta,\sigma$ are
\begin{align}
\bigpa{\theta_t}_i&=\frac{1}{\sigma}\bigpa{
S_h (u_n)_i+(\phi_s)_i S_h \theta_i
}\mbox{,}\label{theta_i_time_der_eqn} \\
\sigma_t&=-\ip{u_n S_h\theta}_h\mbox{,}\label{sigma_i_time_der_eqn}
\end{align}
where
\begin{equation}
\ip{f}_h=\frac{1}{N}\DPS\sum_{j=-\frac{N}{2}+1}^{\frac{N}{2}} f_j\mbox{,}
\end{equation}
is the discrete mean computed using trapezoid rule.
In order to recover the interface location from $\theta_i$ and $\sigma$, we need to introduce the pseudo-spectral antiderivative operator defined in Fourier space on functions $f$ of mean zero by
\begin{align}\label{Int_h_defn}
\widehat{\bigpa{S_h^{-1} f}}_k
=
\begin{cases}
\frac{1}{ik}\hat{f}_k\quad &\mbox{for } k\neq 0\mbox{,} \\
0\quad &\mbox{for } k=0\mbox{.}
\end{cases}
\end{align}
Then the discretization of (\ref{X1_POS_INT_EQN}), (\ref{X2_POS_INT_EQN}) can be written
\begin{equation}\label{Z_Int_disc}
\tau_i=\tau_c+S_h^{-1}\bigpa{\sigma e^{i\theta}-\ip{\sigma e^{i\theta}}_h}_i\mbox{,}
\end{equation}
where $\tau_c$ is the zero (constant) Fourier mode of $\tau_i$. This is evolved from (\ref{vel_0_eqn_41}) as
\begin{align}\label{tau_c_time_der_hat_u_0}
\frac{d\tau_c}{dt}=\hat{v}_0=\ip{v}_h\mbox{,}
\end{align}
where $\hat{v}_0$ is the zero Fourier mode of the discrete velocity $v_i$.
Equation (\ref{PHI_S_ALPHA_T}) is discretized as
\begin{equation}\label{phi_s_disc}
\bigpa{\phi_s}_i=S_h^{-1}\bigpa{u_n S_h\theta-\ip{u_n S_h\theta}_h}_i\mbox{,}
\end{equation}
and the surface tension (\ref{tension_modulus_eqn_alpha_t}) as
\begin{align}\label{tension_modulus_disc_16_1}
\MCAL{S}_i=\frac{\sigma}{\sigma_0 D_h\alpha_{0 i}}
\bigpa{
1+\MCAL{S}_{0 i}
}
-1\mbox{,}
\end{align}
where $\MCAL{S}_{0 i}$ is the discrete initial tension, and $\sigma_0$ is the initial value of $s_{\alpha}$.
The semi-discrete equation for $\alpha_{0 i}$ is obtained from (\ref{d_alpha_0_dt_eqn_12a_1}) as
\begin{align}\label{alpha_0_t_disc_16_2}
(\alpha_{0 t})_i=\frac{D_h \alpha_{0 i}}{\sigma e^{i\theta_i}}
\bigpa{
(\phi_s-u_s)e^{i\theta}
}_i\mbox{.}
\end{align}
In summary, the principal equations for the discrete scheme are (\ref{I_plus_beta_K_matrix_eqn_2_1}), (\ref{vel_decomp}), 
(\ref{theta_i_time_der_eqn})-(\ref{sigma_i_time_der_eqn}),
(\ref{tau_c_time_der_hat_u_0}), and (\ref{alpha_0_t_disc_16_2}), and are the main result of this section.

\noindent\underline{\textit{Discretization for a drop interface}}.\\

The discretization for a (nonelastic) drop interface with zero bending stress and constant surface tension,  $\kappa_B=0$ and $\MCAL{S}_i=1$, is modified from the above.  In this case the stability is more delicate, since we can no longer take advantage of the stabilizing properties of the (high derivative) bending stress term. Thus, the numerical method requires more filtering. We reinterpret $g_i^p$, originally  defined in the comment following 
(\ref{g_i_def}), to be the fully filtered $g_i$. We also now filter the leading order term in the decomposition (\ref{vel_decomp}), so that
\begin{align} \label{vel_decomp_drop}
u_i&={\MCAL H}_h \omega_i^p + (u_R)_i,    
\end{align}
where $\omega_i^p=\tilde{\omega}_i+g_i^p$.
Furthermore, we replace each occurrence of $S_h \tau_i$ in the kernels of (\ref{K_omega_matrix_eqn_3_0}) and (\ref{velocity_eqn_disc}) with its filtered version $D_h \tau_i$, or equivalently by $(\sigma e^{i \theta_i})^p$.  Finally, we replace $(S_h u_n)_i$ in (\ref{theta_i_time_der_eqn}) with $(D_h u_n)_i$.  Other aspects of the discretization remain the same as for an elastic capsule.  

\noindent\underline{\textit{Numerical example}}.\\

An example numerical calculation is shown in Figure 2. We use the BI method of \cite{HSB:2012} for the elastic capsule computation.
Their method is similar, but not identical to, that described in this section.
In particular, the  algorithm analyzed here generalizes that of \cite{HSB:2012} to include nonzero interior viscosity and membrane bending stress.  The method for the drop computation is as described in this section, and essentially the same as in \cite{MCAK1:2001}, \cite{XBS:2013}. 
More extensive numerical results using the method for capsules will be presented in later work.
\begin{figure}[ht!] 
\begin{center}
\centerline{
   \qquad \qquad \hspace{-.16in}  \subfloat[strain flow]{\includegraphics[scale=1.0]{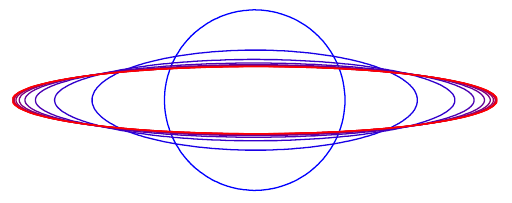}}%
    \qquad \qquad \qquad \hspace{.02in}
    \subfloat[][shear flow]{\includegraphics[scale=1.0]{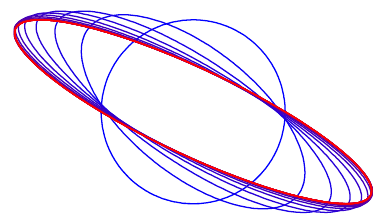}}}%
\includegraphics[scale=0.7]{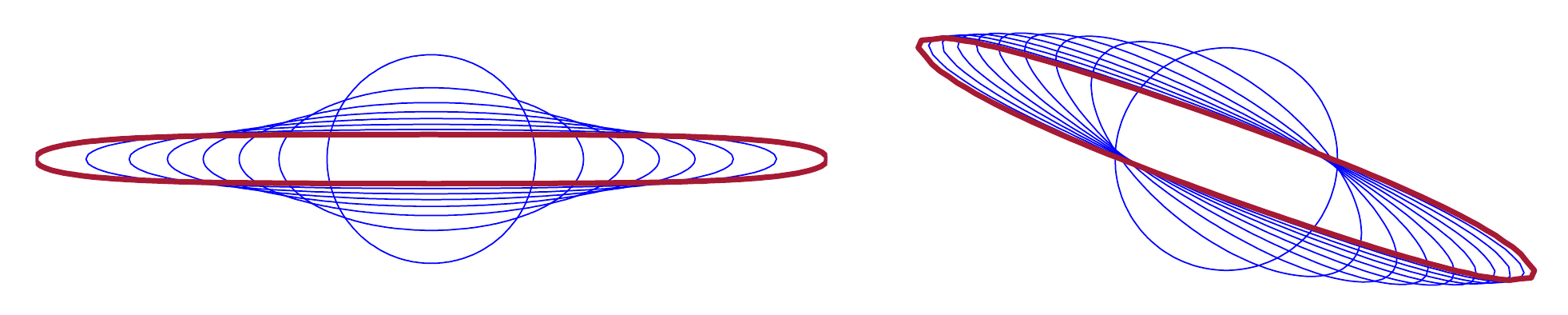} 
\caption{\label{fig-2}Time evolution of a Hookean elastic capsule (top)  and fluid drop (bottom) in (A) a pure strain flow with $Q=1$ and (B) a simple shear flow with $2B=G=-1$.
Other parameters are  $\MCAL{S}_0 = 1$,  $\kappa_B=\lambda=0$ for the elastic capsule, and $\MCAL{S}=1, ~\lambda=0.01$ for the fluid drop.  The profiles are shown at intervals of $\Lap t = 1.0$ (top left) and $\Lap t = 0.5$ (top right and bottom).}
\end{center}
\end{figure}

\section{Consistency: Preliminary Lemmas}
We define the Sobolev norm
\begin{equation}
\norm{f}_s=\biggpa{\DPS\sum_{k=-\infty}^{\infty} \bigpa{1+\abs{k}^2}^s \abs{\hat{f}_k}^2}^{\frac{1}{2}}\mbox{.}
\end{equation}

The first lemma, a version of which is presented in \cite{ET:1987}, gives the accuracy of the pseudo-spectral derivative. For completeness, a proof is given in the appendix.
\begin{lemma}\label{S_h_estimate}
Let $f(\alpha)$ be a periodic $C^{s+1}[-\pi,\pi]$ function. Then
\begin{equation}
\abs{S_h f(\alpha_i)-f_{\alpha}(\alpha_i)}\leq ch^{s-\frac{1}{2}}\norm{f}_{s+1}\mbox{.}
\end{equation}
The same inequality holds for $D_h$ in place of $S_h$.
\end{lemma}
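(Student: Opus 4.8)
The plan is to prove this by the standard aliasing analysis for pseudospectral differentiation, carried out entirely on the Fourier side. Write $f(\alpha)=\sum_{k\in\mathbb{Z}}\hat f_k e^{ik\alpha}$ with its \emph{true} Fourier coefficients; since $f\in C^{s+1}$ this series and its termwise derivative converge absolutely at the grid points, so $f_\alpha(\alpha_i)=\sum_{k\in\mathbb{Z}}ik\,\hat f_k e^{ik\alpha_i}$. The first step is to record the aliasing identity linking the discrete Fourier coefficients of $\{f(\alpha_j)\}$, which I will call $\hat f_k^{(N)}$ and which are defined by (\ref{APPFT}), to the true ones: since $e^{i(k+mN)\alpha_j}=e^{ik\alpha_j}$ for every integer $m$ (because $\alpha_j=jh$ and $Nh=2\pi$), orthogonality of the discrete exponentials gives $\hat f_k^{(N)}=\sum_{m\in\mathbb{Z}}\hat f_{k+mN}$ for $k=-\tfrac N2+1,\dots,\tfrac N2$.

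Next I would substitute this into the definition (\ref{S_h_defn}) of $S_h$ and, separately, regroup the exact series for $f_\alpha(\alpha_i)$ according to residue classes modulo $N$, again using $e^{i(k+mN)\alpha_i}=e^{ik\alpha_i}$. The mode $k+mN$ of $f$ then contributes $i(k+mN)\hat f_{k+mN}e^{ik\alpha_i}$ to $f_\alpha(\alpha_i)$ but only $ik\,\hat f_{k+mN}e^{ik\alpha_i}$ to $S_hf(\alpha_i)$, and nothing at all when the representative is $k=\tfrac N2$ since that mode is zeroed in (\ref{S_h_defn}). Subtracting, one finds that each integer $j$ with $|j|\ge\tfrac N2$ contributes to $S_hf(\alpha_i)-f_\alpha(\alpha_i)$ a single term of modulus at most $2|j|\,|\hat f_j|$ (for $j=k+mN$ with $|k|<\tfrac N2$ and $m\neq0$ the multiplier is $|mN|=|j-k|\le 2|j|$; for the residue class of $\tfrac N2$ it is $|j|$), while every $j$ with $|j|<\tfrac N2$ drops out exactly. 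Hence $|S_hf(\alpha_i)-f_\alpha(\alpha_i)|\le 2\sum_{|j|\ge N/2}|j|\,|\hat f_j|$, uniformly in $i$.

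It then remains to estimate this tail. By the Cauchy--Schwarz inequality against the weight $(1+|j|^2)^{s+1}$, the right-hand side is at most $2\big(\sum_{|j|\ge N/2}|j|^2(1+|j|^2)^{-(s+1)}\big)^{1/2}\norm{f}_{s+1}$, and a comparison with an integral gives $\sum_{|j|\ge N/2}|j|^2(1+|j|^2)^{-(s+1)}\le\sum_{|j|\ge N/2}|j|^{-2s}\le c\,(N/2)^{1-2s}=c'\,h^{2s-1}$ for $s>\tfrac12$, since $N=2\pi/h$. Taking square roots yields the claimed bound $ch^{s-1/2}\norm{f}_{s+1}$. For $D_h$ the argument is the same after two remarks: $\rho$ is continuous on $[-\pi,\pi]$, hence bounded, so the interior multiplier $ik\rho(kh)$ is still $O(|j|)$ on the aliased modes; and by property (iv) one has $\rho(kh)=1$ exactly for $|k|\le\mu\tfrac N2$, while property (iii) forces $\rho(\pm\pi)=0$, so the $k=\tfrac N2$ mode is again annihilated. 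Consequently all modes contributing to the error lie in $|j|\ge\mu\tfrac N2$, and the same Cauchy--Schwarz and integral estimate applies, now with a constant depending on $\mu$ and $\sup_{[-\pi,\pi]}|\rho|$.

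The only genuine obstacle is the bookkeeping in the middle step: one must check that the residue-class decomposition assigns every integer to exactly one representative in $\{-\tfrac N2+1,\dots,\tfrac N2\}$, that the modes with $|j|<\tfrac N2$ (apart from the zeroed $j=\tfrac N2$) cancel precisely, and that the constant relating $|j-k|$, $N$, and $|j|$ is uniform over all aliased modes. Beyond this there is no analytic subtlety except the convergence of the tail sum $\sum|j|^{-2s}$, which requires $s>\tfrac12$; this is exactly the source of the half-derivative loss $h^{s-1/2}$ rather than the naive $h^{s}$.
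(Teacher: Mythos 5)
Your proposal is correct and follows essentially the same route as the paper's proof in Appendix A: the aliasing identity $\hat f_k^{(N)}=\sum_m \hat f^e_{k+mN}$, a Fourier-side comparison of the multipliers, and a Cauchy--Schwarz estimate against the $H^{s+1}$ weight together with the tail bound $\sum_{|k|\ge N/2}|k|^{-2s}\le cN^{1-2s}$. The only cosmetic difference is that you fold the aliasing and truncation contributions into a single tail sum $2\sum_{|j|\ge N/2}|j|\,|\hat f_j|$, whereas the paper bounds the two pieces separately before arriving at the same estimate.
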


Similarly, for the pseudo-spectral anti-derivative operator we have
\begin{lemma}\label{pseudo_int_est_lemma}
Let $f$ be a periodic, zero$-$mean, $C^s[-\pi,\pi]$ function.
Then
\begin{equation}
\bigabs{\D_{\alpha}^{-1} f(\alpha_j)
-S_h^{-1} f(\alpha_j)
}
\leq c h^{s-\frac{1}{2}}\norm{f}_s\mbox{.}
\end{equation}
\end{lemma}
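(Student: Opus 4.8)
The plan is to prove this in exactly the same spirit as Lemma \ref{S_h_estimate} (whose proof is deferred to the appendix): expand both $\D_{\alpha}^{-1}f$ and $S_h^{-1}f$ in Fourier series and track the error through the aliasing formula relating the discrete Fourier coefficients (\ref{APPFT}) to the continuous ones. Write $c_l=\frac{1}{2\pi}\intdef{-\pi}{\pi}f(\alpha)e^{-il\alpha}\,d\alpha$ for the continuous coefficients, so that $\D_{\alpha}^{-1}f(\alpha_j)=\DPS\sum_{l\neq 0}\frac{c_l}{il}e^{il\alpha_j}$ by (\ref{antideriv}); since $f\in C^s$ with $s\geq 1$ this series converges absolutely (one has $\DPS\sum_l|l|^2|c_l|^2<\infty$, hence $\DPS\sum_l|c_l|<\infty$ by Cauchy--Schwarz), so the Poisson summation identity $\hat f_k=\DPS\sum_{m=-\infty}^{\infty}c_{k+mN}$ holds pointwise for $-\tfrac N2+1\leq k\leq\tfrac N2$, and regrouping the series for $\D_{\alpha}^{-1}f$ by residue class mod $N$ is legitimate.

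Using $e^{ik\alpha_j}=e^{i(k+mN)\alpha_j}$ on the grid and subtracting $S_h^{-1}f(\alpha_j)$ as given by (\ref{Int_h_defn}) and (\ref{APPIFT}), the $m=0$ terms cancel for each $k\neq 0$, and the error collapses to
\begin{align*}
\D_{\alpha}^{-1}f(\alpha_j)-S_h^{-1}f(\alpha_j)
=\DPS\sum_{m\neq 0}\frac{c_{mN}}{imN}
+\DPS\sum_{\substack{k=-N/2+1\\ k\neq 0}}^{N/2}\ \DPS\sum_{m\neq 0}c_{k+mN}\Bigpa{\frac{1}{i(k+mN)}-\frac{1}{ik}}e^{ik\alpha_j}.
\end{align*}
Here the first (aliased zero-mode) term comes entirely from $\D_{\alpha}^{-1}f$, since $S_h^{-1}$ discards the $k=0$ mode and $f$ has zero mean so $c_0=0$. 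I would then estimate the double sum as follows: for $m\neq 0$, $|k|\leq\tfrac N2$ one has $|k+mN|\geq\tfrac{|m|N}{2}$, hence $\bigabs{\tfrac1{i(k+mN)}-\tfrac1{ik}}\leq\tfrac{|m|N}{|k||k+mN|}\leq\tfrac{2}{|k|}$; writing $|c_l|\leq(1+|l|^2)^{-s/2}\bigpa{(1+|l|^2)^{s/2}|c_l|}$ and applying Cauchy--Schwarz first in $m$ (with $\DPS\sum_{m\neq 0}(1+|k+mN|^2)^{-s}\leq cN^{-2s}$, valid since $2s>1$) and then in $k$ (with $\DPS\sum_{k\neq 0}k^{-2}<\infty$) bounds the double sum by $cN^{-s}\norm{f}_s$, and the zero-mode term is even smaller; since $N^{-1}=\tfrac{h}{2\pi}$ this already gives the claimed $\leq ch^{s-\frac12}\norm{f}_s$ (in fact $O(h^s)$). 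Alternatively, one may regroup the two summands by the single index $l=k+mN$ (which ranges over $|l|\geq\tfrac N2$) and apply Cauchy--Schwarz once against $\DPS\sum_{|l|\geq N/2}(1+|l|^2)^{-s}\leq cN^{1-2s}$, reproducing the exponent $s-\tfrac12$ in exactly the form it appears in the proof of Lemma \ref{S_h_estimate}.

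The only genuinely non-routine point is the bookkeeping in the regrouping step, in particular correctly accounting for the $k=0$ residue class, which $\D_{\alpha}^{-1}$ retains through the indices $l=mN$, $m\neq 0$, but which $S_h^{-1}$ sets to zero. Everything else is the standard aliasing argument; the smoothing factor $\frac{1}{ik}$ built into the definitions of $\D_{\alpha}^{-1}$ and $S_h^{-1}$ makes the estimate easier than Lemma \ref{S_h_estimate}, not harder, so no new ideas are needed.
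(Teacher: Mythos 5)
Your proposal is correct, and it is the natural adaptation of Lemma \ref{S_h_estimate} that the paper has in mind, but your bookkeeping differs from the paper's in one substantive way. The paper's argument (the adaptation of the appendix proof of Lemma \ref{S_h_estimate}) splits the error into a truncation part, $\sum_{|k|>N/2}\frac{\hat f^e_k}{ik}e^{ik\alpha_j}$, and an aliasing part, $\sum_{|k|\le N/2,\,k\ne 0}\frac{1}{ik}\sum_{m\ne 0}\hat f^e_{k+mN}e^{ik\alpha_j}$, and estimates the two separately; the aliasing part dominates because the $1/|k|$ there is only bounded by $1$, which is exactly where the exponent $s-\tfrac12$ comes from. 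You instead pair the two contributions residue class by residue class, so that each term carries the difference $\tfrac{1}{i(k+mN)}-\tfrac{1}{ik}=\tfrac{-mN}{ik(k+mN)}$, and the surviving factor $2/|k|$ is square-summable in $k$; this cancellation buys you the sharper bound $O(h^s\|f\|_s)$, of which the stated $O(h^{s-1/2}\|f\|_s)$ is a weaker consequence. Your handling of the $k=0$ residue class (retained by $\partial_\alpha^{-1}$ through the indices $l=mN$, discarded by $S_h^{-1}$, harmless because $c_0=0$) is exactly the point one must not fumble, and you get it right. Both routes are valid; yours is marginally stronger, the paper's is marginally shorter and matches the form of the Lemma \ref{S_h_estimate} estimate verbatim.
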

\begin{proof}
The proof is a simple adaptation of that for Lemma \ref{S_h_estimate}.
\end{proof}

The next lemma is a well-known result on the accuracy of trapezoid rule for periodic functions.
\begin{lemma}\label{trapezoidal_rule_estimate}
Let $f(\alpha)$ be as in Lemma \ref{S_h_estimate}.
Then
\begin{equation}
\biggabs{\DPS\sum_{j=-\frac{N}{2}+1}^{\frac{N}{2}} f(\alpha_j)h-\intdef{-\pi}{\pi}f(\alpha)d\alpha}
\leq ch^{s+1}\norm{f}_{s+1}\mbox{.}
\end{equation}
\end{lemma}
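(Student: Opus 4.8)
The plan is to use the same Fourier--aliasing mechanism that underlies Lemmas \ref{S_h_estimate} and \ref{pseudo_int_est_lemma}: the trapezoid rule for a periodic function is exact on every Fourier mode, so its error is entirely due to aliasing of the high modes onto the constant mode. First I would write the continuous Fourier series $f(\alpha)=\DPS\sum_{k=-\infty}^{\infty}\hat f_k e^{ik\alpha}$, which converges pointwise by the $C^{s+1}$ hypothesis, so that $\intdef{-\pi}{\pi}f(\alpha)\,d\alpha=2\pi\hat f_0$. Inserting the series into the quadrature sum and interchanging the (absolutely convergent) sums, I would use the discrete orthogonality identity $\DPS\sum_{j=-\frac N2+1}^{\frac N2}e^{ik\alpha_j}=N$ when $N\mid k$ and $=0$ otherwise, which holds for a sum over any $N$ consecutive integers $j$. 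This gives $\DPS\sum_{j=-\frac N2+1}^{\frac N2}f(\alpha_j)h=2\pi\DPS\sum_{m=-\infty}^{\infty}\hat f_{mN}$, and hence the quadrature error is exactly $2\pi\DPS\sum_{m\neq 0}\hat f_{mN}$.

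Next I would estimate this tail. By the triangle inequality and the Cauchy--Schwarz inequality against the Sobolev weight $(1+\abs{mN}^2)^{(s+1)/2}$,
\[
\Bigabs{\DPS\sum_{m\neq 0}\hat f_{mN}}
\le
\Bigpa{\DPS\sum_{m\neq 0}(1+\abs{mN}^2)^{s+1}\abs{\hat f_{mN}}^2}^{1/2}
\Bigpa{\DPS\sum_{m\neq 0}(1+\abs{mN}^2)^{-(s+1)}}^{1/2}.
\]
The first factor is bounded by $\norm{f}_{s+1}$. For the second factor I would bound $(1+\abs{mN}^2)^{-(s+1)}\le \abs{mN}^{-2(s+1)}=N^{-2(s+1)}\abs{m}^{-2(s+1)}$ and sum the convergent series $\DPS\sum_{m\neq 0}\abs{m}^{-2(s+1)}$ (convergent since $s\ge 0$, so $2(s+1)\ge 2>1$), obtaining a bound $cN^{-(s+1)}$. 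Since $h=\frac{2\pi}{N}$, multiplying the two factors yields $\bigabs{\,\sum_{m\neq 0}\hat f_{mN}}\le c\,h^{s+1}\norm{f}_{s+1}$, which is the claimed estimate up to the harmless factor $2\pi$.

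There is no real obstacle here; the only points requiring care are bookkeeping ones. I would verify that the orthogonality identity remains valid on the asymmetric index set $j=-\frac N2+1,\dots,\frac N2$ used in (\ref{APPFT})--(\ref{APPIFT}) (it does, since the relevant sum runs over $N$ consecutive integers), and that the $C^{s+1}$ regularity is exactly what makes the Fourier coefficients decay fast enough both to justify the term-by-term manipulations and to keep the weighted tail series convergent. Aside from these, the argument is identical in spirit to the aliasing-error computations proving Lemmas \ref{S_h_estimate} and \ref{pseudo_int_est_lemma}.
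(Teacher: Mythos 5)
Your proof is correct, but it takes a different route from the paper: the paper's proof of this lemma is essentially a one-line appeal to the Euler--Maclaurin formula for periodic functions, with the details deferred to a reference, whereas you give a self-contained Fourier-space argument. Your key observation --- that the periodic trapezoid rule integrates every mode $e^{ik\alpha}$ exactly unless $N\mid k$, so the error is precisely $2\pi\sum_{m\neq 0}\hat f_{mN}$ --- is the standard ``aliasing onto the zero mode'' identity, and your Cauchy--Schwarz estimate against the weight $(1+\abs{mN}^2)^{(s+1)/2}$ correctly extracts the factor $N^{-(s+1)}=c\,h^{s+1}$ with the Sobolev norm $\norm{f}_{s+1}$ appearing exactly as in the statement. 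The Euler--Maclaurin route expresses the error through endpoint derivative corrections (which cancel by periodicity) plus a remainder controlled by $\norm{f^{(s+1)}}$, so it naturally produces a bound in terms of pointwise derivative norms; your Fourier route produces the Sobolev-norm bound directly and is more uniform with the aliasing computations used in the paper's proofs of Lemmas \ref{S_h_estimate} and \ref{pseudo_int_est_lemma}, which is arguably a better fit for the surrounding analysis. The bookkeeping points you flag (orthogonality over the asymmetric index set of $N$ consecutive integers, and convergence of $\sum_{m\neq0}\abs{m}^{-2(s+1)}$) are handled correctly.
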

\begin{proof}
This is derived from the Euler$-$Maclaurin formula.
For more details, see \cite{GHKHH:1991}.
\end{proof}

The next lemma provides a result on the accuracy of the filtering  operator.
\begin{lemma}\label{filter_lemma}
Let $f\in C^s[-\pi,\pi]$ be periodic, and let $f^p$ be as defined in (\ref{Num_Filtering_defn}) with conditions (i)$-$(iv).
Then
\begin{equation}
\abs{f^p(\alpha_i)-f(\alpha_i)}\leq ch^{s-\frac{1}{2}}\norm{f}_s\mbox{.}
\end{equation}
\begin{proof}
The proof is similar to that for Lemma \ref{S_h_estimate}, and is omitted here.
\end{proof}
\end{lemma}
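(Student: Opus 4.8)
The plan is to follow the proof of Lemma~\ref{S_h_estimate}, though the argument is in fact shorter because the ``interpolation'' pieces cancel exactly. From the definition (\ref{Num_Filtering_defn}) of the filter one has $f^p(\alpha_i)=\sum_{k=-N/2+1}^{N/2}\rho(kh)\hat f_k e^{ik\alpha_i}$, while the inversion formula (\ref{APPIFT}) gives $f(\alpha_i)=\sum_{k=-N/2+1}^{N/2}\hat f_k e^{ik\alpha_i}$ at the grid point $\alpha_i$; subtracting,
\begin{equation*}
f^p(\alpha_i)-f(\alpha_i)=\sum_{k=-N/2+1}^{N/2}\bigl(\rho(kh)-1\bigr)\hat f_k\,e^{ik\alpha_i}.
\end{equation*}
So the entire error is the ``filter-modification'' term, and this is all that needs to be estimated.

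By condition~(iv), $\rho(kh)=1$ whenever $|k|h\le\mu\pi$, i.e.\ whenever $|k|\le\mu N/2$, so only the high modes $\mu N/2<|k|\le N/2$ contribute. On $[-\pi,\pi]$ the cutoff $\rho$ is continuous by (i)--(ii), hence bounded, so $|\rho(kh)-1|\le c$ there. Expanding $f$ in its exact Fourier series and inserting it into (\ref{APPFT}) yields the aliasing identity $\hat f_k=\sum_{\ell\in\mathbb{Z}}\hat f_{k+\ell N}^{e}$ (the interchange of sums being legitimate since $\sum_m|\hat f_m^e|<\infty$ for $f\in C^s$, $s>\tfrac{1}{2}$), whence $|\hat f_k|\le\sum_{\ell}|\hat f_{k+\ell N}^{e}|$. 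Since $|k+\ell N|\ge N/2>\mu N/2$ for $|k|\le N/2$ and $\ell\ne0$, and since for each fixed $m$ only a bounded number of pairs $(k,\ell)$ satisfy $k+\ell N=m$ with $|k|\le N/2$, we get
\begin{equation*}
\bigabs{f^p(\alpha_i)-f(\alpha_i)}\le c\sum_{\mu N/2<|k|\le N/2}|\hat f_k|\le c\sum_{|m|>\mu N/2}|\hat f_m^{e}|.
\end{equation*}
A Cauchy--Schwarz split against the Sobolev weight then gives $\sum_{|m|>\mu N/2}|\hat f_m^e|\le\norm{f}_s\bigl(\sum_{|m|>\mu N/2}|m|^{-2s}\bigr)^{1/2}\le c\,(\mu N/2)^{\frac{1}{2}-s}\norm{f}_s$, the last sum converging precisely because $s>\tfrac{1}{2}$; since $\mu$ is a fixed constant and $h=2\pi/N$, this is $\le c\,h^{s-\frac{1}{2}}\norm{f}_s$, which is the claimed bound.

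There is no serious obstacle here; the two points that need a little care are the bookkeeping of the aliasing multiplicities (ensuring no exact mode $\hat f_m^e$ is over-counted when passing from the discrete $\hat f_k$) and the restriction $s>\tfrac{1}{2}$, which is exactly what makes the tail $\sum_{|m|>\mu N/2}|m|^{-2s}$ summable and the estimate nontrivial. It is worth emphasising that the flatness condition~(iv) is essential: without a region $|x|\le\mu\pi$ on which $\rho\equiv1$, the filter would perturb $O(1)$-frequency modes and the right-hand side would no longer be spectrally small. This is the same mechanism, and essentially the same computation, as in Lemmas~\ref{S_h_estimate} and~\ref{pseudo_int_est_lemma}.
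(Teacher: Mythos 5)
Your proof is correct and is exactly the argument the paper intends by ``similar to Lemma \ref{S_h_estimate}'': reduce to the high modes $|k|>\mu N/2$ via condition (iv), control the discrete coefficients there by the tail of the exact Fourier coefficients through the aliasing identity, and close with Cauchy--Schwarz against the Sobolev weight to get $ch^{s-\frac{1}{2}}\norm{f}_s$. No gaps.
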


\section{Consistency}
We calculate the error when the exact solution is substituted into the discrete system of equations.
Assume the exact solution is regular enough so that $\theta(\cdot,t)\in C^{m+1}[-\pi,\pi]$, $\omega(\cdot,t)\in C^{m-1}[-\pi,\pi]$ and $\alpha_0(\cdot,t)\in C^{m+1}[-\pi,\pi]$.
We also assume the initial tension $\MCAL{S}(\cdot,0)$ is in $C^m[-\pi,\pi]$.
The different levels of regularity for the different functions follows from an analysis of the continuous evolution equations.
We denote by $u_h(\alpha_i)$, $(u_n)_h(\alpha_i)$, $\omega_h(\alpha_i)$, $(\phi_s)_h(\alpha_i)$, etc.
quantities that are evaluated by substituting the exact solution $\theta(\cdot,t)$, $s_{\alpha}(t)$, $\alpha_0(\cdot,t)$ into the discrete equations.
We make repeated use of the estimate
\begin{align}\label{tau_h_alpha_i_formula_1}
\tau_h(\alpha_i)&=\tau_c+S_h^{-1} \bigpa{s_{\alpha} e^{i\theta}-\ip{s_{\alpha} e^{i\theta}}_h}(\alpha_i)=\tau(\alpha_i)+\mbox{O}(h^{m+\frac{1}{2}})\mbox{,}
\end{align}
which follows from (\ref{Z_Int_disc}), Lemma \ref{pseudo_int_est_lemma}, and the assumption on the regularity of the exact solution.
\newline
\newline
\underline{\textit{Consistency of $\omega$ equation}}.
\newline
\newline
We first assess the smoothness of the integrand $F(\alpha, \alpha')$ in the continuous equation for $\omega$, (\ref{SLFORM2}).
The apparent singularity in $F(\alpha,\alpha')$ is removable, and
\begin{equation}
\DPS\lim_{\alpha'\RA\alpha} F(\alpha,\alpha')=i\Bigpa{\omega(\alpha)\kappa(\alpha)s_{\alpha}
+\frac{\OVL{\omega}(\alpha)\kappa(\alpha)\tau_{\alpha}^2(\alpha)}{s_{\alpha}}
}\mbox{.}
\end{equation}
Recalling that $\omega(\cdot)\in C^{m-1}$, $\tau_{\alpha}(\cdot)=s_{\alpha}e^{i\theta(\cdot)}\in C^{m+1}$, and that $s_{\alpha}$ is bounded away from zero, it follows that
\begin{align}\label{F_alpha_cdot_regularity}
F(\alpha,\cdot)\in C^{m-1}\mbox{.}
\end{align}

The truncation error of the discrete operator $\MBF{K} \bm{\omega}=\MBF{K} (\tilde{\bm{\omega}} + \bm{g})$ in (\ref{K_omega_matrix_eqn_3_0}) is equivalent to the truncation error of the
alternate point trapezoidal rule approximation
of $\int_{-\pi}^\pi F(\alpha, \alpha') \ d \alpha'$  in (\ref{SLFORM2}), which is its complex counterpart. We therefore consider the alternate point sum $ \sum F_h(\alpha_i, \alpha_j)2h$, where the subscript $h$ denotes evaluation of $F$ using $\tau=\tau_h$, $\partial_\alpha=S_h$, and the exact $\theta$, $s_\alpha$, and   $\omega$.
By (\ref{tau_h_alpha_i_formula_1}), we can replace $\tau_h$ in this sum by $\tau$ incurring an order $\mbox{O}(h^{m+\frac{1}{2}})\bigmpa{\DPS\min_j \bigpa{\tau(\alpha_j)-\tau(\alpha_i)} }^{-1}=\mbox{O}(h^{m-\frac{1}{2}})$ error.
There is no error in $S_h\tau_h$ since the exact solution $s_{\alpha}e^{i\theta(\alpha_i)}$ is substituted for this term.
The standard trapezoid rule discretization therefore satisfies
\begin{align}
\DPS\sum_{j=-\frac{N}{2}+1}^{\frac{N}{2}} F_h(\alpha_i,\alpha_j)h
=\DPS\sum_{j=-\frac{N}{2}+1}^{\frac{N}{2}} F(\alpha_i,\alpha_j)h
+\mbox{O}(h^{m-\frac{1}{2}})\mbox{,}
\end{align}
where we have used the above remarks to replace $F_h$ with $F$.  
Then by the error estimate for trapezoidal rule integration (Lemma \ref{trapezoidal_rule_estimate}), the truncation error is bounded as
\begin{align}\label{J_h_F_trapezoid_err_est}
\left| \DPS\sum_{\substack{j=-\frac{N}{2}+1}}^{\frac{N}{2}}
F_h(\alpha_i,\alpha_j)h
-\intdef{-\pi}{\pi} F(\alpha_i,\alpha') d\alpha' \right|
\leq c h^{m-1}\norm{F(\alpha_i,\cdot)}_{m-1}.
\end{align}
A standard argument \cite{BHL:1996}, \cite{HLK:1991} shows that the truncation error for the alternate point trapezoidal rule quadrature of $F$ 
is the same as for 
trapezoidal rule, i.e., 
\begin{align}\label{J_h_err_i_even_odd}
\DPS\sum_{\substack{j=-\frac{N}{2}+1\\(j-i){\rm\ odd}}}^{\frac{N}{2}}
F_h(\alpha_i,\alpha_j)2h
-\intdef{-\pi}{\pi} F(\alpha_i,\alpha') d\alpha'
=\mbox{O}(h^{m-1})\mbox{.}
\end{align}
Error estimates for $\bm{g}_i$ in   (\ref{omega_til}), (\ref{I_plus_beta_K_matrix_eqn_2_1})  are obtained using Lemma \ref{S_h_estimate}.
For example, 
\begin{equation}
\frac{S_h^2\theta(\alpha_i)}{s_{\alpha}^2}
-\frac{\theta_{\alpha\alpha}(\alpha_i)}{s_{\alpha}^2}
= \mbox{O}(h^{m-\frac{3}{2}})\mbox{,}
\end{equation}
(cf. (\ref{g_i_def})) which is the dominant source of truncation error.
It follows that
\begin{align} \label{om_eqn_trunc_error}
\left( \bm{I}+ \beta \MBF{K}_h \right) \bm{\omega}(\alpha_i) - \bm{g}_h(\alpha_i)
=\mbox{O}(h^{m-\frac{3}{2}})\mbox{,}
\end{align}
and the consistency of (\ref{I_plus_beta_K_matrix_eqn_2_1}) results from substituting for $\bm \omega$ using (\ref{omega_til}) and noting that $ \MBF{K}_h  (\bm{g}_h^p(\alpha_i) -\bm{g}(\alpha_i))=O(h^{m-\frac{3}{2}})$.

%

%
We also need an estimate on $\bm{\omega}_h$, which is the solution of  
\begin{equation} \label{omh_eqn}
(\bm{I}+ \beta \MBF{K}_h) (\bm{\omega}_h)_i=\bm{g}_h(\alpha_i).
\end{equation}
Assume that $(\bm{\omega}_h)_i=\bm{\omega}(\alpha_i) + h^q \bm{r}_i$
for $\bm{r}_i$ not identically zero, and substitute into
(\ref{omh_eqn}) to obtain   $ (\bm{I}+ \beta \MBF{K}_h) \bm{\omega}(\alpha_i)-\bm{g}_h(\alpha_i) =
-h^q  (\bm{I}+ \beta \MBF{K}_h) \bm{r}_i$. In Lemma  \ref{I_plus_beta_K_matrix_lemma_6_1} below, it is shown that the right-hand-side of this equation is nonzero for $\beta$  sufficiently small.
It follows from (\ref{om_eqn_trunc_error}) that $q=m-3/2$, i.e.,
\begin{equation} \label{omh_est}
(\bm{\omega}_h)_i=\bm{\omega}(\alpha_i)+O(h^{m-\frac{3}{2}}).
\end{equation}

\noindent
\underline{\textit{Consistency of velocity}}.
\newline
\newline
We first consider the smoothness of the integrand $G(\alpha,\alpha')$ in (\ref{velintsubstractHilb_1}).
Note that
\begin{equation}
\DPS\lim_{\alpha'\RA\alpha}G(\alpha,\alpha')
=-\omega(\alpha)\mbox{Re}
\Bigpa{
\frac{\tau_{\alpha\alpha}}{\tau_{\alpha}}
}
+i\OVL{\omega(\alpha)}\kappa(\alpha)
\frac{\tau_{\alpha}^2(\alpha)}{s_{\alpha}}\mbox{,}
\end{equation}
and it follows that $G(\alpha,\cdot)\in C^{m-1}$.
Now, let $G_h(\alpha_i, \alpha_j)$ denote the discrete integrand in (\ref{velocity_eqn_disc}) but with $\tau_j$ replaced by $\tau_h(\alpha_j)$, $\omega^p_j$ by   $\omega_h^p(\alpha_j)$, etc. 
Using the same argument as that which led to (\ref{J_h_err_i_even_odd}), we deduce 
\begin{equation}
\DPS\sum_{\substack{j=-\frac{N}{2}+1\\(j-i){\rm\ odd}}}^{\frac{N}{2}}
G_h(\alpha_i,\alpha_j)2h
-\intdef{-\pi}{\pi} G(\alpha_i,\alpha') \ d\alpha'
=O(h^{m-\frac{3}{2}})
\mbox{,}
\end{equation}
in which the dominant $O(h^{m-\frac{3}{2}})$ contribution to the error comes from replacing $\omega_h^p(\alpha_i)$ by $\omega(\alpha_i)$, invoking  Lemma \ref{filter_lemma} and (\ref{omh_est}). This gives the truncation error of $(u_R)_i$ in (\ref{vel_decomp}). We next consider the discrete Hilbert transform in (\ref{vel_decomp}).
It is shown in \cite{BHL:1996} that
\begin{align}
\MCAL{H}_h \omega(\alpha_i)
-\frac{1}{2\pi}
\mbox{P.V.}\intdef{-\pi}{\pi}
\omega(\alpha')\cot\Bigpa{\frac{\alpha-\alpha'}{2}}
d\alpha'
=\mbox{O}(h^{m-2})\mbox{.}
\end{align}
This is a special case of a result proven in Section 2 of \cite{BHL:1996}, where it is shown that the order of accuracy of the discrete Hilbert transform is related to the regularity of $\omega_{\alpha}(\cdot)$, which here is $C^{m-2}$.
It follows that
\begin{align}
u_h(\alpha_i)
&=\MCAL{H}_h \omega(\alpha_i)
-\frac{1}{2\pi}
\DPS\sum_{\substack{j=-\frac{N}{2}+1\\(j-i){\rm\ odd}}}^{\frac{N}{2}}
G_h(\alpha_i,\alpha_j)2h
+(Q+iB)\tau_h(\alpha_i)\nonumber \\
&-\frac{iG\tau_h(\alpha_i)}{2}=u(\alpha_i)+\mbox{O}(h^{m-2})
\mbox{,}
\end{align}
which shows the consistency of the velocity discretization. 
From this it is easy to see that
\begin{align}
(u_s)_h(\alpha_i)&=u_s (\alpha_i)+\mbox{O}(h^{m-2}),
\label{tangent_h_err_order} \\
(u_n)_h(\alpha_i)&=u_n (\alpha_i)+\mbox{O}(h^{m-2}),  \label{normal_h_err_order} \\
S_h(u_n)_h(\alpha_i)&=
(u_n)_{\alpha}(\alpha_i)+\mbox{O}(h^{m-3})
\mbox{.}\label{normal_tangent_h_err_order}
\end{align}
In addition, from (\ref{phi_s_disc}),
\begin{align}\label{phi_s_err_eqn}
(\phi_s)_h(\alpha_i)&=S_h^{-1}\bigpa{
(u_n)_h S_h\theta -\ip{(u_n)_h S_h\theta}_h
}(\alpha_i)=\phi_s(\alpha_i)+\mbox{O}(h^{m-2})\mbox{,}
\end{align}
where the latter equality follows from 
(\ref{normal_h_err_order}). 
Combined, the above results show that the truncation errors for the $\theta,s_{\alpha},\alpha_0$ evolution equations (\ref{theta_i_time_der_eqn})-(\ref{sigma_i_time_der_eqn}), (\ref{alpha_0_t_disc_16_2}) are given by
\begin{align}
\FD{t}\theta(\alpha_i)&=\frac{1}{s_{\alpha}}\bigmpa{S_h (u_n)_h(\alpha_i)
+S_h \theta(\alpha_i) (\phi_s)_h(\alpha_i)}+\mbox{O}(h^{m-3})\mbox{,}\label{theta_err_eqn} \\
\FD{t} s_{\alpha}&=-\ip{(u_n)_h(\cdot)S_h\theta(\cdot)}_h+\mbox{O}(h^{m-2})\mbox{,}
\label{s_alpha_err_eqn}\\
\PAD{t}{\alpha_0}(\alpha_i)&=\frac{S_h\alpha_0(\alpha_i)}{s_{\alpha}e^{i\theta(\alpha_i)}}
\bigpa{
(u_n)_h ie^{i\theta}+(\phi_s)_h e^{i\theta}-u_h
}(\alpha_i)+\mbox{O}(h^{m-2})\mbox{.}
\end{align}
We also need to check consistency of the discrete version of kinematic condition (\ref{vel_eqn_9a_1}).
Differentiate (\ref{Z_Int_disc}) with respect to $t$ to obtain
\begin{align}
\frac{d\tau_i}{dt}
=\frac{d\tau_c}{dt}
+S_h^{-1}\Bigpa{
\frac{d\sigma}{dt}
e^{i\theta}
+i\sigma e^{i\theta}\frac{d\theta}{dt}
-\ip{
\frac{d\sigma}{dt}
e^{i\theta}
+i\sigma e^{i\theta}\frac{d\theta}{dt}
}_h
}_i\mbox{,}
\end{align}
where from (\ref{tau_c_time_der_hat_u_0}),
\begin{align}
\frac{d\tau_c}{dt}=\ip{v}_h\mbox{.}
\end{align}
Then it is easy to see that
\begin{align}
\frac{d\tau}{dt}(\alpha_i)
=\frac{d\tau_h}{dt}(\alpha_i)
+\mbox{O}(h^{m-3})\mbox{.}
\end{align}
Taken together, the above results prove the following consistency result:
\begin{lemma}
Under the assumption that $\theta(\cdot,t)$ and $\alpha_0(\cdot,t)$ are in $C^{m+1}[-\pi,\pi]$,
$\MCAL{S}_0(\cdot)$ is in $C^m[-\pi,\pi]$, and $\omega(\cdot,t)$ is in $C^{m-1}[-\pi,\pi]$,
the exact solution of the evolution equations satisfy the discrete equations with a truncation error at most of size O($h^{m-3}$).
\end{lemma}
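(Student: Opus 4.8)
The plan is to prove the lemma by a term-by-term accounting: substitute the exact solution into each discrete equation of Section~\ref{sec:numerical_method} and track how every discrete operator ($S_h$, $S_h^{-1}$, $\MCAL{H}_h$, the alternate-point trapezoidal rule, and the filter) degrades the order in $h$ relative to the regularity class of the quantity it acts on. The four preliminary lemmas of the previous section (Lemma~\ref{S_h_estimate}, Lemma~\ref{pseudo_int_est_lemma}, Lemma~\ref{trapezoidal_rule_estimate}, Lemma~\ref{filter_lemma}) supply the atomic error bounds; the work is in chaining them correctly and then identifying the single worst term, which will turn out to be $O(h^{m-3})$.

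First I would record the master geometric estimate $\tau_h(\alpha_i)=\tau(\alpha_i)+O(h^{m+1/2})$ from (\ref{Z_Int_disc}) and Lemma~\ref{pseudo_int_est_lemma}, using $\theta(\cdot,t)\in C^{m+1}$ so that $e^{i\theta}\in C^{m+1}$, and noting that $S_h\tau_h$ carries no error because the exact $s_\alpha e^{i\theta}$ is substituted directly. Then for the density equation: the desingularized kernel $F(\alpha,\cdot)$ lies in $C^{m-1}$, so replacing $\tau_h$ by $\tau$ inside the sum costs $O(h^{m+1/2})\cdot O(h^{-1})=O(h^{m-1/2})$ (the $h^{-1}$ coming from the minimal separation $|\tau(\alpha_j)-\tau(\alpha_i)|$), and by Lemma~\ref{trapezoidal_rule_estimate}, together with the standard reduction of the alternate-point rule to the plain trapezoidal rule, the quadrature error is $O(h^{m-1})$. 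The right-hand side $\bm g$ contains $S_h^2\theta/s_\alpha^2$, which by Lemma~\ref{S_h_estimate} contributes the dominant $O(h^{m-3/2})$; this yields (\ref{om_eqn_trunc_error}). The estimate (\ref{omh_est}) on $\bm\omega_h$ then follows from the bootstrap: writing $\bm\omega_h=\bm\omega(\alpha_i)+h^q\bm r_i$, substituting into (\ref{omh_eqn}), and using the invertibility of $\MBF I+\beta\MBF K_h$ for small $\beta$ (Lemma~\ref{I_plus_beta_K_matrix_lemma_6_1}) to force $q=m-3/2$.

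Next I would handle the velocity. The desingularized kernel $G(\alpha,\cdot)\in C^{m-1}$, so the regular-integral part $(u_R)_i$ of (\ref{vel_decomp}) has quadrature error $O(h^{m-3/2})$, the dominant contribution coming from replacing $\omega_h^p$ by $\omega$ via Lemma~\ref{filter_lemma} and (\ref{omh_est}). The discrete Hilbert transform term is controlled by the estimate of \cite{BHL:1996}, whose accuracy is tied to the regularity of $\omega_\alpha\in C^{m-2}$, hence $O(h^{m-2})$; this is the limiting term, giving $u_h(\alpha_i)=u(\alpha_i)+O(h^{m-2})$. From this, $(u_n)_h,(u_s)_h=u_{n,s}(\alpha_i)+O(h^{m-2})$ by (\ref{norm_and_tang_vel}); applying $S_h$ loses one derivative, so $S_h(u_n)_h=(u_n)_\alpha(\alpha_i)+O(h^{m-3})$; and $(\phi_s)_h=\phi_s(\alpha_i)+O(h^{m-2})$ via $S_h^{-1}$ and (\ref{phi_s_disc}), (\ref{phi_s_err_eqn}). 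Feeding these into (\ref{theta_i_time_der_eqn})--(\ref{sigma_i_time_der_eqn}), (\ref{alpha_0_t_disc_16_2}) and into the differentiated form of (\ref{Z_Int_disc}): the $\theta_t$ and $\tau$-kinematic equations inherit the $O(h^{m-3})$ from $S_h(u_n)_h$, while the $s_\alpha$ and $\alpha_0$ equations are $O(h^{m-2})$. Taking the maximum over all the discrete equations gives the claimed $O(h^{m-3})$.

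The place where genuine care is required rather than routine bookkeeping is twofold: (i) justifying that the alternate-point trapezoidal rule applied to the near-singular integrands $F$ and $G$ (and to the Hilbert kernel) has the \emph{same} order as the plain trapezoidal rule on the smooth desingularized integrand, which rests on the parity-cancellation argument of \cite{HLK:1991}, \cite{BHL:1996}; and (ii) pinning down the exponent in (\ref{omh_est}), since the discrete solve $(\MBF I+\beta\MBF K_h)^{-1}$ must be shown to neither improve nor worsen the order, which relies on the uniform boundedness of the discrete inverse from Lemma~\ref{I_plus_beta_K_matrix_lemma_6_1}. Once those two points are granted, the remainder is mechanical application of the four preliminary lemmas and collecting the worst power of $h$.
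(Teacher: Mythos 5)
Your proposal is correct and follows essentially the same route as the paper: the same master estimate on $\tau_h$, the same smoothness analysis of the desingularized kernels $F$ and $G$, the same chain through Lemmas \ref{S_h_estimate}--\ref{filter_lemma}, the same bootstrap for (\ref{omh_est}) via Lemma \ref{I_plus_beta_K_matrix_lemma_6_1}, and the same identification of $S_h(u_n)_h$ as the source of the limiting $O(h^{m-3})$ term. The two delicate points you flag (the alternate-point reduction and the exponent in the discrete solve) are exactly the ones the paper leans on.
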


\section{Statement of Main Convergence Theorem}
\noindent To show convergence of the numerical method, we need to establish the stability of the discrete scheme.
We first do this for special case of viscosity matched fluids, for which $\beta=0$ and $\chi=\frac{1}{2}$.
Define the errors between the exact and numerical solutions as
\begin{align}
\dot{\theta}_j=\theta_j-\theta(\alpha_j)\mbox{,}\nonumber \\
\dot{\omega}_j=\omega_j-\omega_h(\alpha_j)\mbox{,}\nonumber \\
\dot{u}_j=u_j-u_h(\alpha_j)\mbox{,}
\end{align}
and so forth. To show stability, we plan to obtain a system of evolution equations for these errors and perform energy estimates to show they remain bounded for $t\leq T$, where $T$ is the assumed existence time for an exact solution to the continuous problem.

Therefore, our first task is to estimate quantities such as $\dot{(u_n)}_i$ and $\dot{(\phi_s)}_i$ in terms of the errors $\dot{\theta}_i,\dot{\sigma}_i$, $\dot{\alpha}_{0i}$.
This can be done by identifying the most singular part in the variation $\dot{u}_i=u_i-u_h(\alpha_i)$ of the complex velocity.
The estimates can be separated into linear and nonlinear terms in $\dot{\theta}_i,\dot{\sigma}_i$, $\dot{\alpha}_{0i}$.
The nonlinear terms can be controlled by the high accuracy of the method for smooth solutions.
Thus the leading order error contribution comes from the linear terms.

%
%

We now state the convergence theorem for our numerical method:
\begin{thm}\label{MainThm}
Assume that for $0\leq t\leq T$ there exists a smooth solution of the continuous problem (\ref{S_ALPHA_EVOLVEQN})$-$(\ref{THETA_EVOLVEQN}), 
(\ref{alpha_0_t_disc_16_2}) with $\theta(\cdot,t)$, $\alpha_0(\cdot,t)$ in $C^{m+1}[-\pi,\pi]$ and $\MCAL{S}(\cdot,0)\in C^m[-\pi,\pi]$ for $m$ sufficiently large, and that:
\begin{align}
\DPS\min_{ 0\leq t\leq T} s_{\alpha}(t)>c
\mbox{, for some }c>0\mbox{.}
\end{align}
If $\sigma^{(h)}$, $\theta^{(h)}$ and $\alpha_0^{(h)}$ denote the numerical solution for $s_{\alpha},~\theta$ and $\alpha_0$, then for $h$ and $\beta$ sufficiently small and for all $0 \leq t \leq T$,
\begin{align}
\norm{\sigma^{(h)}(t)-s_{\alpha}(t)}_{l^2}&\leq c(T)h^s\mbox{,}\nonumber \\
\norm{\theta^{(h)}(t)-\theta(\cdot,t)}_{l^2}&\leq c(T)h^s\mbox{,}\nonumber \\
\norm{\alpha_0^{(h)}(t)-\alpha_0(\cdot,t)}_{l^2}&\leq c(T)h^s\mbox{,}
\end{align}
where $s=m-l$ and $l$ is small positive integer that is independent of $m$ (i.e., $s$ is near $m$).  In addition, the discrete interface shape $\tau^{(h)}(t)$ satisfies
\begin{align}
    \norm{\tau^{(h)}(t)-\tau(\cdot,t)}_{l^2}&\leq c(T)h^s\mbox{.}
\end{align}
Here
\begin{align}
\norm{u}_{l^2}&= \Bigpa{h\DPS\sum_{j=-\frac{N}{2}+1}^{\frac{N}{2}}\abs{u_j}^2}^\frac{1}{2} \mbox{.}
\end{align}
\end{thm}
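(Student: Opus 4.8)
The plan is to follow the now-standard convergence framework of \cite{BHL:1996}: consistency has already been established (truncation error $O(h^{m-3})$ for the exact solution substituted into the discrete equations), so it remains only to prove \emph{stability} of the discrete scheme, i.e., a discrete energy inequality that controls $\norm{\dot\theta}_{l^2}$, $\abs{\dot\sigma}$, and $\norm{\dot\alpha_0}_{l^2}$ in terms of themselves and the truncation error. Convergence at rate $h^s$ with $s=m-l$ then follows by a Gronwall argument on the error evolution equations, and the interface-shape estimate $\norm{\tau^{(h)}(t)-\tau(\cdot,t)}_{l^2}\le c(T)h^s$ follows immediately from (\ref{Z_Int_disc}) once $\dot\sigma$ and $\dot\theta$ are controlled, since $S_h^{-1}$ is a bounded operator on zero-mean sequences and $e^{i\theta}$ is Lipschitz in $\theta$. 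I would restrict first to the viscosity-matched case $\beta=0$, $\chi=\tfrac12$, where the density $\omega$ is given explicitly by (\ref{num_approx_omega_exp_i_theta}) and no Fredholm solve is needed; the general small-$\beta$ case is then handled perturbatively using the invertibility of $\mathbf{I}+\beta\mathbf{K}$ from Lemma~\ref{I_plus_beta_K_matrix_lemma_6_1}.

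The core of the argument is to differentiate $\tfrac12\tfrac{d}{dt}$ of a suitable discrete energy. The natural choice, mimicking \cite{BHL:1996}, is $\mathcal{E}(t)=\norm{\dot\theta}_{l^2}^2+\norm{\dot\alpha_0}_{l^2}^2+\abs{\dot\sigma}^2$ augmented by a few lower-order corrector terms (and possibly by weighting high modes), chosen so that the troublesome contributions integrate by parts to zero or have a favorable sign. The key structural input is the decomposition (\ref{num_approx_u_n_i_decomp_1}): the leading-order piece of the normal velocity is $\tfrac{\kappa_B}{4\sigma^2}\mathcal{H}_h(S_h^2\theta_i)$, which is \emph{linear} in $\theta$ with a spatially constant, sign-definite coefficient. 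Feeding this into $(\theta_t)_i=\tfrac1\sigma(S_h(u_n)_i+(\phi_s)_iS_h\theta_i)$ and then into the error equation produces, in the energy estimate, a term of the form $-c\,\ip{S_h\dot\theta,\,\mathcal{H}_h S_h^2\dot\theta}$; since $\mathcal{H}_h$ acts as $-i\,\mathrm{sgn}(k)$ on Fourier modes and the operator $-\partial_\alpha \mathcal{H}_h \partial_\alpha^2 \sim \abs{k}^3$-type is sign-definite (this is the discrete parabolic-smoothing term, analogous to $-|k|^3$ dissipation from Hele-Shaw/water-wave surface tension), this term is \emph{negative} and gives a genuine smoothing gain of half a derivative beyond $l^2$, which is then available to absorb all lower-order commutator and nonlinear contributions. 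Crucially, one must verify that filtering is \emph{not} applied to this leading term (only inside $[\mathcal{H}_h,e^{-i\theta_i}]$ and in $(u_R)_i$), exactly as the discretization was set up; otherwise the gain is destroyed.

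The remaining terms on the right-hand side of the energy identity fall into three groups. First, the \emph{linear lower-order} terms: commutator expressions like $[\mathcal{H}_h,e^{-i\theta_i}](\omega_i^p)$ and the $(\phi_s)_i S_h\theta_i$ term; these are at most first-derivative in $\dot\theta$ and are absorbed into the $\abs{k}^{3/2}$-gain by Cauchy-Schwarz with a small constant, after using the sharp bound on discrete Hilbert-transform commutators (the commutator with a $C^1$ function gains a derivative — this is where the filter $\omega^p$ and condition (iv) matter). Second, the \emph{nonlinear} terms in $\dot u$: by the estimates deferred to Sections 8--9 and the Appendix, these are bounded by $c(\norm{\dot\theta}_{s'}+\dots)$ times a power of the energy, hence controlled by Gronwall once the energy is known a priori small (bootstrapping from the $O(h^{m-3})$ consistency error). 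Third, the \emph{truncation error} terms, which contribute $O(h^{m-3})\sqrt{\mathcal{E}}$ and thus $O(h^{2(m-3)})$ after integration. The main obstacle, as in all the cited works, is the delicate \emph{discrete} bookkeeping in the second group: one must track that every singular-integral/derivative combination in the variation $\dot u_i$ either telescopes against a discrete integration-by-parts identity or is genuinely lower order, and that the alternate-point trapezoidal rule's quadrature error does not couple high modes destructively — this is precisely the place where \cite{BHL:1996} found that filtering at select locations is unavoidable, and where the analogous argument here must be adapted to the more complicated Sherman--Lauricella kernels. Once the stability inequality $\tfrac{d}{dt}\mathcal{E}\le c\,\mathcal{E}+c\,h^{2(m-3)}$ is in hand, Gronwall gives $\mathcal{E}(t)^{1/2}\le c(T)h^{m-3}$, and absorbing the loss of a fixed number $l$ of powers of $h$ into the regularity index yields the stated $h^s$ rates for $\sigma^{(h)},\theta^{(h)},\alpha_0^{(h)}$, and $\tau^{(h)}$.
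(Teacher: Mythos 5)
Your proposal follows essentially the same route as the paper: consistency plus an $l^2$ energy estimate on $(\dot\sigma,\dot\theta,\dot\alpha_0,\dot\tau_c)$, with the unfiltered bending term $\tfrac{\kappa_B}{4\sigma^2}\mathcal{H}_h(S_h^2\theta_i)$ supplying the sign-definite $-|k|^3$ dissipation that absorbs the first-derivative commutator terms, nonlinear terms handled by a bootstrap ($T^\ast$) argument from consistency, the drop/viscosity-contrast case treated perturbatively via the boundedness of $(\mathbf{I}+\beta\mathbf{K})^{-1}$, and Gronwall closing the estimate. The only ingredient you leave implicit is that the $\dot\alpha_0$ equation needs its own sign-definite smoothing term $-\tilde f_2\,\mathcal{H}_h D_h\dot\alpha_0$ with $\tilde f_2>0$, but this is consistent with your outline and is exactly what the paper does.
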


\section{Stability: Preliminaries}
Following \cite{BHL:1996} and \cite{CH:1998}, we introduce notation for an $n$-th order smoothing operator $A_{-n}$ which acts on a discrete function $\phi_j$ and satisfies
\begin{align}
\norm{D_h^k(A_{-n}(\dot{\phi}))}_{l^2}&\leq c\norm{\dot{\phi}}_{l^2}\mbox{ and }
\norm{A_{-n}(S_h^k(\dot{\phi}))}_{l^2}\leq c\norm{\dot{\phi}}_{l^2}
\mbox{ for $0\leq k\leq n$,}
\end{align}
where $S_h$ is the spectral derivative (\ref{S_h_defn}) and   $D_h$ is the spectral derivative operator with smoothing.
When $n=0$, $A_0(\dot{\phi_j})$ denotes a bounded operator in $l^2$,
\begin{align}
\norm{A_0(\dot{\phi})}_{l^2}&\leq c\norm{\dot{\phi}}_{l^2}\mbox{.}
\end{align}
\begin{remark}
Note that if $f(\dot{\phi}_i)=A_0(\dot{\phi}_i)$, then $h^s f(\dot{\phi}_i)=A_{-s}(\dot{\phi}_i)$.
However, $f(\dot{\phi}_i)=A_{-s}(\dot{\phi}_i)$ does not imply $f=\mbox{O}(h^s)$.
\end{remark}
\begin{remark}
We use the expression $A_{-s}(\dot{\phi}_i)$ to denote a generic high-order smoothing operator.
Generally, $s$ is an integer near $m$, 
where $m$ defines the regularity of the continuous solution (e.g.,  $\theta(\cdot,t)\in C^{m+1}$, etc.).
Similarly, we denote by  $\mbox{O}(h^s)$  a generic high-order discretization error. 
At the end of our proof, we choose $m$ and $s$ large enough so that all the estimates go through.
\end{remark}
\begin{remark}
Unless otherwise noted, we use the phrase ``smooth function" to denote a generic function $f\in C^s[-\pi,\pi]$ with high order regularity.
\end{remark}
We define a time 
\begin{align} \label{T*def}
T^{\ast}\equiv \sup\bigbra{t: 0\leq t\leq T, \norm{\dot{\sigma}}_{l^2}
,\norm{\dot{\theta}}_{l^2},\norm{\dot{\zeta}}_{l^2},\norm{\dot{\alpha_0}}_{l^2}\leq h^{\frac{7}{2}} }
\mbox{,}
\end{align}
where the  power of $h$ in (\ref{T*def}) is chosen for so that the estimates below easily go through.
All the estimates we obtain are valid for $t\leq T^{\ast}$. We ``close the argument" and prove Theorem \ref{MainThm} by showing at the end that $T^{\ast}=T$, the existence time of the continuous solution.
We make repeated use of the inequalities
\begin{align}\label{dot_theta_sigma_alpha_0_assumptions_135}
\norm{\dot{\theta}}_{\infty}\leq h^3\mbox{, }
\norm{\dot{\sigma}}_{\infty}\leq h^3\mbox{, and }
\norm{\dot{\alpha_0}}_{\infty}\leq h^3\mbox{, for }
t\leq T^{\ast}.
\end{align}
The above estimate on $\dot{\theta}$ follows from $h\abs{\dot{\theta}_i}^2\leq \norm{\dot{\theta}}_{l^2}^2$, for $t\leq T^{\ast}$, so that $\norm{\dot{\theta}}_{\infty}\leq h^{-\frac{1}{2}}\norm{\dot{\theta}}_{l^2}\leq h^3$, with similar estimates applying to $\norm{\dot{\sigma}}_{\infty}$ and $\norm{\dot{\alpha_0}}_{\infty}$.
\newline
\newline
\underline{\textit{Preliminary Lemmas}}.
\newline
\newline

We will frequently encounter a discrete operator of the form:
\begin{equation}\label{RHDEFN}
R_h(\phi_i)= \DPS\sum_{\substack{j=-\frac{N}{2}+1\\(j-i){\rm\ odd}}}^{\frac{N}{2}}f(\alpha_i,\alpha_j) \phi_j (2h)\mbox{,}
\end{equation}
where $f(\alpha,\alpha')$ is a smooth periodic function in both variables, and $\phi$ is a discrete periodic function. Beale, Hou and Lowengrub \cite{BHL:1996} prove the following estimate on $R_h$ applied to a filtered discrete function $\phi_i^p$:
\begin{lemma}\label{R_h_lemma}
Assume $f(\alpha,\alpha')$ is a smooth periodic function in both $\alpha$ and $\alpha'$, with $f(\cdot,\cdot)$ 
in $C^r$ for $r>3$. Then
\begin{equation}\label{RHPhiP2}
R_h(\phi^p_i)=A_{-2}(\phi_i)\mbox{.}
\end{equation}
\end{lemma}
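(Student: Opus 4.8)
The plan is to expand the alternate-point sum defining $R_h(\phi_i^p)$ in Fourier modes of the filtered density and exploit the smoothness of $f$ to show that every Fourier coefficient of the output decays at least like $(1+|k|^2)^{-1}$ relative to the input. Write $\phi_j^p = \sum_{|k|\le N/2}\rho(kh)\hat{\phi}_k e^{ik\alpha_j}$. Because the summation index $j$ runs only over those $j$ with $(j-i)$ odd and is weighted by $2h$, the standard identity (see \cite{BHL:1996}, \cite{HLK:1991}) lets us replace the alternate-point sum by a full trapezoidal sum plus a term coming from the ``shifted'' Fourier modes (the aliasing induced by summing over every other point folds mode $k$ into $k\pm N/2$). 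So the first step is to record this decomposition: $R_h(\phi_i^p) = T_h(\phi_i^p) + (\text{alternating correction})$, where $T_h$ is the ordinary trapezoidal quadrature of $f(\alpha_i,\cdot)\phi^p(\cdot)$.

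The second step is to estimate $T_h(\phi_i^p)$. Expand $f(\alpha_i,\alpha')$ in a Fourier series in $\alpha'$; since $f(\cdot,\cdot)\in C^r$ with $r>3$ (in particular $r>2$), its Fourier coefficients $\hat{f}^{(i)}_\ell$ (for fixed $\alpha_i$) satisfy $|\hat{f}^{(i)}_\ell|\le c(1+|\ell|)^{-r}$ uniformly in $i$, with the constant controlled by $\|f\|_{C^r}$. The trapezoidal sum of the product $f(\alpha_i,\alpha')\phi^p(\alpha')$ then becomes, after using the exactness of the trapezoid rule on low-frequency exponentials and collecting aliasing contributions, a bilinear convolution in the $\hat f^{(i)}_\ell$ and $\rho(kh)\hat\phi_k$. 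The decay $(1+|\ell|)^{-r}$ with $r>3$ makes the convolution operator bounded from $l^2$ to the discrete Sobolev space of order $2$: concretely, for each output mode $n$ one bounds $\big|\widehat{R_h(\phi^p)}_n\big|(1+|n|)^2$ by $\sum_\ell |\hat f^{(i)}_\ell|(1+|\ell|)^2 \cdot (\text{bounded factor}) \cdot |\rho((n-\ell)h)\hat\phi_{n-\ell}|$ plus aliased analogues, and Young's inequality together with $r-2>1$ (so that $(1+|\ell|)^{2-r}\in l^1$) yields $\|R_h(\phi_i^p)\|_{\text{(discrete $H^2$)}}\le c\|\phi\|_{l^2}$. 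Here it is essential that the filter $\rho$ is uniformly bounded and is applied to $\phi$, so that the output really has two extra degrees of smoothness rather than merely being in $l^2$; the kernel $f$ contributes the needed decay while the filter prevents the top modes of $\phi$ from spoiling the $H^2$ bound. By the definitions in Section~8, $\|D_h^k v\|_{l^2}\le c\|v\|_{l^2}$ with two derivatives absorbed means precisely $v = A_{-2}(\phi)$, and likewise $\|A_{-2}(S_h^k\phi)\|_{l^2}\le c\|\phi\|_{l^2}$ follows by moving the $S_h^k$ onto the kernel via summation by parts (discrete integration by parts in $\alpha'$, legitimate since $f$ is smooth and periodic), which costs at most $k\le 2$ derivatives on $f$ and leaves $r-2>1$ to spare.

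The third step is the alternating correction. This term has the same bilinear structure but with $\hat\phi_{n-\ell}$ replaced by the folded coefficient $\hat\phi_{n-\ell\mp N/2}$; the point is that this folding does not change the $l^2$ norm of the density, and the kernel decay $(1+|\ell|)^{-r}$ is unaffected, so the same Young-inequality argument applies verbatim. I would state this as: the alternate-point quadrature has the same smoothing order as the trapezoidal quadrature, citing the argument of \cite{BHL:1996}, \cite{HLK:1991} that was already invoked for~\eqref{J_h_err_i_even_odd}.

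The main obstacle is bookkeeping rather than conceptual: one must be careful that the filter is applied to $\phi$ and \emph{not} to the kernel, and that summation-by-parts to handle $A_{-2}(S_h^k\phi)$ is carried out at the discrete level without boundary terms (periodicity) and without losing more than $r-2$ of the available smoothness of $f$. A secondary subtlety is that for the ``diagonal'' behavior near $\alpha'=\alpha_i$ one uses that $f$ is genuinely smooth there (the apparent singularities in the kernels of Sections~3--4 are removable, as already noted), so no principal-value analysis enters; $R_h$ is a quadrature of a smooth periodic integrand and the whole estimate is an exercise in discrete Fourier analysis. Since this lemma is quoted verbatim from \cite{BHL:1996}, the cleanest exposition is to reduce to their Lemma and remark only on the adaptation to the matrix-valued smooth kernels arising here, which changes nothing in the estimates.
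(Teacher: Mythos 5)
The paper does not actually prove this lemma: it is quoted from \cite{BHL:1996}, and the surrounding text only supplies the counterexample showing the filter is necessary and the weaker unfiltered bound $R_h(\phi_i)=A_0(\phi_i)$ via Cauchy--Schwarz. Your skeleton --- alternate-point sum equals trapezoidal sum plus a Nyquist-modulated correction, Fourier decay of the smooth kernel, summation by parts for the $A_{-2}(S_h^k\phi)$ half --- is the right one and is essentially the argument in \cite{BHL:1996}. But the central estimate of your second step does not work as written. The bound $\bigl|\widehat{R_h(\phi^p)}_n\bigr|(1+|n|)^2\leq\sum_\ell|\hat f^{(i)}_\ell|(1+|\ell|)^2\cdot(\text{bounded})\cdot|\rho((n-\ell)h)\hat\phi_{n-\ell}|$ presumes that the output frequency $n$ is the sum of the kernel's $\alpha'$-frequency and the density frequency, which holds only for difference kernels $f(\alpha-\alpha')$; for a general smooth $f(\alpha,\alpha')$ the quadrature in $\alpha'$ pairs $\hat f_{m,\ell}$ with $\hat\phi_{-\ell}$ (mod $N$) and the output's $n$-dependence enters only through the first Fourier index of $f$, so there is no convolution in $n$. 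Even for a difference kernel, $(1+|n|)^2\leq (1+|\ell|)^2\cdot(\text{bounded})$ is false uniformly; repairing it costs a factor $(1+|n-\ell|)^2$ on the density and yields an $H^2\to H^2$ bound rather than the required $l^2\to H^2$ bound. The correct mechanism is that the two derivatives $D_h^2$ applied to the output fall on the \emph{first} argument of $f(\alpha_i,\alpha_j)$ (this is where $r>3$ is spent), after which Cauchy--Schwarz in $j$ --- the Hilbert--Schmidt computation already displayed in Remark \ref{R_h_A_0_no_filt_equiv_remark} --- gives boundedness on $l^2$; the $S_h^k$ half is obtained by moving derivatives onto the second argument by discrete summation by parts, as you indicate.

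The second, more serious soft spot is your third step. Asserting that the folded term is handled ``verbatim'' because $\rho$ is bounded and folding preserves the $l^2$ norm skips exactly the point where the proof can fail: the dangerous contributions come from kernel modes $\ell=O(1)$ paired, through the Nyquist folding, with density modes $k$ within $O(1)$ of $\pm N/2$; these land at low output frequencies with an $O(1)$ kernel coefficient, so boundedness of $\rho$ alone yields only $A_0(\phi_i)$ --- this is precisely the counterexample the paper gives immediately after the lemma. What rescues the filtered operator is the quantitative vanishing $\rho(\pm\pi)=\rho'(\pm\pi)=0$: for $k=N/2-p$ one has $\rho(kh)=O(p^2h^2)$, hence $k^2\rho(kh)=O(p^2)$, and pairing this with $|\hat f_\ell|\leq c(1+|\ell|)^{-r}$, $r>3$, in the folded regime (where $|\ell|$ is comparable to $p$) produces a convergent sum. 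Without invoking this second-order zero of $\rho$ at $\pm\pi$, the aliased contributions are not $A_{-2}$, and the property $\|R_h((S_h^2\phi)^p)\|_{l^2}\leq c\|\phi\|_{l^2}$ fails.
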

We note that the application of the filter is essential in  (\ref{RHPhiP2}) due to aliasing error. To see this, consider the following example adapted from \cite{BHL:1996}.
Let $g(\alpha)=e^{2i\alpha}$, define
\begin{align}
f(\alpha,\alpha')=\frac{1}{2\pi}\bigpa{g(\alpha)-g(\alpha')}\cot\Bigpa{\frac{\alpha-\alpha'}{2}}
\mbox{, }
f(\alpha,\alpha)=\frac{g_{\alpha}(\alpha)}{\pi}\mbox{, }
\end{align}
and let $\phi_i=e^{i\alpha_i(\frac{N}{2}-1)}$. Then using Lemma \ref{Cauchy_Transform_Lemma} below and the fact that $e^{i\alpha_i(\frac{N}{2}+1)}$ is aliased to $e^{i\alpha_i(-\frac{N}{2}+1)}$, we have
\begin{align}
R_h(\phi_i)&=\frac{e^{2i\alpha_i}}{2\pi}\DPS\sum_{\substack{j=-\frac{N}{2}+1\\(j-i){\rm\ odd}}}^{\frac{N}{2}}
e^{i\alpha_j(\frac{N}{2}-1)}
\cot\Bigpa{\frac{\alpha_i-\alpha_j}{2}}
(2h)\nonumber\\
&-\frac{1}{2\pi}\DPS\sum_{\substack{j=-\frac{N}{2}+1\\(j-i){\rm\ odd}}}^{\frac{N}{2}}
e^{i\alpha_j(-\frac{N}{2}+1)}
\cot\Bigpa{\frac{\alpha_i-\alpha_j}{2}}
(2h) \nonumber\\
&=-i\bigpa{e^{2i\alpha_i}e^{i\alpha_i(\frac{N}{2}-1)}+e^{i\alpha_i(-\frac{N}{2}+1)}}\mbox{}\nonumber\\
&=-2ie^{i\alpha_i(-\frac{N}{2}+1)}
=-2ig(\alpha_i)\phi_i=A_0(\phi_i)\mbox{.}
\end{align}

\begin{remark}\label{R_h_A_0_no_filt_equiv_remark}
If no filtering is applied, then it is easy to see that
\begin{equation}\label{RHPhiNoFilter}
R_h(\phi_i)=A_0(\phi_i)\mbox{.}
\end{equation}
\end{remark}
\noindent
Indeed we note that by the Schwartz inequality,
\begin{align}
\norm{R_h(\phi)}_{l^2}&= \Biggpa{h\DPS\sum_{i=-\frac{N}{2}+1}^{\frac{N}{2}}
\Biggabs{
\DPS\sum_{\substack{j=-\frac{N}{2}+1\\(j-i){\rm\ odd}}}^{\frac{N}{2}}
f(\alpha_i,\alpha_j) \phi_j (2h)}^2
}^{\frac{1}{2}}\nonumber\\
&\leq 2h \norm{\phi}_{l^2} \left(  \DPS\sum_{i=-\frac{N}{2}+1}^{\frac{N}{2}}
\DPS\sum_{\substack{j=-\frac{N}{2}+1\\(j-i){\rm\ odd}}}^{\frac{N}{2}}
\abs{f(\alpha_i,\alpha_j)}^2 
\right)^\frac{1}{2}
\nonumber\\
&\leq 4\norm{f}_{l^2}\norm{\phi}_{l^2}\mbox{,}
\end{align}
where
\begin{align}
\norm{f}_{l^2}=\biggpa{
h^2
\DPS\sum_{i=-\frac{N}{2}+1}^{\frac{N}{2}}
\DPS\sum_{j=-\frac{N}{2}+1}^{\frac{N}{2}}
\abs{f(\alpha_i,\alpha_j)}^2
}^{\frac{1}{2}}
\mbox{.}
\end{align}

The Hilbert transform $\MCAL{H}\omega$ is the leading order part (i.e., least regular term) in the velocity (\ref{velintsubstractHilb_1}).
This will be seen to play a crucial rule in the stability of our discretization.
The continuous Hilbert transform satisfies
\begin{align}
\widehat{(\MCAL{H} f)}_k &= -i\mbox{sgn}(k)\hat{f}_k\mbox{,}
\label{H_cont_disc_real_line_prop_1} \\
\MCAL{H} (\MCAL{H} f(\alpha)) &= -f(\alpha)\mbox{,}
\label{H_cont_disc_real_line_prop_2} \\
\MCAL{H} (f_{\alpha})(\alpha) &= (\MCAL{H} f)_{\alpha}(\alpha)
\label{H_cont_disc_real_line_prop_3}
\end{align}
for a periodic function $f$ with zero mean.
The following lemma from \cite{BHL:1996} shows that the discrete transform acts in the same way.
\begin{lemma}\label{Cauchy_Transform_Lemma}
Assume that $f$ satisfies $\Hat{f}_0=\Hat{f}_{\frac{N}{2}}=0$. The discrete Hilbert transform  (\ref{discrete_Hilbert}) satisfies the following properties:
\end{lemma}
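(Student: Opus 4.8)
The plan is to reduce both asserted identities to a single statement: that $\MCAL{H}_h$ acts on grid functions as the Fourier multiplier $\widehat{(\MCAL{H}_h f)}_k = -i\,\mbox{sgn}(k)\,\hat{f}_k$ for $-\frac N2+1\le k\le\frac N2$, where $\mbox{sgn}(0)$ and $\mbox{sgn}(\frac N2)$ are both read as $0$, so that the mean mode and the Nyquist mode are annihilated (mirroring (\ref{H_cont_disc_real_line_prop_1})). Once this is in hand, the discrete analogues of (\ref{H_cont_disc_real_line_prop_2})--(\ref{H_cont_disc_real_line_prop_3}) — namely $\MCAL{H}_h S_h=S_h\MCAL{H}_h$ and $\MCAL{H}_h(\MCAL{H}_h f)=-f$ under the hypothesis $\hat{f}_0=\hat{f}_{\frac N2}=0$ — come for free in Fourier space: for $|k|<\frac N2$ one has $(-i\,\mbox{sgn}(k))(ik)=(ik)(-i\,\mbox{sgn}(k))$, while both $S_h$ (by (\ref{S_h_defn})) and $\MCAL{H}_h$ kill the $k=\frac N2$ output mode; and $(-i\,\mbox{sgn}(k))^2=-1$ whenever $k\ne 0,\frac N2$, the two excluded modes being disposed of by the hypothesis. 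Any real‑symmetry statement in the lemma, e.g. $\overline{\MCAL{H}_h f}=\MCAL{H}_h\bar f$, follows just as directly from the multiplier formula.

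So the substance is the multiplier formula, which I would establish by evaluating $\MCAL{H}_h$ on the exponential basis $e_k=(e^{ik\alpha_j})_j$, $k=-\frac N2+1,\dots,\frac N2$, which spans the grid functions. Setting $m=j-i$ and using that $\cot$ is odd, (\ref{discrete_Hilbert}) collapses to
\[
\MCAL{H}_h e_k(\alpha_i)=-\frac{h}{\pi}\,e^{ik\alpha_i}\,\Sigma_k,\qquad
\Sigma_k:=\DPS\sum_{\substack{m\ \mbox{\scriptsize odd}}} e^{2\pi i km/N}\cot\Bigpa{\frac{\pi m}{N}},
\]
where the summand is $N$‑periodic in $m$ and $m$ runs over the $\frac N2$ odd residues modulo $N$. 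The two boundary frequencies are elementary: for $k=0$, pairing $m\leftrightarrow N-m$ together with $\cot(\pi-\vartheta)=-\cot\vartheta$ gives $\Sigma_0=0$; for $k=\frac N2$, $e^{2\pi i(N/2)m/N}=(-1)^m=-1$ on odd $m$, so $\Sigma_{N/2}=-\Sigma_0=0$. For $0<|k|<\frac N2$ the claim is $\Sigma_k=\frac{i\pi}{h}\,\mbox{sgn}(k)$, i.e. $\MCAL{H}_h e_k=-i\,\mbox{sgn}(k)\,e_k$ — the discrete analogue of $\MCAL{H}e^{ik\alpha}=-i\,\mbox{sgn}(k)e^{ik\alpha}$, which is precisely the exactness of the alternate‑point trapezoidal rule for the periodic Hilbert kernel on trigonometric polynomials below the Nyquist degree. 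I would prove this by a direct computation — expanding $\cot(\pi m/N)$ through the finite geometric‑sum form and resumming over the odd‑$m$ grid, where only the term that aliases to the zero frequency survives — or, since the present lemma is quoted from \cite{BHL:1996}, simply invoke the argument given there (cf. also \cite{HLK:1991}). The case $k<0$ then follows by complex conjugation, $\cot(\pi m/N)$ being real.

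The main obstacle — indeed the only nontrivial ingredient — is this trigonometric quadrature identity for $0<|k|<\frac N2$; everything around it is bookkeeping in Fourier space. In writing it up I would also flag why the hypotheses $\hat{f}_0=\hat{f}_{\frac N2}=0$ are needed: the Nyquist mode must be discarded exactly as in the definition (\ref{S_h_defn}) of $S_h$, since otherwise $\MCAL{H}_h$ fails to be an involution up to sign on its range and $\MCAL{H}_h(\MCAL{H}_h f)=-f$ breaks down on that mode — which dovetails with the paper's earlier observation that zeroing the $k=\frac N2$ mode is important for stability.
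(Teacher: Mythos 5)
Your route is sound but genuinely different from the paper's. You diagonalize $\MCAL{H}_h$ on the exponential basis and reduce everything to the quadrature identity $\Sigma_k=\frac{i\pi}{h}\,\mbox{sgn}(k)$ for $0<|k|<\frac N2$ (your treatment of the $k=0$ and Nyquist modes by the $m\leftrightarrow N-m$ pairing and by $(-1)^m=-1$ on odd $m$ is correct, and the identity itself checks out, e.g.\ for $N=4$, $k=1$ one gets $\Sigma_1=2i=\frac{iN}{2}$). The paper instead converts the periodic cotangent kernel to an infinite-domain kernel via the partial-fraction expansion $\frac12\cot\frac z2=\frac1z+\sum_{k\ge1}\frac{2z}{z^2-(2k\pi)^2}$, obtaining $\MCAL{H}_h f_i=\frac1\pi\sum_{(j-i)\,\mathrm{odd}}\frac{f_j}{\alpha_i-\alpha_j}(2h)$, and then cites \cite{BHL:1996} for the properties of that real-line form. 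Your approach is more self-contained for the multiplier formula and makes the necessity of the hypotheses $\hat f_0=\hat f_{N/2}=0$ transparent; the paper's approach instead front-loads the kernel conversion because it is reused elsewhere.

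There is, however, one genuine omission. The lemma's third property is not only the commutation $\MCAL{H}_h S_h=S_h\MCAL{H}_h$ but also the explicit representation
\begin{align*}
\MCAL{H}_h(S_h f_i)=\frac1\pi\sum_{(j-i)\,\mathrm{odd}}\frac{f_i-f_j}{(\alpha_i-\alpha_j)^2}\,(2h),
\end{align*}
and this cannot be read off from the Fourier multiplier alone: it requires precisely the periodic-to-infinite-domain conversion of the kernel (and its differentiated analogue $\frac{1}{\sin^2(z/2)}=4\sum_n(z-2n\pi)^{-2}$) that the paper carries out. That formula is not decorative — it is used verbatim in Lemma \ref{J_h_lemma} to identify $\MCAL{H}_h(S_h\dot\tau_i)$ inside the velocity variation. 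To complete your proof you would need to add the kernel-conversion step (or an equivalent derivation of the difference-quotient sum), after which the rest of your Fourier-side bookkeeping stands as written.
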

\begin{align}
\widehat{(\MCAL{H}_h f)}_k &= -i\mbox{sgn}(k)\hat{f}_k\mbox{,}
\label{H_h_disc_real_line_prop_1} \\
\MCAL{H}_h (\MCAL{H}_h f_i) &= -f_i\mbox{,}
\label{H_h_disc_real_line_prop_2} \\
\MCAL{H}_h (S_h f_i)&=S_h (\MCAL{H}_h f_i)
=\frac{1}{\pi}\DPS\sum_{(j-i)\mbox{\rm\ odd}} \frac{f_i-f_j}{(\alpha_i-\alpha_j)^2} (2h)\mbox{.}
\label{H_h_disc_real_line_prop_3}
\end{align}
where $\DPS\sum_{(j-i){\rm\ odd}}$ is defined in (\ref{H_h_disc_lim}) below.
The first equality above also implies $\norm{\MCAL{H}_h f}_{l^2}=\norm{f}_{l^2}$.
\begin{proof}
We transform the kernel in (\ref{discrete_Hilbert}) from a representation in the periodic domain to an equivalent representation in the infinite domain.
This involves application of the formula \cite{AIMark:1977},
\begin{align}
\frac{1}{2}\cot(\frac{z}{2})=\frac{1}{z}+\DPS\sum_{k=1}^{\infty}
\frac{2z}{z^2-(2k\pi)^2}\mbox{,}
\end{align}
from which it is easy to obtain
(see \cite{BHL:1996} for details)
\begin{align}\label{H_h_disc_lim}
&\frac{1}{2}\DPS\sum_{\substack{j=-\frac{N}{2}+1\\(j-i){\rm\ odd}}}^{\frac{N}{2}}
f(\alpha_j)\cot\Bigpa{\frac{\alpha_i-\alpha_j}{2}}(2h)
=\DPS\lim_{M\RA\infty}\DPS\sum_{\substack{j=-M(N+\frac{1}{2})+1\\(j-i){\rm\ odd}}}^{N(M+\frac{1}{2})}
\frac{f(\alpha_j)}{\alpha_i-\alpha_j}(2h)\mbox{,} \nonumber \\
\equiv& \DPS\sum_{(j-i){\rm\ odd}}\frac{f(\alpha_j)}{\alpha_i-\alpha_j} (2h)\mbox{.}
\end{align}
where $\alpha_j=jh$ is extended outside the interval $-\frac{N}{2} +1 \leq j \leq \frac{N}{2}$.  We note that the first equality in (\ref{H_h_disc_lim}) relies on the particular form of the bounds in the right hand sum, although we will use the notation $\sum_{(j-i){\rm\ odd}}$   to denote more general infinite sums.
It follows from   (\ref{H_h_disc_lim}) with $f_j$ in place of $f(\alpha_j)$ that
\begin{align}\label{H_h_disc_real_line}
\MCAL{H}_h f_i&=\frac{1}{\pi}\DPS\sum_{(j-i){\rm\ odd}}
\frac{f_j}{\alpha_i-\alpha_j} (2h)\mbox{,}
\end{align}
is an equivalent form of (\ref{discrete_Hilbert}).
This form is proven to satisfy properties (\ref{H_h_disc_real_line_prop_1})-(\ref{H_h_disc_real_line_prop_3}) in \cite{BHL:1996}.
\end{proof}
We also need the following result on the commutator of the discrete Hilbert transform and a smooth function, from \cite{BHL:1996}.
\begin{lemma}\label{Commutator_thm_H_h_g}
Let $g(\cdot)\in C^r$ for $r>4$, and consider the commutator
\begin{align}
\bigmpa{\MCAL{H}_h,g}(\phi^p_i)
&=\MCAL{H}_h(g(\alpha_i)\phi^p_i)-g(\alpha_i)\MCAL{H}_h(\phi_i^p)\mbox{.}
\end{align}
Then $\bigmpa{\MCAL{H}_h,g}(\phi^p_i) =  A_{-2}(\phi_i)$.
\end{lemma}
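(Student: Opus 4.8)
The plan is to reduce this statement directly to Lemma~\ref{R_h_lemma}. First I would substitute the explicit kernel representation (\ref{discrete_Hilbert}) of $\MCAL{H}_h$ into both $\MCAL{H}_h\bigpa{g(\alpha_i)\phi_i^p}$ and $g(\alpha_i)\MCAL{H}_h\bigpa{\phi_i^p}$. Because the summation kernel $\cot\bigpa{(\alpha_i-\alpha_j)/2}$ is the same in both, their difference telescopes to
\begin{equation*}
\bigmpa{\MCAL{H}_h,g}(\phi^p_i)=\frac{h}{\pi}\DPS\sum_{(j-i){\rm\ odd}}\bigpa{g(\alpha_j)-g(\alpha_i)}\cot\Bigpa{\frac{\alpha_i-\alpha_j}{2}}\phi^p_j=\DPS\sum_{(j-i){\rm\ odd}}f(\alpha_i,\alpha_j)\,\phi^p_j\,(2h),
\end{equation*}
where $f(\alpha,\alpha')=\frac{1}{2\pi}\bigpa{g(\alpha')-g(\alpha)}\cot\bigpa{(\alpha-\alpha')/2}$, with the removable singularity filled in by $f(\alpha,\alpha)=-g_{\alpha}(\alpha)/\pi$. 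This exhibits the commutator as an operator of the form (\ref{RHDEFN}) applied to the filtered function $\phi^p_i$, so it suffices to check that $f$ satisfies the hypothesis of Lemma~\ref{R_h_lemma}.

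That hypothesis asks for $f(\cdot,\cdot)\in C^{r'}$ jointly with $r'>3$. I would verify this by writing $\cot\bigpa{(\alpha-\alpha')/2}=\frac{2}{\alpha-\alpha'}+\psi(\alpha-\alpha')$ with $\psi$ smooth and $2\pi$-periodic (valid on the torus, where $\alpha'=\alpha$ is the only singularity of the cotangent), so that
\begin{equation*}
f(\alpha,\alpha')=-\frac{1}{\pi}\int_0^1 g'\bigpa{\alpha+t(\alpha'-\alpha)}\,dt+\frac{1}{2\pi}\bigpa{g(\alpha')-g(\alpha)}\psi(\alpha-\alpha').
\end{equation*}
The first term is jointly $C^{r-1}$ when $g\in C^r$, and the second is jointly $C^r$; hence $f\in C^{r-1}$. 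Since the hypothesis is $g\in C^r$ with $r>4$, we have $r-1>3$, which is precisely the regularity required. Lemma~\ref{R_h_lemma} then yields $R_h(\phi^p_i)=A_{-2}(\phi_i)$, i.e. $\bigmpa{\MCAL{H}_h,g}(\phi^p_i)=A_{-2}(\phi_i)$, which is the assertion.

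The substantive content — control of the aliasing error that would otherwise prevent any smoothing gain — is entirely inherited from Lemma~\ref{R_h_lemma}, and the filter on $\phi$ is what makes it work: as Remark~\ref{R_h_A_0_no_filt_equiv_remark} and the example after Lemma~\ref{R_h_lemma} show, without the filter $R_h(\phi_i)$ is only $A_0(\phi_i)$. So the only real obstacle here is the regularity bookkeeping for $f$: one must account for the single derivative lost in the divided difference (this is exactly why the hypothesis is $r>4$ rather than $r>3$), and confirm that $f$, including its diagonal value $f(\alpha,\alpha)=-g_\alpha(\alpha)/\pi$, is $C^{r-1}$ on the whole torus even though the alternate-point sum never actually evaluates $j=i$. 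Once that is in place the lemma follows immediately.
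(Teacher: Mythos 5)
Your proof is correct and follows essentially the same route as the paper: the paper's proof likewise defines $f(\alpha,\alpha')=\frac{1}{2\pi}\bigpa{g(\alpha)-g(\alpha')}\cot\bigpa{\frac{\alpha-\alpha'}{2}}\in C^{r-1}$, identifies the commutator with $R_h(\phi_i^p)$, and invokes Lemma \ref{R_h_lemma}. The only difference is that you spell out the divided-difference argument establishing the joint $C^{r-1}$ regularity of $f$ (including the diagonal value), which the paper simply asserts.
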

\begin{proof}
Let
\begin{align}
f(\alpha,\alpha')=\frac{1}{2\pi}\bigpa{g(\alpha)-g(\alpha')}
\cot\Bigpa{
\frac{\alpha-\alpha'}{2}
}
\mbox{ }
\in C^{r-1}
\end{align}
and apply Lemma \ref{R_h_lemma}.
The result follows from noting that for this $f$,
\begin{align}
R_h(\phi_i^p)=\MCAL{H}_h(g(\alpha_i)\phi^p_i)-g(\alpha_i)\MCAL{H}_h \phi_i^p\mbox{.}
\end{align}
\end{proof}
We will also need a lemma on the commutator of the filtering operator and a smooth function.
The proof can be found in \cite{CeniPHDthesis:1995}.
\begin{lemma}\label{G_h_p_lemma}
Let $f(\alpha_i)\in C^r$ for $r\geq 2$, and $\phi\in l^2$.
Define:
\begin{align}
G_h^p(\phi_i)=\bigpa{f(\alpha_i)\phi_i}^p-f(\alpha_i)\phi_i^p\mbox{.}
\end{align}
Then $G_h^p(\phi_i)= A_{-1}(\phi_i)$.
\end{lemma}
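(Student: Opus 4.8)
The plan is to work in discrete Fourier space, where the filter $p$ is multiplication of the $k$-th Fourier coefficient by $\rho(kh)$. Since $G_h^p$ depends linearly on $f$ and $f\in C^r$ gives the Fourier decay $\abs{\hat f_\ell}\le c\norm{f}_{C^r}(1+\abs\ell)^{-r}$, I would first reduce to the case of a single mode $f(\alpha)=e^{i\ell\alpha}$: it is enough to prove the $A_{-1}$ estimates in that case with a constant growing at most like $\abs\ell$, and then sum over $\ell$. For $f(\alpha)=e^{i\ell\alpha}$ the grid product $f(\alpha_i)\phi_i$ has discrete Fourier coefficients equal, up to reduction of the index modulo $N$, to a shift of those of $\phi$, so a direct computation gives
\[
\widehat{G_h^p(\phi)}_k=\bigl[\rho(kh)-\rho(\langle k-\ell\rangle h)\bigr]\,\hat\phi_{\langle k-\ell\rangle},
\]
where $\langle k-\ell\rangle$ is $k-\ell$ reduced modulo $N$ into the fundamental range $\{-N/2+1,\dots,N/2\}$; this equals $k-\ell$ for all $k$ except $O(\abs\ell)$ indices near $\pm N/2$.

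Next, away from those endpoint indices, the mean value theorem and $\rho\in C^1$ give $\abs{\rho(kh)-\rho((k-\ell)h)}\le\norm{\rho'}_\infty\abs\ell\,h$; this one factor of $h$ is exactly what produces a single order of smoothing. Since $\hat\phi\mapsto\hat\phi_{\langle k-\ell\rangle}$ is an $l^2$-isometry, this already yields $\norm{G_h^p(\phi)}_{l^2}\le c\abs\ell\,h\,\norm{\phi}_{l^2}$; composing with the elementary bounds $\norm{D_h}_{l^2\to l^2},\norm{S_h}_{l^2\to l^2}=O(h^{-1})$ then gives $\norm{D_h G_h^p(\phi)}_{l^2}\le c\abs\ell\,\norm{\phi}_{l^2}$ and $\norm{G_h^p(S_h\phi)}_{l^2}\le c\abs\ell\,\norm{\phi}_{l^2}$, which are the remaining two requirements in the definition of $A_{-1}$. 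Summing over $\ell$ against $\abs{\hat f_\ell}$ completes the reduction, and hence the proof.

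The hard part will be controlling the $O(\abs\ell)$ ``wrap-around'' indices $k$ near $\pm N/2$, where $\langle k-\ell\rangle\ne k-\ell$ and the Lipschitz estimate on $\rho$ does not directly apply. There I would invoke property (iii) of the filter, $\rho(\pm\pi)=\rho'(\pm\pi)=0$: for such $k$ both $kh$ and $\langle k-\ell\rangle h$ lie within $O(\abs\ell h)$ of an endpoint $\pm\pi$, so the quadratic vanishing of $\rho$ there (from $\rho\in C^2$) forces $\rho(kh),\rho(\langle k-\ell\rangle h)=O(\abs\ell^2 h^2)$, which makes the contribution of these $O(\abs\ell)$ modes to $\norm{G_h^p(\phi)}_{l^2}^2$, $\norm{D_h G_h^p(\phi)}_{l^2}^2$ and $\norm{G_h^p(S_h\phi)}_{l^2}^2$ of strictly lower order in $h$ than the main term. (Equivalently, for general $f$ one would estimate the kernel $K_{km}=\widehat{f}^{(N)}_{\langle k-m\rangle}\,[\rho(kh)-\rho(mh)]$, with $\widehat{f}^{(N)}$ the discrete Fourier transform of $\{f(\alpha_i)\}$, by a Schur test, splitting into the near-diagonal and wrap-around regimes in the same way.) The remaining steps are routine bookkeeping; full details appear in \cite{CeniPHDthesis:1995}.
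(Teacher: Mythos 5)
The paper does not prove this lemma itself --- it defers entirely to \cite{CeniPHDthesis:1995} --- so there is no in-text argument to compare against; your proposal is essentially the standard proof of this commutator estimate. The single-mode reduction, the identity $\widehat{G_h^p(\phi)}_k=[\rho(kh)-\rho(\langle k-\ell\rangle h)]\hat\phi_{\langle k-\ell\rangle}$, the Lipschitz bound giving the one factor of $h$ that is then traded against $\norm{D_h}_{l^2\to l^2},\norm{S_h}_{l^2\to l^2}=O(h^{-1})$, and the use of $\rho(\pm\pi)=\rho'(\pm\pi)=0$ with $\rho\in C^2$ to kill the $O(\abs{\ell})$ wrap-around modes are exactly the right ingredients, and I checked that the wrap-around contribution stays $O(\abs{\ell})$ after composing with $D_h$ or $S_h$ (using $\abs{\ell}^2h^2\le 2\pi\abs{\ell}h$ for $\abs{\ell}\le N$, and the trivial bounds $\norm{G_h^p}\le c$, $h^{-1}\le c\abs{\ell}$ when $\abs{\ell}\gtrsim N$). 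The only point to tighten is the final summation: at the borderline regularity $r=2$ the pointwise decay $\abs{\hat f_\ell}\le c(1+\abs{\ell})^{-2}$ makes $\sum_\ell\abs{\ell}\,\abs{\hat f_\ell}$ logarithmically divergent as written; you should instead write $\abs{\hat f_\ell}=\abs{\widehat{f''}_\ell}/\ell^2$ and apply Cauchy--Schwarz, $\sum_\ell\abs{\ell}^{-1}\abs{\widehat{f''}_\ell}\le\bigl(\sum\abs{\ell}^{-2}\bigr)^{1/2}\norm{f''}_{L^2}$, which closes the argument for $r\ge 2$ exactly as claimed.
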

In our stability analysis, we will need an analogue of the product rule for discrete derivative operators, proven in \cite{BHL:1996}.
\begin{lemma}\label{product_rule_for_D_h_lemma}
Assume $f(\cdot)\in C^3$ and $w\in l^2$. Then we have:
\begin{equation} \label{eq:discrete_product_rule}
D_h(f(\alpha_i)w_i)=f(\alpha_i)D_h(w_i)+w_i^q f_{\alpha}(\alpha_i)
+ A_{-1}(w_i)\mbox{,}
\end{equation}
where $\hat{w}_k^q=\hat{w}_k q(kh)$, $q(x)=\FD{x}(x\rho(x))$, and $A_0$ is a bounded operator.
\end{lemma}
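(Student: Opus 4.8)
The plan is to work entirely in the discrete Fourier representation, following \cite{BHL:1996}. Since $f\in C^3$ is periodic, write $f(\alpha)=\DPS\sum_{l\in\MBB{Z}}\hat f_l\,e^{il\alpha}$; because $\widehat{f'''}_l=-il^3\hat f_l$ and $f'''$ is continuous (hence in $L^2$), Cauchy--Schwarz gives $\DPS\sum_{l}|l|^2|\hat f_l|\le c\norm{f'''}_{L^2}<\infty$, i.e.\ $b_l:=|l|^2|\hat f_l|$ is an $\ell^1$ sequence. Write $w_i=\DPS\sum_{|k|\le N/2}\hat w_k\,e^{ik\alpha_i}$ in the discrete transform of Section \ref{sec:numerical_method}. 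Evaluating products of exponentials on the grid folds frequencies: $e^{in\alpha_i}=e^{in_\ast\alpha_i}$, where $n_\ast\equiv n\pmod N$ lies in $\{-\tfrac N2+1,\dots,\tfrac N2\}$. Propagating this through the three terms, writing $g(x):=x\rho(x)$ so that $q=g'$, and abbreviating $m:=(l+k)_\ast$, one finds
\[
D_h\bigl(f(\alpha_i)w_i\bigr)-f(\alpha_i)D_h(w_i)-w_i^{q}f_\alpha(\alpha_i)
= i\DPS\sum_{l\in\MBB{Z}}\,\DPS\sum_{|k|\le N/2}\hat f_l\,\hat w_k\,B(l,k)\,e^{i(l+k)_\ast\alpha_i},
\]
where $B(l,k)=m\rho(mh)-k\rho(kh)-l\,q(kh)=\tfrac1h g(mh)-\tfrac1h g(kh)-l\,g'(kh)$.

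The double sum splits into non-aliased modes ($l+k=m$, i.e.\ $|l+k|\le N/2$) and aliased modes ($l+k=m+pN$ with $p\ne0$). For the non-aliased part, Taylor's theorem for $g$ about $kh$ gives $B(l,k)=\tfrac12 l^2 h\,g''(\eta_{l,k})$ for some $\eta_{l,k}$ between $kh$ and $mh$; since $\rho\in C^r$ with $r>2$, the function $g''=q'$ is bounded, so this piece equals $\tfrac{i}{2}h\DPS\sum_{l,k}\hat f_l\hat w_k\,l^2 g''(\eta_{l,k})\,e^{i(l+k)\alpha_i}$. Its $m$-th Fourier coefficient is bounded by $c\,h\,(b\ast|\hat w|)(m)$, so Young's inequality together with $b\in\ell^1$ gives that its $\ell^2$ norm is at most $c\,h\,\norm{w}_{l^2}$; because $|m|\le N/2$, applying $S_h$ or $D_h$ costs at most a factor $c/h$, and replacing $w$ by $S_h w$ costs at most the same, so this piece is $A_{-1}(w_i)$.

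The aliased part is the technical heart of the argument, and the place where the filter is indispensable: with no filter, folding of a single high mode of $w$ against a low mode of $f$ already produces an $O(1)$ rather than $O(h)$ contribution, exactly the phenomenon exhibited in the example preceding Lemma \ref{R_h_lemma}. The estimate proceeds by grouping aliased terms according to the folded index $m$ and using the fact that, since $|k|\le N/2$, the index $l$ must be large unless both $mh$ and $kh$ lie within $O(|l|h)$ of $\pm\pi$. In the first regime the rapid decay $|\hat f_l|\le c|l|^{-3}$ already forces $\hat f_l B(l,k)$ to be negligible; in the second regime one invokes property (iii) of $\rho$, which makes $g$ vanish to second order and $q=g'$ to first order at $\pm\pi$, to bound $|m\rho(mh)|$, $|k\rho(kh)|$ and $|l\,q(kh)|$ each by $O(l^2 h)$. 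Combining these with Young's inequality as in \cite{BHL:1996} shows the aliased contribution is again $A_{-1}(w_i)$, and adding the two pieces proves the lemma. The main obstacle is precisely this aliasing estimate: one must verify that the double zero of $\rho$ at the endpoints exactly compensates for the loss of smoothness incurred by frequency folding, so that the three boundary terms in $B(l,k)$ combine to the order $O(h)$ demanded by $A_{-1}$ rather than merely being small individually.
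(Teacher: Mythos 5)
Your argument is correct. The paper does not prove this lemma itself --- it is quoted from \cite{BHL:1996} --- and your Fourier-side computation (Taylor expansion of $g(x)=x\rho(x)$ for the non-aliased modes, plus the double zero of $\rho$ at $\pm\pi$ from property (iii) to control the folded modes) is essentially a faithful reconstruction of the proof in that reference, with the splitting into small-$l$ and large-$l$ aliased contributions handled correctly.
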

We will apply the following lemmas to obtain expressions for the variation of velocities and other quantities in our problem.
Recall the definition  of error between the exact and numerical solution,
\begin{equation}\label{dot_f_defn}
\dot{f}_i=f_i-f(\alpha_i)\mbox{.}
\end{equation}
It is straightforward to derive the following product rule for errors:
\begin{lemma}\label{product_rule_for_dots_lemma}
Let $\dot{f}_i,\dot{g}_i$ and $f(\alpha_i),g(\alpha_i)$ be as defined in (\ref{dot_f_defn}).
Then $\bigpa{f_i g_i}^{\cdot}=\dot{f}_i g(\alpha_i)+f(\alpha_i) \dot{g}_i+\dot{f}_i \dot{g}_i$.
\end{lemma}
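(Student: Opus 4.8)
The plan is to prove Lemma~\ref{product_rule_for_dots_lemma} by a direct algebraic expansion using the definition (\ref{dot_f_defn}) of the error. From $\dot{f}_i=f_i-f(\alpha_i)$ and $\dot{g}_i=g_i-g(\alpha_i)$ one has $f_i=f(\alpha_i)+\dot{f}_i$ and $g_i=g(\alpha_i)+\dot{g}_i$, and the whole statement is then a one-line computation, so there is essentially no separation between ``plan'' and ``proof'' here.

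First I would apply (\ref{dot_f_defn}) to the discrete product $f_i g_i$ and to the sampled product $f(\alpha_i)g(\alpha_i)$, writing $\bigpa{f_i g_i}^{\cdot}=f_i g_i-f(\alpha_i)g(\alpha_i)$. Next I would substitute $f_i=f(\alpha_i)+\dot{f}_i$ and $g_i=g(\alpha_i)+\dot{g}_i$ into $f_i g_i$, multiply out the four resulting terms, and cancel the $f(\alpha_i)g(\alpha_i)$ term against the one being subtracted. What survives is exactly $\dot{f}_i g(\alpha_i)+f(\alpha_i)\dot{g}_i+\dot{f}_i\dot{g}_i$, which is the claimed identity.

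The main point to be careful about — rather than an obstacle — is purely notational: the superscript dot on $f_i g_i$ denotes the error of the \emph{full} discrete product relative to the pointwise product $f(\alpha_i)g(\alpha_i)$ of the exact solution, not a linearization, so the quadratic cross term $\dot{f}_i\dot{g}_i$ genuinely appears. In later estimates this term is the ``nonlinear'' contribution that is controlled by the smallness of the errors for $t\leq T^{\ast}$, cf.\ (\ref{dot_theta_sigma_alpha_0_assumptions_135}); flagging this explicitly in the proof makes the lemma ready for those applications.
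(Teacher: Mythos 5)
Your proposal is correct and is essentially the same argument as the paper's: the paper adds and subtracts $f_i g(\alpha_i)$ while you substitute $f_i=f(\alpha_i)+\dot{f}_i$ and $g_i=g(\alpha_i)+\dot{g}_i$ and expand, which are the same one-line algebraic computation. Your remark that the quadratic term $\dot{f}_i\dot{g}_i$ is the genuine nonlinear contribution (later controlled via the $T^{\ast}$ bounds) is accurate and consistent with how the lemma is used.
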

The above lemma can be easily extended to products of three or more quantities.
We also have
\begin{lemma}\label{quotient_rule_1_over_f_lemma}
\begin{align}\label{quotient_eqn_f}
\Bigpa{\frac{1}{f_i}}^{\cdot}&=-\frac{\dot{f}_i}{f^2(\alpha_i)}
+\frac{\dot{f}_i^2}{f^2(\alpha_i)\bigpa{f(\alpha_i)+\dot{f}_i}}
\mbox{.}
\end{align}
\end{lemma}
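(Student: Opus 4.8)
The plan is to verify the identity as an exact (untruncated) first-order expansion of the reciprocal; no analysis is involved, only algebra. First I would unfold the left-hand side from the definition (\ref{dot_f_defn}), writing $\Bigpa{\frac{1}{f_i}}^{\cdot}=\frac{1}{f_i}-\frac{1}{f(\alpha_i)}$, and then substitute $f_i=f(\alpha_i)+\dot{f}_i$. Placing the two fractions over the common denominator $f(\alpha_i)f_i=f(\alpha_i)\bigpa{f(\alpha_i)+\dot{f}_i}$ gives
\begin{equation}
\Bigpa{\frac{1}{f_i}}^{\cdot}=\frac{f(\alpha_i)-f_i}{f(\alpha_i)\bigpa{f(\alpha_i)+\dot{f}_i}}=\frac{-\dot{f}_i}{f(\alpha_i)\bigpa{f(\alpha_i)+\dot{f}_i}}\mbox{.}
\end{equation}

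The second step is to isolate the part linear in $\dot{f}_i$ by adding and subtracting $\dot{f}_i/f^2(\alpha_i)$,
\begin{equation}
\frac{-\dot{f}_i}{f(\alpha_i)\bigpa{f(\alpha_i)+\dot{f}_i}}=-\frac{\dot{f}_i}{f^2(\alpha_i)}+\dot{f}_i\Bigpa{\frac{1}{f^2(\alpha_i)}-\frac{1}{f(\alpha_i)\bigpa{f(\alpha_i)+\dot{f}_i}}}\mbox{,}
\end{equation}
and then simplifying the bracketed difference once more over a common denominator; it collapses to $\dot{f}_i/\bigpa{f^2(\alpha_i)(f(\alpha_i)+\dot{f}_i)}$, so the second summand equals exactly $\dot{f}_i^2/\bigpa{f^2(\alpha_i)(f(\alpha_i)+\dot{f}_i)}$, which is the claimed quadratic remainder. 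This completes the identity; it can be spelled out in two or three lines.

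There is essentially no obstacle here: the statement is a purely algebraic identity, valid whenever $f(\alpha_i)\neq 0$ and $f_i=f(\alpha_i)+\dot{f}_i\neq 0$, and the only point worth a remark is that these denominators do not vanish. In every application of this lemma, $f$ is a quantity (such as $\sigma$, or $e^{i\theta}$) that is bounded away from zero --- for $\sigma$ this follows from the hypothesis $\min_{0\le t\le T}s_{\alpha}(t)>c$ of Theorem \ref{MainThm} together with $\norm{\dot{\sigma}}_{\infty}\le h^3$ from (\ref{dot_theta_sigma_alpha_0_assumptions_135}) --- so $f(\alpha_i)+\dot{f}_i$ also stays bounded away from zero and the remainder is genuinely of size $O(|\dot{f}_i|^2)$, i.e. an $A_0(\dot{f}_i)$ contribution (indeed $A_{-s}$-negligible) in the subsequent energy estimates. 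The identical computation goes through verbatim for the complex-valued $f$ that occurs later.
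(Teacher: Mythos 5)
Your proof is correct and follows essentially the same route as the paper: compute $\frac{1}{f_i}-\frac{1}{f(\alpha_i)}=-\frac{\dot f_i}{f(\alpha_i)(f(\alpha_i)+\dot f_i)}$ and then split off the linear part by adding and subtracting $\dot f_i/f^2(\alpha_i)$. Your remarks on the non-vanishing of the denominators simply make explicit what the paper leaves implicit.
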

\begin{proof}
\begin{align}
\Bigpa{\frac{1}{f_i}}^{\cdot}&=\frac{1}{f_i}-\frac{1}{f(\alpha_i)}
=\frac{f(\alpha_i)-f_i}{f_i f(\alpha_i)}
=-\frac{\dot{f}_i}{f(\alpha_i)\bigpa{f(\alpha_i)+\dot{f}_i}} \label{one_over_fdot}
\mbox{,}
\end{align}
where in the last equality, we have eliminated $f_i$ using $f_i=f(\alpha_i)+\dot{f}_i$.
After decomposing the right hand side of (\ref{one_over_fdot}) into a sum of linear and nonlinear terms in $\dot{f}_i$, we obtain the result. 
\end{proof}
We will need the following results on $\dot{\sigma}=\sigma-s_{\alpha}$.
\begin{lemma}\label{s_alpha_err_int_prod_lemma}
Let $f$ be a smooth function and $\ip{f(\cdot)}_h=0$. Then:
\begin{align}\label{s_alpha_err_int_prod_eqn}
S_h^{-1}\bigpa{f(\alpha_i)\dot{\sigma}}
&=\dot{\sigma} S_h^{-1}\bigpa{f(\alpha_i)}
=A_{-s}(\dot{\sigma}_i)\mbox{,}
\end{align}
where $A_{-s}(\dot{\sigma}_i)$ is here interpreted for the spatially independent $\dot{\sigma}$ as
\begin{align}
A_{-s}(\dot{\sigma}_i)
=\dot{\sigma} g(\alpha_i)
\end{align}
for some smooth function $g$.
\end{lemma}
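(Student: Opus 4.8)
The plan is to exploit the fact that $\dot{\sigma}=\sigma(t)-s_{\alpha}(t)$ is spatially independent, so that it factors through the linear operator $S_h^{-1}$, and then to recognize what is left as (essentially) the grid restriction of a fixed smooth function. First I would note that, since $\dot{\sigma}$ is constant in $\alpha$, the discrete Fourier coefficients satisfy $\widehat{(f(\cdot)\dot{\sigma})}_k=\dot{\sigma}\,\hat{f}_k$, so directly from the definition (\ref{Int_h_defn}) of $S_h^{-1}$,
\[
S_h^{-1}\bigpa{f(\alpha_i)\dot{\sigma}}=\dot{\sigma}\,S_h^{-1}\bigpa{f(\alpha_i)},
\]
which is the first asserted equality. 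The hypothesis $\ip{f(\cdot)}_h=0$ is not needed for this identity, but combined with Lemma \ref{trapezoidal_rule_estimate} it gives $\ip{f}=\mbox{O}(h^{s+1})$, so $f$ may be treated as mean zero up to a negligible correction, which is convenient below.

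Next I would set $g(\alpha):=\D_{\alpha}^{-1}f(\alpha)$, the smooth periodic antiderivative (\ref{antideriv}) of the smooth, essentially mean-zero function $f$ (so $g\in C^m$ when $f\in C^{m-1}$). By Lemma \ref{pseudo_int_est_lemma}, $S_h^{-1}(f(\alpha_i))=g(\alpha_i)+\mbox{O}(h^{s-\frac12})$, the exponent being governed by the regularity of $f$, which is near $m$. Multiplying by $\dot{\sigma}$ gives
\[
S_h^{-1}\bigpa{f(\alpha_i)\dot{\sigma}}=\dot{\sigma}\,g(\alpha_i)+\mbox{O}(h^{s-\frac12})\,\dot{\sigma},
\]
so the claim reduces to checking that the right-hand side matches the meaning of $A_{-s}(\dot{\sigma}_i)$ declared in the statement, namely $\dot{\sigma}$ times a smooth function evaluated on the grid.

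For the last step I would verify the defining estimates of an $s$-th order smoothing operator in this degenerate, spatially independent case. Since $\dot{\sigma}$ is constant, $\norm{\dot{\sigma}}_{l^2}=\sqrt{2\pi}\,\abs{\dot{\sigma}}$ and $S_h^k(\dot{\sigma})=0$ for $k\geq 1$; hence $A_{-s}(S_h^k\dot{\sigma})$ equals $\dot{\sigma}\,g(\alpha_i)$ for $k=0$ and $0$ for $k\geq 1$, and in either case its $l^2$ norm is at most $\norm{g}_{\infty}\norm{\dot{\sigma}}_{l^2}$. Likewise $D_h^k(\dot{\sigma}\,g(\alpha_i))=\dot{\sigma}\,(D_h^k g)(\alpha_i)$, and since $g$ is smooth, Lemma \ref{S_h_estimate}, which also covers $D_h$, applied iteratively gives $D_h^k g(\alpha_i)=g^{(k)}(\alpha_i)+\mbox{O}(h^{m-k-\frac12})$, so $\norm{D_h^k(\dot{\sigma}g)}_{l^2}\leq c\,\norm{\dot{\sigma}}_{l^2}$ uniformly in $h$ for $0\leq k\leq s$ (losing only a fixed, harmless number of powers of $h$ per derivative, which is affordable because $s$ is near $m$). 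Finally the residual $\mbox{O}(h^{s-\frac12})\dot{\sigma}$ is of the form $h^{s-\frac12}A_0(\dot{\sigma}_i)$, hence is itself a smoothing term of the required order; absorbing it into the generic $A_{-s}$, at the cost of lowering $s$ by a fixed amount, completes the identification $S_h^{-1}(f(\alpha_i)\dot{\sigma})=A_{-s}(\dot{\sigma}_i)$ with $A_{-s}(\dot{\sigma}_i)=\dot{\sigma}\,g(\alpha_i)$. The only real subtlety is bookkeeping: confirming that the nonstandard, spatially independent reading of $A_{-s}$ in the statement is genuinely compatible with the derivative bound $\norm{D_h^k(\cdot)}_{l^2}\leq c\norm{\dot{\sigma}}_{l^2}$ for all $0\leq k\leq s$ — which works precisely because $g$ is smooth, so its filtered derivatives are $h$-uniformly bounded, and $\dot{\sigma}$ is a scalar, so that applying $D_h^k$ cannot produce growth.
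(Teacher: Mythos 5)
Your proposal is correct and follows the same route as the paper's (one-line) proof: factor the spatially constant $\dot{\sigma}$ out of the linear operator $S_h^{-1}$, then identify $S_h^{-1}(f(\alpha_i))$ as a smooth grid function $g(\alpha_i)$ via the accuracy of the pseudo-spectral antiderivative. The extra bookkeeping you do — verifying the $D_h^k$ bounds for the degenerate, spatially independent reading of $A_{-s}$ — is consistent with what the paper leaves implicit.
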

\begin{proof}
The relation (\ref{s_alpha_err_int_prod_eqn}) follows from the spatial independence of $\dot{\sigma}$. The second equality follows from the smoothness of $f$.
\end{proof}
Similarly we have from Lemma \ref{S_h_estimate}
\begin{lemma}\label{num_approx_S_h_f_dot_sigma_decomp_lemma}
Let $f$ be a smooth function. Then
\begin{align}\label{num_approx_S_h_f_dot_sigma_decomp_eqn_165}
S_h\bigpa{f(\alpha_i)\dot{\sigma}}
&=f_{\alpha}(\alpha_i)\dot{\sigma}+O(h^s)=A_{-s}(\dot{\sigma}_i)+\mbox{O}(h^s)\mbox{,}
\end{align}
and the same is true for $D_h$ instead of $S_h$.
\end{lemma}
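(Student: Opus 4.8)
The plan is to use the single structural feature that makes this lemma almost immediate: $\dot{\sigma}=\sigma-s_{\alpha}$ is \emph{spatially constant} (both $\sigma(t)$ and $s_{\alpha}(t)$ are independent of the grid index). First I would invoke linearity of $S_h$ to pull $\dot{\sigma}$ out,
\begin{align}
S_h\bigpa{f(\alpha_i)\dot{\sigma}}=\dot{\sigma}\, S_h f(\alpha_i),
\end{align}
and then apply Lemma \ref{S_h_estimate} to $f$. Since $f$ is a generic high-regularity smooth function, I may apply that lemma at regularity level $s+2$, which gives $\bigabs{S_h f(\alpha_i)-f_{\alpha}(\alpha_i)}\le c h^{s+\frac12}\norm{f}_{s+2}=\mbox{O}(h^s)$. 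Hence $S_h\bigpa{f(\alpha_i)\dot{\sigma}}=f_{\alpha}(\alpha_i)\dot{\sigma}+\dot{\sigma}\,\mbox{O}(h^s)$, and because $\abs{\dot{\sigma}}$ is bounded for $t\le T^{\ast}$ (indeed $\norm{\dot{\sigma}}_{\infty}\le h^3$ by (\ref{dot_theta_sigma_alpha_0_assumptions_135}), but boundedness is all that is needed) the remainder $\dot{\sigma}\,\mbox{O}(h^s)$ is absorbed into $\mbox{O}(h^s)$. This establishes the first equality; the $D_h$ version is word-for-word the same, using the last sentence of Lemma \ref{S_h_estimate}.

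For the second equality I would observe that $f_{\alpha}$ is itself a smooth function, so $f_{\alpha}(\alpha_i)\dot{\sigma}$ has precisely the form $\dot{\sigma}\, g(\alpha_i)$ with $g=f_{\alpha}$, which by the convention introduced in Lemma \ref{s_alpha_err_int_prod_lemma} for the spatially independent error $\dot{\sigma}$ is exactly what is meant by $A_{-s}(\dot{\sigma}_i)$. If a direct check of the defining inequalities of $A_{-s}$ is wanted: $D_h^k\bigpa{\dot{\sigma}\, g(\alpha_i)}=\dot{\sigma}\, D_h^k g(\alpha_i)$ has $l^2$-norm at most $\abs{\dot{\sigma}}\,\norm{D_h^k g}_{l^2}\le c\abs{\dot{\sigma}}=c\,\norm{\dot{\sigma}}_{l^2}$ for $0\le k\le s$ (using $\norm{\dot{\sigma}}_{l^2}=\sqrt{2\pi}\,\abs{\dot{\sigma}}$ and the smoothness of $g$ to bound $\norm{D_h^k g}_{l^2}$ uniformly in $h$), while $S_h^k\dot{\sigma}=0$ for $k\ge 1$ since $\dot{\sigma}$ is constant, so $\norm{A_{-s}(S_h^k\dot{\sigma})}_{l^2}=0$ trivially for $k\ge 1$.

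I do not anticipate any genuine obstacle here; the one point requiring a little care is the bookkeeping of regularity indices, so that the error coming out of Lemma \ref{S_h_estimate} really is $\mbox{O}(h^s)$ rather than $\mbox{O}(h^{s-\frac12})$ --- which is handled, as above, by spending one extra derivative of $f$, legitimate since $f$ is a generic smooth function with as much regularity as we need (cf.\ the remarks preceding the preliminary lemmas). The remainder is purely formal.
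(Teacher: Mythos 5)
Your proposal is correct and takes essentially the same route the paper intends: the paper offers no written proof beyond the phrase ``Similarly we have from Lemma \ref{S_h_estimate},'' and your argument --- pulling the spatially constant $\dot{\sigma}$ out of $S_h$, invoking Lemma \ref{S_h_estimate} for $f$, and identifying $f_{\alpha}(\alpha_i)\dot{\sigma}$ as $A_{-s}(\dot{\sigma}_i)$ via the convention of Lemma \ref{s_alpha_err_int_prod_lemma} --- is exactly that intended argument, with the regularity bookkeeping made explicit.
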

We will make repeated use of the following result from \cite{CH:1998}:
\begin{lemma}\label{tan_vector_est_lemma}
For $\dot{\theta},\dot{\sigma}$ satisfying $\norm{\dot{\theta}}_{\infty}\leq h^3, \norm{\dot{\sigma}}_{\infty}<h^3$, then
\begin{align}\label{s_alpha_e_i_theta_err_eqn_simp}
\bigpa{\sigma e^{i\theta}}_i^{\cdot}
&=i s_{\alpha} e^{i\theta(\alpha_i)}\dot{\theta}_i
+e^{i\theta(\alpha_i)} \dot{\sigma}_i
+A_{-3}(\dot{\theta}_i)=\mbox{O}(h^3)\mbox{.}
\end{align}
\end{lemma}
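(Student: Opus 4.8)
The plan is to expand the variation directly. Write $(\sigma e^{i\theta})_i^{\cdot} = \sigma_i e^{i\theta_i} - s_{\alpha} e^{i\theta(\alpha_i)}$ and substitute $\sigma_i = s_{\alpha} + \dot{\sigma}_i$, $\theta_i = \theta(\alpha_i) + \dot{\theta}_i$. Using $e^{i\theta_i} = e^{i\theta(\alpha_i)} e^{i\dot{\theta}_i}$ and the Taylor expansion $e^{i\dot{\theta}_i} = 1 + i\dot{\theta}_i + \rho_i$, where $\rho_i := e^{i\dot{\theta}_i} - 1 - i\dot{\theta}_i$ satisfies the elementary Taylor-remainder bound $\abs{\rho_i} \leq \tfrac12 \abs{\dot{\theta}_i}^2$, one obtains $\sigma_i e^{i\theta_i} = (s_{\alpha} + \dot{\sigma}_i)\, e^{i\theta(\alpha_i)}\,(1 + i\dot{\theta}_i + \rho_i)$. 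Multiplying this out and subtracting $s_{\alpha} e^{i\theta(\alpha_i)}$ peels off the two linear contributions $i s_{\alpha} e^{i\theta(\alpha_i)} \dot{\theta}_i$ and $e^{i\theta(\alpha_i)} \dot{\sigma}_i$ and leaves the remainder $\mathcal{R}_i = s_{\alpha} e^{i\theta(\alpha_i)} \rho_i + i\, \dot{\sigma}_i e^{i\theta(\alpha_i)} \dot{\theta}_i + \dot{\sigma}_i e^{i\theta(\alpha_i)} \rho_i$, which is at least quadratic in $(\dot{\theta}_i, \dot{\sigma}_i)$.

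The next step is to identify $\mathcal{R}_i$ as a third-order smoothing operator $A_{-3}(\dot{\theta}_i)$. Using $\abs{\rho_i} \leq \tfrac12\abs{\dot{\theta}_i}^2 \leq \tfrac12 \norm{\dot{\theta}}_{\infty}\abs{\dot{\theta}_i}$ together with $\norm{\dot{\theta}}_{\infty} \leq h^3$, the first term is bounded pointwise by $C h^3 \abs{\dot{\theta}_i}$; similarly $\norm{\dot{\sigma}}_{\infty} \leq h^3$ makes the second term bounded by $h^3 \abs{\dot{\theta}_i}$ and the third even smaller, so $\norm{\mathcal{R}}_{l^2} \leq C h^3 \norm{\dot{\theta}}_{l^2}$. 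Since $s_{\alpha}$ and $\dot{\sigma}$ are spatially constant and $e^{i\theta(\alpha)}$ is smooth, applying up to three discrete derivatives to $\mathcal{R}_i$ costs at most a factor $h^{-3}$ by the inverse inequality $\norm{D_h w}_{l^2} \leq C h^{-1}\norm{w}_{l^2}$ (which follows from $\abs{ik\rho(kh)} \leq C h^{-1}$ and discrete Parseval), giving $\norm{D_h^k \mathcal{R}}_{l^2} \leq C h^{3-k}\norm{\dot{\theta}}_{l^2} \leq C\norm{\dot{\theta}}_{l^2}$ for $0 \leq k \leq 3$; this is exactly the defining property of $A_{-3}$, and equivalently one may just invoke the principle $h^3 A_0(\dot{\theta}_i) = A_{-3}(\dot{\theta}_i)$ recorded in the remark following the definition of $A_{-n}$. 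Finally, the claim that the whole expression is $\mathrm{O}(h^3)$ follows by bounding in the sup norm: the two linear terms are controlled by $s_{\alpha}\norm{\dot{\theta}}_{\infty} \leq Ch^3$ and $\norm{\dot{\sigma}}_{\infty} \leq h^3$, and $\mathcal{R}_i$ is of order $h^6$.

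I do not anticipate a serious obstacle, since the core of the argument is a one-line Taylor expansion of the exponential. The only place requiring care is the bookkeeping that promotes the quadratic remainder $\mathcal{R}_i$ from a merely bounded ($A_0$) operator to a genuine third-order smoothing operator $A_{-3}$: this is precisely where the running hypotheses $\norm{\dot{\theta}}_{\infty}, \norm{\dot{\sigma}}_{\infty} \leq h^3$ are used, converting a pointwise quadratic estimate into an extra factor $h^3$, and where one must recall that the operator class $A_{-3}$ is also required to absorb up to three discrete derivatives, which is handled by the inverse inequality together with the spatial constancy of $s_{\alpha}$ and $\dot{\sigma}$.
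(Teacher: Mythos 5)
Your proposal is correct and follows essentially the same route as the paper's proof: the same Taylor expansion of $e^{i\dot{\theta}_i}$ with a quadratic remainder, the same use of the sup-norm hypotheses to convert the quadratic terms into $h^3 A_0(\dot{\theta}_i)=A_{-3}(\dot{\theta}_i)$, and the same sup-norm bound for the final $\mathrm{O}(h^3)$ claim. Your extra remark justifying why $h^3A_0=A_{-3}$ absorbs up to three discrete derivatives (via the inverse inequality and the spatial constancy of $\dot{\sigma}$) is a correct elaboration of the paper's Remark 8.2, which the paper simply invokes.
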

\begin{proof}
We have
\begin{align}\label{s_alpha_e_i_theta_err_eqn}
\bigpa{\sigma e^{i\theta}}_i^{\cdot}
&=\sigma e^{i\theta_i}-s_{\alpha}e^{i\theta(\alpha_i)}
\nonumber\\
&=s_{\alpha}e^{i\theta(\alpha_i)}\bigpa{e^{i\dot{\theta}_i}-1}
+\bigpa{\sigma-s_{\alpha}}e^{i\theta(\alpha_i)}e^{i\dot{\theta}_i}
\nonumber\\
&=s_{\alpha}e^{i\theta(\alpha_i)}\bigpa{i\dot{\theta}_i+r(\dot{\theta}_i)}
+\dot{\sigma}_i e^{i\theta(\alpha_i)}\bigpa{1+i\dot{\theta}_i+r(\dot{\theta}_i)},
\end{align}
where $\norm{r(\dot{\theta})}_{l^2}\leq \norm{\dot{\theta}^2}_{l^2}\leq h^3\norm{\dot{\theta}}_{l^2}$.
The first equality in (\ref{s_alpha_e_i_theta_err_eqn_simp}) follows by using $\norm{\dot{\sigma}}_{\infty}\leq h^3$ and the above estimate on $\norm{r(\dot{\theta})}_{l^2}$ to show that the nonlinear terms in the variation are $A_{-3}(\dot{\theta})$.
The second equality in (\ref{s_alpha_e_i_theta_err_eqn_simp}) readily follows from (\ref{s_alpha_e_i_theta_err_eqn}), the bounds $\norm{\dot{\theta}}_{\infty}\leq h^3, \norm{\dot{\sigma}}_{\infty}<h^3$, and the above estimate on $\norm{r(\dot{\theta})}_{l^2}$.
\end{proof}

In estimating the variations, we will make use of the discrete Parseval equality.
First, recall that the $l^2$ inner product is
\begin{align}
\bigpa{f,g}_h=h\DPS\sum_{i=-\frac{N}{2}+1}^{\frac{N}{2}}
\OVL{f_i} g_i\mbox{,}
\end{align}
for $f,g\in l^2$. Then, we have
\begin{lemma}\label{IP_prod_lemma}
(Discrete Parseval's equality).
Let $f,g\in l^2$.
Then
\begin{align}
\bigpa{f,g}_h=2\pi\DPS\sum_{k=-\frac{N}{2}+1}^{\frac{N}{2}}
\OVL{\hat{f}_k} \hat{g}_{k}\mbox{.}
\end{align}
In particular, when $g_i=f_i$
\begin{align}
\norm{f}_{l^2}^2=2\pi\DPS\sum_{k=-\frac{N}{2}+1}^{\frac{N}{2}}
\abs{\hat{f}_k}^2\mbox{.}
\end{align}
\end{lemma}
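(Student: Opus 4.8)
The plan is to prove this directly from the definition of the discrete Fourier transform (\ref{APPFT})--(\ref{APPIFT}) together with the discrete orthogonality of the complex exponentials on the grid $\{\alpha_j\}$. First I would expand both $f_i$ and $g_i$ using the inverse transform, writing $f_i=\DPS\sum_{k}\hat{f}_k e^{ik\alpha_i}$ and $g_i=\DPS\sum_{l}\hat{g}_l e^{il\alpha_i}$, with both sums over $-\frac{N}{2}+1\leq k,l\leq\frac{N}{2}$, and substitute into $\bigpa{f,g}_h=h\DPS\sum_{i=-\frac{N}{2}+1}^{\frac{N}{2}}\OVL{f_i}g_i$. Since every sum is finite, the order of summation may be freely interchanged, giving
\begin{align}
\bigpa{f,g}_h=h\DPS\sum_{k}\DPS\sum_{l}\OVL{\hat{f}_k}\,\hat{g}_l\DPS\sum_{i=-\frac{N}{2}+1}^{\frac{N}{2}}e^{i(l-k)\alpha_i}\mbox{.}
\end{align}

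The key step is the evaluation of the inner grid sum. With $\alpha_i=ih$ and $h=\frac{2\pi}{N}$, the quantity $\DPS\sum_{i=-\frac{N}{2}+1}^{\frac{N}{2}}e^{i(l-k)\alpha_i}$ is a geometric sum of the $N$-th root of unity $e^{2\pi i(l-k)/N}$ over $N$ consecutive integers, hence a complete residue system mod $N$; it equals $N$ when $l-k\equiv 0\pmod N$ and $0$ otherwise. Because $k$ and $l$ both lie in $\{-\frac{N}{2}+1,\dots,\frac{N}{2}\}$, the difference obeys $\abs{l-k}\leq N-1<N$, so $l-k\equiv 0\pmod N$ forces $l=k$; there is no aliasing or wraparound. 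This collapses the double sum, yielding $\bigpa{f,g}_h=hN\DPS\sum_{k}\OVL{\hat{f}_k}\hat{g}_k=2\pi\DPS\sum_{k}\OVL{\hat{f}_k}\hat{g}_k$ since $hN=2\pi$, which is the claimed identity. The ``in particular'' statement is then immediate on taking $g_i=f_i$, since $\norm{f}_{l^2}^2=\bigpa{f,f}_h$ and $\OVL{\hat{f}_k}\hat{f}_k=\abs{\hat{f}_k}^2$.

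The only point requiring any care --- the closest thing to an obstacle --- is the bookkeeping of index ranges: one must verify that the asymmetric index set $\{-\frac{N}{2}+1,\dots,\frac{N}{2}\}$ is a complete residue system mod $N$ so that the geometric sum over $i$ evaluates cleanly, and that differences of two such indices never reach $\pm N$ so that no distinct modes are identified. Both facts are elementary, and no auxiliary results from the excerpt are needed; the entire argument is a short orthogonality computation.
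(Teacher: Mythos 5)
Your proof is correct. The paper states this lemma without proof (it is the standard discrete Parseval identity), and your argument — expanding $f_i$ and $g_i$ via the inverse transform and collapsing the double sum using the discrete orthogonality $\sum_i e^{i(l-k)\alpha_i}=N\delta_{kl}$ for $k,l$ in the range $\{-\frac{N}{2}+1,\dots,\frac{N}{2}\}$ — is exactly the computation one would supply, with the index bookkeeping (no wraparound since $\abs{l-k}\leq N-1$) handled correctly.
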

A simple consequence of Parseval's equality is that derivatives can be transferred to a smooth function, similar to integration by parts.
\begin{lemma}\label{product_rule_f_S_h_g_lemma}
Let $f(\cdot)\in C^{s+1}$ and $g\in l^2$. Then
\begin{align}
\bigpa{f(\cdot),S_h g}_h=-\bigpa{S_h f(\cdot), g}_h
=A_{-s}(g)\mbox{.}
\end{align}
The same result holds for the discrete average:
\begin{align}
\ip{f(\cdot)S_h g}_h=-\ip{S_h f(\cdot) g}_h
= A_{-s}(g)\mbox{.}
\end{align}
Thus, when considered as an operator $Q$ on $g$,
\begin{align}
Q(g)\equiv\ip{f(\cdot)g}_h =  A_{-(s+1)}(g)\mbox{.}
\end{align}
\end{lemma}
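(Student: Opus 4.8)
The plan is to derive both identities purely from the discrete Parseval equality (Lemma~\ref{IP_prod_lemma}) together with the Fourier-space definition (\ref{S_h_defn}) of $S_h$, and then read off the $A_{-s}$ claim from the regularity of $f$. First I would prove the discrete integration-by-parts identity. Since $\widehat{(S_h g)}_k=ik\hat g_k$ for $|k|\le\frac{N}{2}-1$ and $\widehat{(S_h g)}_{N/2}=0$, Parseval gives
\[
(f(\cdot),S_h g)_h = 2\pi\sum_{k=-\frac{N}{2}+1}^{\frac{N}{2}}\overline{\hat f_k}\,\widehat{(S_h g)}_k = 2\pi\sum_{k=-\frac{N}{2}+1}^{\frac{N}{2}-1}(ik)\,\overline{\hat f_k}\,\hat g_k,
\]
and the identical sum results from $-(S_h f(\cdot),g)_h$ because $\overline{\widehat{(S_h f)}_k}=\overline{ik\hat f_k}=-ik\,\overline{\hat f_k}$ and the $k=\frac{N}{2}$ term is again annihilated by the zeroing convention in (\ref{S_h_defn}). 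Hence $(f(\cdot),S_h g)_h=-(S_h f(\cdot),g)_h$. The averaged version follows from the same computation on the zeroth Fourier mode of the product $f(\cdot)S_h g$, or equivalently from $\langle pq\rangle_h=\frac{1}{2\pi}(\overline p,q)_h$ combined with $S_h\overline f=\overline{S_h f}$. The essential point is that the vanishing of the $k=\frac{N}{2}$ mode of $S_h$ removes any discrete boundary term.

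Next I would verify that these equal $A_{-s}(g)$. Because $f\in C^{s+1}[-\pi,\pi]$, the discrete functions $S_h^{\,j}f$ — and, since the filter multiplier satisfies $|\rho|\le 1$, also $D_h^{\,j}f$ — are bounded in $\norm{\cdot}_{l^2}$ uniformly in $h$ for $0\le j\le s+1$; this is the Fourier-decay/aliasing estimate already underlying Lemma~\ref{S_h_estimate}, and yields $\norm{S_h^{\,j}f}_{l^2}\le c\norm{f}_{s+1}$. Consequently, for any discrete $\phi$ and $0\le k\le s$, applying the integration-by-parts identity $k$ more times gives
\[
\bigabs{(S_h f(\cdot),S_h^{\,k}\phi)_h}=\bigabs{(S_h^{\,k+1}f(\cdot),\phi)_h}\le c\norm{f}_{s+1}\norm{\phi}_{l^2},
\]
with the same bound holding for $D_h$ in place of $S_h$. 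These are exactly the defining inequalities of an $s$-th order smoothing operator acting on $g$ (here the output $(S_h f(\cdot),g)_h$ is a scalar, interpreted as a linear functional of $g$), so $(f(\cdot),S_h g)_h=-(S_h f(\cdot),g)_h=A_{-s}(g)$, and identically for $\langle f(\cdot)S_h g\rangle_h$.

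Finally, for $Q(g)\equiv\langle f(\cdot)g\rangle_h$ I would rerun the argument starting from $f$ rather than $S_h f$, so that one additional derivative can be moved: split $g$ into its $k=0$ and $k=\frac{N}{2}$ modes — each contributing a scalar bounded by $c\norm{f}_{s+1}\norm{g}_{l^2}$, and both annihilated when $g$ is replaced by $S_h^{\,k}\phi$ with $k\ge 1$ — plus a mean-zero remainder on which $\bigabs{\langle f(\cdot)S_h^{\,k}\phi\rangle_h}=\bigabs{\langle S_h^{\,k}f(\cdot)\phi\rangle_h}\le c\norm{f}_{s+1}\norm{\phi}_{l^2}$ for $0\le k\le s+1$. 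This gives $Q(g)=A_{-(s+1)}(g)$.

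The only point demanding care — not a genuine obstacle — is the bookkeeping of the top mode $k=\frac{N}{2}$ forced by the asymmetry of the discrete Fourier transform, which must be tracked through the zeroing convention (\ref{S_h_defn}) so that the integration-by-parts step leaves no residual term; beyond that, everything reduces to Parseval and the standard smooth-function Fourier estimates already used for Lemma~\ref{S_h_estimate}.
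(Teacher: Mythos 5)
Your proposal is correct and follows essentially the same route as the paper's proof, which consists of exactly the Parseval computation you give for the integration-by-parts identity (the paper then simply states that the averaged version "follows similarly" and leaves the $A_{-s}$ and $A_{-(s+1)}$ claims implicit). Your additional verification of the smoothing-operator bounds via iterated integration by parts and $\norm{S_h^{\,j}f}_{l^2}\leq c\norm{f}_{s+1}$, and your bookkeeping of the $k=\frac{N}{2}$ mode, are correct elaborations of what the paper takes for granted rather than a different method.
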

\begin{proof}
By Parseval's equality
\begin{align}
\bigpa{f(\cdot),S_h g}_h&=2\pi\DPS\sum_{k=-\frac{N}{2}+1}^{\frac{N}{2}}
\OVL{\hat{f}_k} (ik)\hat{g}_k\nonumber \\
&=2\pi\DPS\sum_{k=-\frac{N}{2}+1}^{\frac{N}{2}}
\OVL{(-ik)\hat{f}_k} \hat{g}_k\nonumber \\
&=-\bigpa{S_h f(\cdot),g}_h\mbox{.}
\end{align}
The result on discrete average follows similarly.
\end{proof}
\noindent
Note these results also hold for $D_h$.
We will also make use of the fact that, for smooth $f$,  $S_h^{-1}(f(\alpha_i)\phi_i)$ is a smoothing operator on $\phi_i$.
The proof of this includes, as a by-product, an `integration by parts' formula for $S_h^{-1}$.
\begin{lemma}\label{Int_product_rule_f_S_h_g_lemma}
Let $f\in C^3$, $\phi \in l^2$, and assume $f(\alpha_i)\phi_i$ has zero mean, i.e., $\ip{f(\cdot)\phi}_h\equiv 0$. Then
\begin{align}\label{num_approx_Int_prod_f_S_h_phi_1}
S_h^{-1}(f(\alpha_i)S_h \phi_i)
=-S_h^{-1}(S_h f(\alpha_i) \phi_i)
+f(\alpha_i)\phi_i
+A_0(\phi_i)\mbox{,}
\end{align}
and hence
\begin{align}\label{num_approx_Int_prod_phi_S_h_f_A}
S_h^{-1}(f(\alpha_i) \phi_i)
=A_{-1}(\phi_i)\mbox{.}
\end{align}
\end{lemma}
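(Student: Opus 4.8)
The plan is to work in discrete Fourier space and reduce to a single Fourier mode of $f$. Writing the grid function $f(\alpha_i)=\sum_{|l|\le N/2}\hat f_l^{\,g}e^{il\alpha_i}$ with $\hat f_l^{\,g}$ its discrete Fourier coefficients, smoothness of $f$ gives $\sum_l|\hat f_l^{\,g}|\le c$, $|\hat f_{\pm N/2}^{\,g}|=\mbox{O}(N^{-3})$, and $\sum_{|l|>N/4}|\hat f_l^{\,g}|=\mbox{O}(N^{-2})$. By linearity of $S_h$, $S_h^{-1}$, and multiplication by $f$, the combination $E_i:=S_h^{-1}(f(\alpha_i)S_h\phi_i)-f(\alpha_i)\phi_i+S_h^{-1}(S_hf(\alpha_i)\phi_i)$ splits as $E_i=\sum_l\hat f_l^{\,g}E_i^{(l)}$, where $E_i^{(l)}$ is the same expression with $f$ replaced by $e^{il\alpha}$; for $|l|\le N/2-1$ we have $S_he^{il\alpha}=ile^{il\alpha}$, while the two top modes $l=\pm N/2$ (with $S_he^{il\alpha}=0$, per (\ref{S_h_defn})) will be absorbed using their $\mbox{O}(N^{-3})$ coefficients together with the crude bound $\norm{E_i^{(\pm N/2)}}_{l^2}\le cN\norm{\phi}_{l^2}$. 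So the core of the proof of (\ref{num_approx_Int_prod_f_S_h_phi_1}) is a single-mode estimate on $E_i^{(l)}$ that is summable in $l$.

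For the single mode, set $m=(k-l)\bmod N\in\{-N/2+1,\dots,N/2\}$ and use $\widehat{(e^{il\alpha_i}\psi_i)}_k=\hat\psi_{(k-l)\bmod N}$ to compute, for $k\ne 0,N/2$, that the $k$-th Fourier coefficient of $E_i^{(l)}$ equals $\tfrac{1}{ik}\widehat{(S_h\phi)}_m-\hat\phi_m+\tfrac{l}{k}\hat\phi_m$. When $|m|\le N/2-1$, so $\widehat{(S_h\phi)}_m=im\hat\phi_m$, this is $\tfrac{m+l-k}{k}\hat\phi_m$; the key algebraic point is that whenever $k-l$ already lies in $\{-N/2+1,\dots,N/2\}$ (no aliasing wraparound), $m=k-l$ and this coefficient \emph{vanishes identically}. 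Hence $\widehat{E^{(l)}}_k$ is nonzero only at the $\mbox{O}(|l|)$ indices $k$ within $|l|$ of one endpoint $\pm N/2$ where $m=k-l\mp N$, at the one or two indices with $m=N/2$ (where $\widehat{(S_h\phi)}_m=0$ by (\ref{S_h_defn})), at $k=N/2$, and at $k=0$ (where $\widehat{E^{(l)}}_0=-\hat\phi_{(-l)\bmod N}$, and $\sum_l\hat f_l^{\,g}\widehat{E^{(l)}}_0=-\widehat{(f\phi)}_0=0$ under the zero-mean hypothesis). In each exceptional case $|\widehat{E^{(l)}}_k|\le \tfrac{N}{|k|}|\hat\phi_{m(k)}|\le\tfrac{N}{N/2-|l|}|\hat\phi_{m(k)}|$ with the $m(k)$ distinct, so Parseval (Lemma \ref{IP_prod_lemma}) gives $\norm{E^{(l)}}_{l^2}\le\tfrac{cN}{N/2-|l|}\norm{\phi}_{l^2}$. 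Since $\tfrac{N}{N/2-|l|}\le 4$ for $|l|\le N/4$, summing against $\hat f_l^{\,g}$ (and absorbing the top modes as above) yields $\norm{E_i}_{l^2}\le c\norm{\phi}_{l^2}$, i.e. $E_i=A_0(\phi_i)$; rearranging is exactly (\ref{num_approx_Int_prod_f_S_h_phi_1}).

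Next I would deduce (\ref{num_approx_Int_prod_phi_S_h_f_A}) from (\ref{num_approx_Int_prod_f_S_h_phi_1}). Let $L\phi:=S_h^{-1}(f(\alpha_i)\phi_i)$; it suffices to check $\norm{D_h^k(L\phi)}_{l^2}\le c\norm{\phi}_{l^2}$ and $\norm{L(S_h^k\phi)}_{l^2}\le c\norm{\phi}_{l^2}$ for $k=0,1$. The symbol $1/(ik)$ of $S_h^{-1}$ has modulus $\le1$ and the symbol $ik\rho(kh)$ of $D_h$ has modulus $\le|k|$ with $\rho\le1$, so $\norm{L\phi}_{l^2}\le\norm{f(\alpha_i)\phi_i}_{l^2}\le\norm{f}_\infty\norm{\phi}_{l^2}$ and $\norm{D_hL\phi}_{l^2}^2=2\pi\sum_k\rho(kh)^2|\widehat{(f\phi)}_k|^2\le\norm{f(\alpha_i)\phi_i}_{l^2}^2$. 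For the remaining inequality, $L(S_h\phi)=S_h^{-1}(f(\alpha_i)S_h\phi_i)$, which by (\ref{num_approx_Int_prod_f_S_h_phi_1}) equals $f(\alpha_i)\phi_i-S_h^{-1}(S_hf(\alpha_i)\phi_i)+A_0(\phi_i)$; using $\norm{S_hf}_\infty\le c$ (Lemma \ref{S_h_estimate}) and the $l^2$-boundedness of $S_h^{-1}$, each term is $\le c\norm{\phi}_{l^2}$. These are precisely the defining inequalities of $A_{-1}(\phi_i)$, so $S_h^{-1}(f(\alpha_i)\phi_i)=A_{-1}(\phi_i)$.

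The step I expect to be the main obstacle is the bookkeeping in the second paragraph: one must verify carefully that every contribution to $\widehat{E^{(l)}}_k$ not annihilated by the identity $m+l-k=0$ either sits at a mode $k$ with $|k|\ge N/2-|l|$, where the division by $k$ in $S_h^{-1}$ absorbs the accompanying factor of $N$, or carries a Fourier coefficient $\hat f_l^{\,g}$ with $|l|$ large enough to provide the needed decay. This is the same aliasing-at-the-top-mode phenomenon behind the filtering requirement illustrated in the example following Lemma \ref{R_h_lemma}; it is precisely because the dangerous $\mbox{O}(N)$ factors are confined to $k=\pm N/2$ that no filter on $\phi$ is needed here, in contrast to Lemma \ref{R_h_lemma}.
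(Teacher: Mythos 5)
Your proposal is correct and follows essentially the same route as the paper's proof: both derive, in discrete Fourier space, the integration-by-parts identity up to aliasing correction terms carrying a factor of $N/k$, and both control those corrections via the same dichotomy (either $|k|\gtrsim N$ so $N/|k|$ is bounded, or the relevant Fourier coefficient of $f$ is forced to be $\mbox{O}(N^{-s})$ by smoothness), with the second part then deduced exactly as in the paper. Your mode-by-mode decomposition of $f$ is just a repackaging of the paper's convolution-set bookkeeping ($I_{n,k}$, $J_{n,k}$, $K_{n,k}$) — the split at $|l|=N/4$ is equivalent to the paper's split of $k$ into $\varkappa_1$ and $\varkappa_2$, since aliasing at output mode $k$ with $|k|<N/4$ forces $|l|>N/4$.
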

\noindent
The proof of Lemma \ref{Int_product_rule_f_S_h_g_lemma} is technical and is relegated to the appendix.

The above lemma can be used to prove the following estimate on $\tau_i$:
\begin{lemma}\label{zeta_dot_i_lemma}
Let $\tau_i$ represent the discrete interface. Then
\begin{align}\label{zeta_dot_i_err_est_main_in_lemma}
\dot{\tau}_i&=i S_h^{-1}\bigpa{\sigma e^{i\theta(\alpha_i)} \dot{\theta}_i}
+A_{-3}(\dot{\theta}_i)+A_{-s}(\dot{\sigma}_i)
+\dot{\tau_c} \\
&= A_{-1}(\dot{\theta}_i)+A_{-s}(\dot{\sigma}_i)
+\dot{\tau_c}. 
\end{align}
\end{lemma}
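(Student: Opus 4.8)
The plan is to start from the discrete reconstruction (\ref{Z_Int_disc}), $\tau_i=\tau_c+S_h^{-1}\bigpa{\sigma e^{i\theta}-\ip{\sigma e^{i\theta}}_h}_i$, and subtract the corresponding quantity $\tau_h(\alpha_i)$ obtained by replacing the discrete $\sigma,\theta,\tau_c$ with the exact $s_{\alpha},\theta(\alpha_i)$ and the exactly-evolved constant mode; by (\ref{tau_h_alpha_i_formula_1}) this $\tau_h(\alpha_i)$ agrees with $\tau(\alpha_i)$ up to $\mbox{O}(h^{m+\frac{1}{2}})$. Because the Fourier definition (\ref{Int_h_defn}) of $S_h^{-1}$ annihilates the $k=0$ mode, the two mean-subtraction terms drop out of the difference, and linearity of $S_h^{-1}$ leaves
\[
\dot{\tau}_i=\dot{\tau_c}+S_h^{-1}\bigpa{\bigpa{\sigma e^{i\theta}}_i^{\cdot}}+\mbox{O}(h^{m+\frac{1}{2}}),
\]
where $\bigpa{\sigma e^{i\theta}}_i^{\cdot}=\sigma e^{i\theta_i}-s_{\alpha}e^{i\theta(\alpha_i)}$; the $\mbox{O}(h^{m+\frac{1}{2}})$ consistency error is higher order than anything retained below and gets absorbed.

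Next I would substitute the expansion of $\bigpa{\sigma e^{i\theta}}_i^{\cdot}$ from Lemma \ref{tan_vector_est_lemma} — legitimate since $\norm{\dot{\theta}}_{\infty},\norm{\dot{\sigma}}_{\infty}\le h^3$ for $t\le T^{\ast}$ by (\ref{dot_theta_sigma_alpha_0_assumptions_135}) — namely $\bigpa{\sigma e^{i\theta}}_i^{\cdot}=i s_{\alpha}e^{i\theta(\alpha_i)}\dot{\theta}_i+e^{i\theta(\alpha_i)}\dot{\sigma}_i+A_{-3}(\dot{\theta}_i)$, and then apply $S_h^{-1}$ term by term. The term $S_h^{-1}\bigpa{e^{i\theta(\alpha_i)}\dot{\sigma}_i}=\dot{\sigma}\,S_h^{-1}\bigpa{e^{i\theta(\alpha_i)}}$ is $A_{-s}(\dot{\sigma}_i)$ by Lemma \ref{s_alpha_err_int_prod_lemma} (spatial constancy of $\dot{\sigma}$, smoothness of $e^{i\theta(\cdot)}$); $S_h^{-1}$ applied to an $A_{-3}(\dot{\theta}_i)$ term stays $A_{-3}(\dot{\theta}_i)$ since $S_h^{-1}$ is a bounded order-one smoothing operator in $l^2$; and rewriting $s_{\alpha}$ as $\sigma-\dot{\sigma}$ in the leading term produces an extra $i\dot{\sigma}\,S_h^{-1}\bigpa{e^{i\theta(\alpha_i)}\dot{\theta}_i}$ which, thanks to the prefactor $|\dot{\sigma}|\le h^3$ (enough to absorb up to three discrete derivatives via the inverse inequality), is also $A_{-3}(\dot{\theta}_i)$. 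Collecting the pieces yields the first displayed identity of the lemma, with $\dot{\tau_c}$ left intact.

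For the second equality I would invoke Lemma \ref{Int_product_rule_f_S_h_g_lemma}, equation (\ref{num_approx_Int_prod_phi_S_h_f_A}), with $f(\alpha)=i\sigma e^{i\theta(\alpha)}$ and $\phi_i=\dot{\theta}_i$: here $f$ is a smooth ($C^3$, in fact better) function of $\alpha$ because $\sigma$ is spatially constant and, by the standing hypothesis $\min_{0\le t\le T}s_{\alpha}(t)>c$ of Theorem \ref{MainThm}, bounded above and below, while $\theta(\cdot,t)\in C^{m+1}$. This gives $i S_h^{-1}\bigpa{\sigma e^{i\theta(\alpha_i)}\dot{\theta}_i}=A_{-1}(\dot{\theta}_i)$, and absorbing the leftover $A_{-3}(\dot{\theta}_i)$ into $A_{-1}(\dot{\theta}_i)$ completes the proof.

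There is no genuinely new estimate to be made; the lemma is essentially a bookkeeping corollary of Lemmas \ref{tan_vector_est_lemma}, \ref{s_alpha_err_int_prod_lemma} and \ref{Int_product_rule_f_S_h_g_lemma}. The two points I would be most careful about — and the only places any work hides — are (i) confirming via (\ref{Int_h_defn}) that $S_h^{-1}$ truly erases the difference of discrete means, so that no $\dot{\theta}$- or $\dot{\sigma}$-dependent constant leaks out of the smoothing operators, and (ii) checking that $\sigma e^{i\theta(\cdot)}$ qualifies as the ``smooth function'' required by Lemmas \ref{s_alpha_err_int_prod_lemma} and \ref{Int_product_rule_f_S_h_g_lemma} uniformly on $0\le t\le T^{\ast}$, which is exactly what the lower bound on $s_{\alpha}$ supplies. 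The one substantive ingredient, Lemma \ref{Int_product_rule_f_S_h_g_lemma} itself, is established separately in the appendix.
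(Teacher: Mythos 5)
Your proposal is correct and follows essentially the same route as the paper: take the variation of (\ref{Z_Int_disc}), insert the first equality of Lemma \ref{tan_vector_est_lemma}, dispose of the $\dot{\sigma}$ term by its spatial constancy, and apply Lemma \ref{Int_product_rule_f_S_h_g_lemma} to convert the leading term into $A_{-1}(\dot{\theta}_i)$. Your extra care with the discrete means (annihilated by $S_h^{-1}$) and with replacing $s_{\alpha}$ by $\sigma$ in the leading term are harmless refinements of the same argument.
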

\begin{proof}
Recall that
\begin{align}
\tau_i=S_h^{-1} \bigpa{
\sigma e^{i\theta_i}-\ip{\sigma e^{i\theta}}_h
}+\tau_c\mbox{,}
\end{align}
where $\tau_c=\hat{\tau}_0$ is the $k=0$ Fourier mode of $\tau_i$.
Taking the variation, we have
\begin{align}\label{zeta_dot_i}
\dot{\tau}_i=S_h^{-1} \bigpa{
(\sigma e^{i\theta})_i^{\cdot}-\ip{(\sigma e^{i\theta})^{\cdot}}_h
}+\dot{\tau_c}\mbox{.}
\end{align}
We now substitute the first equality in (\ref{s_alpha_e_i_theta_err_eqn_simp}) and use Lemma \ref{product_rule_f_S_h_g_lemma} to find that
\begin{align}
\dot{\tau}_i&=i S_h^{-1} \bigpa{
\sigma e^{i\theta(\alpha_i)} \dot{\theta}_i
}
+S_h^{-1} \bigpa{
e^{i\theta(\alpha_i)} \dot{\sigma}
}\nonumber\\
&+\dot{\tau_c}
+A_{-3}(\dot{\theta_i})+A_{-s}(\dot{\sigma_i})\mbox{.}
\end{align}
Finally, from Lemma \ref{num_approx_S_h_f_dot_sigma_decomp_lemma} the second term on the right hand side of the above relation is $A_{-s}(\dot{\sigma_i})$, 
and by Lemma \ref{Int_product_rule_f_S_h_g_lemma} the first term is $A_{-1}(\dot{\theta})$, which gives the result.
\end{proof}
\begin{remark}\label{S_h_dot_zeta_i_remark}
Evidently, we also have from (\ref{zeta_dot_i})
\begin{align}
S_h \dot{\tau}_i=\bigpa{
(\sigma e^{i\theta})_i^{\cdot}-\ip{(\sigma e^{i\theta})_i^{\cdot}}_h
}\mbox{,}
\end{align}
and following the same reasoning as in the proof of Lemma \ref{zeta_dot_i_lemma}
\begin{align}\label{S_h_dot_zeta_i_A_estimate_eqn_174}
S_h \dot{\tau}_i&= i s_{\alpha} e^{i\theta(\alpha_i)} \dot{\theta}_i
+A_{-3}(\dot{\theta}_i)
+A_{-s}(\dot{\sigma}_i)\nonumber\\
&=A_0(\dot{\theta}_i)+A_{-s}(\dot{\sigma}_i)\mbox{.}
\end{align}
Since $\norm{\dot{\theta}}_{l^2}\leq h^{\frac{7}{2}}, \norm{\dot{\sigma}}_{l^2}<h^{\frac{7}{2}}$ and $\norm{\dot{\theta}}_{\infty}\leq h^3, \norm{\dot{\sigma}}_{\infty}<h^3$, it also follows that $\norm{S_h\dot{\tau}}_{l^2}\leq h^{\frac{7}{2}}$ and $\norm{S_h\dot{\tau}}_{\infty}\leq h^3$.
\end{remark}

\section{Estimates for the variation of velocities}
Recall the discrete equation for velocity has been decomposed as $u_i=\MCAL{H}_h \omega_i + (u_R)_i$ (cf. (\ref{vel_decomp})).
We further decompose  $(u_R)_i$ as $(u_R)_i=U_{1,i}+U_{2,i}+U_{3,i}+U_{4,i}$, where
\begin{align}
U_{1,i}&=
-\frac{h}{\pi}
\DPS\sum_{\substack{j=-\frac{N}{2}+1\\(j-i)\mbox{ odd}}}^{\frac{N}{2}}
\omega_j^p \biggpa{
2\mbox{Re}\Bigpa{\frac{S_h \tau_j}{\tau_j-\tau_i}}
+\cot\Bigpa{\frac{\alpha_i-\alpha_j}{2}}
}
\mbox{,}\label{UI12_disc_est} \\
U_{2,i}&=\frac{h}{\pi}
\DPS\sum_{\substack{j=-\frac{N}{2}+1\\(j-i)\mbox{ odd}}}^{\frac{N}{2}}
\frac{\OVL{\omega}_j^p S_h \tau_j}{\OVL{\tau_j}-\OVL{\tau_i}}\mbox{,}
\label{UI3_disc_est} \\
U_{3,i}&=-\frac{h}{\pi}
\DPS\sum_{\substack{j=-\frac{N}{2}+1\\(j-i)\mbox{ odd}}}^{\frac{N}{2}}
\frac{\OVL{\omega}_j^p (\tau_j-\tau_i) \OVL{S_h\tau_j}}{(\OVL{\tau_j}-\OVL{\tau_i})^2}\mbox{,}
\label{UI4_disc_est}\\
U_{4,i}&=(Q+iB)\tau_i-\frac{iG\tau_i}{2}\mbox{,}
\label{UI5_disc_est}
\end{align}
for $i=-\frac{N}{2}+1,\cdots,\frac{N}{2}$.

Variations of the $U_{l,i},l=1,2,\cdots,4$ are calculated using Lemmas \ref{product_rule_for_dots_lemma} and \ref{quotient_rule_1_over_f_lemma}.
We represent these variations as the sum of linear and nonlinear quantities in the variation, so that
\begin{align}\label{dot_U_1i_L}
\dot{U}_{1,i}&=
-\frac{h}{\pi}
\DPS\sum_{\substack{j=-\frac{N}{2}+1\\(j-i)\mbox{ odd}}}^{\frac{N}{2}}
\dot{\omega}_j^p
\biggpa{
2\mbox{Re}\Bigpa{\frac{S_h \tau_h(\alpha_j)}{\tau_h(\alpha_j)-\tau_h(\alpha_i)}}
+\cot\Bigpa{\frac{\alpha_i-\alpha_j}{2}}
}\nonumber \\
&-\frac{h}{\pi}
\DPS\sum_{\substack{j=-\frac{N}{2}+1\\(j-i)\mbox{ odd}}}^{\frac{N}{2}}
\biggpa{\omega_h^p(\alpha_j)
2\mbox{Re}\Bigpa{\frac{S_h \dot{\tau}_j}{\tau_h(\alpha_j)-\tau_h(\alpha_i)}
-\frac{S_h \tau_h(\alpha_j)}{(\tau_h(\alpha_j)-\tau_h(\alpha_i))^2}
(\dot{\tau}_j-\dot{\tau}_i)
}
}\nonumber \\
&+\dot{U}_{1,i}^{NL}\mbox{,}
\end{align}
where $ \dot{U}_{1,i}^{NL}$ represents the nonlinear terms in the variation. Expressions for the nonlinear terms are given in the appendix. 

Continuing,
\begin{align}\label{dot_U_2i_L}
\dot{U}_{2,i}&=\frac{h}{\pi}
\DPS\sum_{\substack{j=-\frac{N}{2}+1\\(j-i)\mbox{ odd}}}^{\frac{N}{2}}
\Biggmpa{
\frac{\OVL{\dot{\omega}_j^p} S_h\tau_h(\alpha_j)+\OVL{\omega_h^p}(\alpha_j)S_h\dot{\tau}_j}{\OVL{\tau_h}(\alpha_j)
-\OVL{\tau_h}(\alpha_i)}
-\bigpa{\OVL{\dot{\tau}_j}-\OVL{\dot{\tau}_i}} \frac{\OVL{\omega^p_h}(\alpha_j) S_h\tau_h(\alpha_j)}{[\OVL{\tau_h}(\alpha_j)-\OVL{\tau_h}(\alpha_i)]^2}
}\nonumber \\
&+\dot{U}_{2,i}^{NL}\mbox{.}
\end{align}
and
\begin{align}
\dot{U}_{3,i}&=-\frac{h}{\pi} \label{U3_dot}
\DPS\sum_{\substack{j=-\frac{N}{2}+1\\(j-i)\mbox{ odd}}}^{\frac{N}{2}}
\Biggmpa{
\Bigpa{
\OVL{\dot{\omega}_j^p}\bigmpa{\tau_h(\alpha_j)-\tau_h(\alpha_i)}\OVL{S_h\tau_h}(\alpha_j)
+\OVL{\omega_h^p}(\alpha_j)\bigmpa{\dot{\tau}_j-\dot{\tau}_i}\OVL{S_h\tau_h}(\alpha_j)\nonumber\\
&+\OVL{\omega_h^p}(\alpha_j)\bigmpa{\tau_h(\alpha_j)-\tau_h(\alpha_i)}\OVL{S_h\dot{\tau}_j}
}\Bigpa{
\frac{1}{\OVL{\tau_h}(\alpha_j)-\OVL{\tau_h}(\alpha_i)}
}^2
\nonumber \\
&-\frac{
2\OVL{\omega_h^p}(\alpha_j)\bigmpa{\tau_h(\alpha_j)-\tau_h(\alpha_i)}\OVL{S_h\tau_h}(\alpha_j)\bigmpa{\OVL{\dot{\tau}_j}-\OVL{\dot{\tau}_i}}
}{\bigmpa{\OVL{\tau_h}(\alpha_j)-\OVL{\tau_h}(\alpha_i)}^3}
}
\nonumber \\
&+\dot{U}_{3,i}^{NL}\mbox{.}
\end{align}
Finally,
\begin{align}\label{UI5_disc}
\dot{U}_{4,i}=(Q+iB)\OVL{\dot{\tau}_i}-\frac{iG\dot{\tau}_i}{2}\mbox{.}
\end{align}

In the next section, we compute the leading order contribution to the variation in the velocity. This computation uses the following estimates.
\begin{lemma}\label{I_h_lemma}
Let $\dot{q}\in l^2$ be a variation of some quantity, and let
\begin{align}
I_h \dot{q}_i=\frac{h}{\pi}
\DPS\sum_{\substack{j=-\frac{N}{2}+1\\(j-i){\rm\ odd}}}^{\frac{N}{2}}
\frac{f(\alpha_j)\dot{q}_j}{\tau(\alpha_j)-\tau(\alpha_i)}\mbox{,}
\end{align}
with $f(\alpha)$ and $\tau(\alpha)$ smooth, and $\tau_{\alpha}(\alpha)\neq 0$.
Define
\begin{align}\label{K_1h_f_zeta_decomp}
K_{1h}[f,\tau](\dot{q}_i)
&=\frac{h}{\pi}\DPS\sum_{\substack{j=-\frac{N}{2}+1\\(j-i){\rm\ odd}}}^{\frac{N}{2}}
\dot{q}_j
\Bigmpa{\frac{f(\alpha_j)}{\tau(\alpha_j)-\tau(\alpha_i)}
-\frac{f(\alpha_i)}{2\tau_{\alpha}(\alpha_i)}\cot\Bigpa{\frac{\alpha_j-\alpha_i}{2}}
}\mbox{.}
\end{align}
Then
\begin{align}\label{I_h_decomp}
I_h \dot{q}_i =
-\frac{f(\alpha_i)}{2\tau_{\alpha}(\alpha_i)}\MCAL{H}_h \dot{q}_i
+K_{1h}[f,\tau](\dot{q}_i)\mbox{,}
\end{align}
and $K_{1h}[f,\tau](\dot{q}_i^p)=A_{-2}(\dot{q}_i)$, when filtering is applied.
\end{lemma}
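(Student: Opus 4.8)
The plan is to establish the two assertions of Lemma~\ref{I_h_lemma} separately. The decomposition (\ref{I_h_decomp}) is a purely algebraic identity, while the smoothing property $K_{1h}[f,\tau](\dot q_i^p)=A_{-2}(\dot q_i)$ will be obtained by recognizing $K_{1h}[f,\tau]$ as an operator of the form $R_h$ in (\ref{RHDEFN}) and quoting Lemma~\ref{R_h_lemma}.

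For the decomposition I would add and subtract the discrete Hilbert kernel inside the sum defining $I_h\dot q_i$: writing
\[
\frac{f(\alpha_j)}{\tau(\alpha_j)-\tau(\alpha_i)}=\Bigmpa{\frac{f(\alpha_j)}{\tau(\alpha_j)-\tau(\alpha_i)}-\frac{f(\alpha_i)}{2\tau_\alpha(\alpha_i)}\cot\Bigpa{\frac{\alpha_j-\alpha_i}{2}}}+\frac{f(\alpha_i)}{2\tau_\alpha(\alpha_i)}\cot\Bigpa{\frac{\alpha_j-\alpha_i}{2}},
\]
the contribution of the bracketed term, after multiplication by $\frac{h}{\pi}\dot q_j$ and summation over odd $j-i$, is exactly $K_{1h}[f,\tau](\dot q_i)$ by (\ref{K_1h_f_zeta_decomp}). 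In the remaining term the factor $\frac{f(\alpha_i)}{2\tau_\alpha(\alpha_i)}$ does not depend on the summation index and can be pulled out; using the oddness $\cot\bigpa{\tfrac{\alpha_j-\alpha_i}{2}}=-\cot\bigpa{\tfrac{\alpha_i-\alpha_j}{2}}$ and the definition (\ref{discrete_Hilbert}) of $\MCAL{H}_h$, that term equals $-\frac{f(\alpha_i)}{2\tau_\alpha(\alpha_i)}\MCAL{H}_h\dot q_i$, which gives (\ref{I_h_decomp}).

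For the second assertion, observe that $K_{1h}[f,\tau](\dot q_i^p)=R_h(\dot q_i^p)$ with kernel
\[
f_K(\alpha,\alpha')=\frac{1}{2\pi}\Bigmpa{\frac{f(\alpha')}{\tau(\alpha')-\tau(\alpha)}-\frac{f(\alpha)}{2\tau_\alpha(\alpha)}\cot\Bigpa{\frac{\alpha'-\alpha}{2}}}.
\]
By Lemma~\ref{R_h_lemma} it then suffices to show that $f_K$ is a $C^r$ periodic function of $(\alpha,\alpha')$ for some $r>3$, and since $f$ and $\tau$ may be taken as smooth as needed (the regularity parameter $m$ being large), it is enough that $f_K$ inherit that smoothness. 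Off the diagonal $\alpha=\alpha'$ this is immediate, because $\tau$ parametrizes a non-self-intersecting interface, so $\tau(\alpha')\neq\tau(\alpha)$ there, and $\cot\bigpa{\tfrac{\alpha'-\alpha}{2}}$ is smooth away from the diagonal. Across the diagonal I would write $\tau(\alpha')-\tau(\alpha)=(\alpha'-\alpha)\,m(\alpha,\alpha')$ with $m(\alpha,\alpha')=\int_0^1\tau_\alpha\bigpa{\alpha+s(\alpha'-\alpha)}\,ds$ smooth and, by $\tau_\alpha\neq 0$ together with injectivity of $\tau$, bounded away from zero; using $\tfrac12\cot\bigpa{\tfrac{\alpha'-\alpha}{2}}=\tfrac{1}{\alpha'-\alpha}+(\text{smooth})$ near the diagonal, the two $(\alpha'-\alpha)^{-1}$ singularities in $2\pi f_K$ cancel because $\frac{f(\alpha')}{m(\alpha,\alpha')}\to\frac{f(\alpha)}{\tau_\alpha(\alpha)}$ on the diagonal, leaving a difference quotient $\frac{F(\alpha,\alpha')-F(\alpha,\alpha)}{\alpha'-\alpha}=\int_0^1(\partial_{\alpha'}F)\bigpa{\alpha,\alpha+s(\alpha'-\alpha)}\,ds$ with $F(\alpha,\alpha')=f(\alpha')/m(\alpha,\alpha')$, which is jointly $C^r$ when $f,\tau\in C^{r+1}$. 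This is the same subtract-the-singularity mechanism already used in the proof of Lemma~\ref{Commutator_thm_H_h_g}.

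The main obstacle is precisely this last smoothness verification: one must check that the cancellation of the leading $O\bigpa{(\alpha'-\alpha)^{-1}}$ singularities persists through all derivatives up to order $r$ and that the resulting kernel is uniformly $C^r$ on $[-\pi,\pi]^2$, consistent with its periodic and diagonal structure. The bookkeeping for the higher-order derivatives of $f_K$ near the diagonal is routine but tedious, and I would either carry it out in the appendix or defer to the analogous computation in \cite{BHL:1996}. Everything else — the algebraic identity (\ref{I_h_decomp}) and the direct appeal to Lemma~\ref{R_h_lemma} (and, for the unfiltered case, Remark~\ref{R_h_A_0_no_filt_equiv_remark}, which yields only $A_0$) — is immediate.
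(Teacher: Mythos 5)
Your proposal is correct and follows essentially the same route as the paper's proof: add and subtract the cotangent kernel to extract $-\frac{f(\alpha_i)}{2\tau_\alpha(\alpha_i)}\MCAL{H}_h\dot{q}_i$, then observe that the bracketed kernel in $K_{1h}$ is a smooth periodic function of both variables and invoke Lemma~\ref{R_h_lemma}. The only difference is that the paper states the smoothness of the kernel without proof, whereas you supply the diagonal cancellation argument via $\tau(\alpha')-\tau(\alpha)=(\alpha'-\alpha)m(\alpha,\alpha')$ and $\tfrac12\cot(z/2)=z^{-1}+(\text{smooth})$, which is a correct and useful elaboration.
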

\begin{proof}
The decomposition in (\ref{I_h_decomp}) follows from adding and substracting $-\frac{f(\alpha_i)}{2\tau_{\alpha}(\alpha_i)}\MCAL{H}_h q_i$.
The fact that $K_{1h}$ is a smoothing operator on $\dot{q}^p$ then follows by noting that the kernel within brackets in $K_{1h}$ is smooth, and applying Lemma \ref{R_h_lemma}.
\end{proof}
\begin{lemma}\label{J_h_lemma}
Under the same assumptions as in Lemma \ref{I_h_lemma}, let
\begin{align}
J_h \dot{\tau}_i
&=\frac{h}{\pi}\DPS\sum_{\substack{j=-\frac{N}{2}+1\\(j-i){\rm\ odd}}}^{\frac{N}{2}}
\frac{f(\alpha_j)\bigpa{\dot{\tau}_j-\dot{\tau}_i}}{\bigmpa{\tau(\alpha_j)-\tau(\alpha_i)}^2}\mbox{,}
\end{align}
and define
\begin{align}\label{K_2h_f_zeta_defn}
&K_{2h}[f,\tau](\dot{\tau}_i)
=\frac{h}{\pi}\DPS\sum_{\substack{j=-\frac{N}{2}+1\\(j-i){\rm\ odd}}}^{\frac{N}{2}}
\bigpa{\dot{\tau}_j-\dot{\tau}_i}\Biggmpa{
\frac{f(\alpha_j)}{\bigmpa{\tau(\alpha_j)-\tau(\alpha_i)}^2}
-\frac{f(\alpha_i)}{4\tau_{\alpha}^2(\alpha_i)\sin^2\bigpa{\frac{\alpha_j-\alpha_i}{2}}}\nonumber \\
&-\frac{f_\alpha(\alpha_i) \tau_\alpha(\alpha_i) - f(\alpha_i) \tau_{\alpha \alpha}(\alpha_i)}{2\tau_{\alpha}^3(\alpha_i)}
\cot\Bigpa{\frac{\alpha_j-\alpha_i}{2}}
}\mbox{.}
\end{align}
Then
\begin{align}\label{J_h_zeta_err}
J_h \dot{\tau}_i
&=-\frac{f(\alpha_i)}{2\tau_{\alpha}^2(\alpha_i)}\MCAL{H}_h(S_h \dot{\tau}_i)
-\frac{f_\alpha(\alpha_i) \tau_\alpha(\alpha_i) - f(\alpha_i) \tau_{\alpha \alpha}(\alpha_i)}{2\tau_{\alpha}^3(\alpha_i)}
\MCAL{H}_h \dot{\tau}_i
\nonumber \\
&+K_{2h}[f,\tau](\dot{\tau}_i)\mbox{,}
\end{align}
and
\begin{align}\label{K_2h_commut_zeta_filter_err}
K_{2h}[f,\tau](\dot{\tau}_i^p)
=A_{-2}(\dot{\tau}_i)\mbox{.}
\end{align}
\end{lemma}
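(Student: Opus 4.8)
The plan is to follow the template of Lemma \ref{I_h_lemma}, the only new feature being that the kernel of $J_h$ has a \emph{double} pole on the diagonal $\alpha_j=\alpha_i$, so that \emph{two} singular pieces — a $\sin^{-2}$ term and a $\cot$ term — must be split off rather than one. First I would simply rewrite the kernel as
\[
\frac{f(\alpha_j)}{[\tau(\alpha_j)-\tau(\alpha_i)]^2}
=\frac{f(\alpha_i)}{4\tau_{\alpha}^2(\alpha_i)\sin^2\!\bigpa{\frac{\alpha_j-\alpha_i}{2}}}
+\frac{f_\alpha(\alpha_i)\tau_\alpha(\alpha_i)-f(\alpha_i)\tau_{\alpha\alpha}(\alpha_i)}{2\tau_{\alpha}^3(\alpha_i)}\cot\!\bigpa{\tfrac{\alpha_j-\alpha_i}{2}}
+m(\alpha_i,\alpha_j),
\]
which merely \emph{defines} $m$ to be the bracketed quantity in \eqref{K_2h_f_zeta_defn}; the content of \eqref{J_h_zeta_err} is then the evaluation of the alternate-point sum of $(\dot\tau_j-\dot\tau_i)$ against the first two kernels. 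For the $\sin^{-2}$ kernel I would invoke Lemma \ref{Cauchy_Transform_Lemma}: by \eqref{H_h_disc_real_line_prop_3}, $\MCAL{H}_h(S_h\dot\tau_i)=\frac1\pi\sum_{(j-i)\,\mathrm{odd}}\frac{\dot\tau_i-\dot\tau_j}{(\alpha_i-\alpha_j)^2}(2h)$ over the infinite lattice of \eqref{H_h_disc_lim}, and the periodic summation identity $\tfrac1{4\sin^2(x/2)}=\sum_{k\in\mathbb Z}(x-2k\pi)^{-2}$ (the derivative analogue of the cotangent identity used in the proof of Lemma \ref{Cauchy_Transform_Lemma}) converts the first subtracted sum into exactly $-\frac{f(\alpha_i)}{2\tau_\alpha^2(\alpha_i)}\MCAL{H}_h(S_h\dot\tau_i)$. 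For the $\cot$ kernel I would use $\frac h\pi\sum_{(j-i)\,\mathrm{odd}}(\dot\tau_j-\dot\tau_i)\cot(\tfrac{\alpha_j-\alpha_i}{2})=-\MCAL{H}_h\dot\tau_i$, which follows from \eqref{discrete_Hilbert} together with the antisymmetry of the cotangent (equivalently $\MCAL{H}_h(\mathbf 1)=0$, cf.\ \eqref{H_h_disc_real_line_prop_1}). This produces precisely the two Hilbert-transform terms on the right of \eqref{J_h_zeta_err}, leaving $K_{2h}[f,\tau](\dot\tau_i)=\frac h\pi\sum_{(j-i)\,\mathrm{odd}}(\dot\tau_j-\dot\tau_i)\,m(\alpha_i,\alpha_j)$.

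The heart of the proof is to show that $m(\alpha,\alpha')$ is a $C^r$ periodic function of both variables with $r>3$. Off the diagonal this is immediate from the smoothness of $f$ and $\tau$, the fact that $\tau$ parametrizes a simple closed curve (so $\tau(\alpha')\ne\tau(\alpha)$ for $\alpha'\not\equiv\alpha$) and $\tau_\alpha\ne0$. Near the diagonal I would Taylor-expand in $x=\alpha'-\alpha$: from $\tau(\alpha')-\tau(\alpha)=\tau_\alpha(\alpha)x+\tfrac12\tau_{\alpha\alpha}(\alpha)x^2+\mathrm O(x^3)$ one obtains
\[
\frac{f(\alpha')}{[\tau(\alpha')-\tau(\alpha)]^2}
=\frac{f(\alpha)}{\tau_\alpha^2(\alpha)}\,x^{-2}
+\frac{f_\alpha(\alpha)\tau_\alpha(\alpha)-f(\alpha)\tau_{\alpha\alpha}(\alpha)}{\tau_\alpha^3(\alpha)}\,x^{-1}+\mathrm O(1),
\]
and since $[4\sin^2(x/2)]^{-1}=x^{-2}+\mathrm O(1)$ and $\cot(x/2)=2x^{-1}+\mathrm O(x)$, the two subtracted terms cancel exactly the $x^{-2}$ and $x^{-1}$ singularities — this is what forces the particular coefficient $\tfrac{f_\alpha\tau_\alpha-f\tau_{\alpha\alpha}}{2\tau_\alpha^3}$ appearing in \eqref{K_2h_f_zeta_defn}, paired with the factor $2$ coming from $\cot$. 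Carrying the expansion to higher order, which consumes a fixed number of derivatives of $f$ and $\tau$ and uses $\inf|\tau_\alpha|>0$, shows that $m$ extends across the diagonal to a $C^r$ function with $r>3$ under the standing smoothness hypotheses. I expect this diagonal expansion, together with bookkeeping of how much regularity it costs, to be the only genuinely delicate part of the argument.

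Finally, with $m\in C^r$, $r>3$, in hand I would split $K_{2h}[f,\tau](\dot\tau_i)=\frac h\pi\sum_{(j-i)\,\mathrm{odd}}\dot\tau_j\,m(\alpha_i,\alpha_j)-\dot\tau_i\,\frac h\pi\sum_{(j-i)\,\mathrm{odd}}m(\alpha_i,\alpha_j)$. The first sum is exactly of the form \eqref{RHDEFN} of the operator $R_h$ with smooth kernel $m/(2\pi)$, so by Lemma \ref{R_h_lemma} its value on the filtered argument $\dot\tau_i^p$ is $A_{-2}(\dot\tau_i)$; the second sum is $\dot\tau_i$ times the alternate-point quadrature of the smooth periodic function $m(\alpha_i,\cdot)$, i.e.\ $\dot\tau_i$ times a bounded (smooth, up to spectrally small error by Lemma \ref{trapezoidal_rule_estimate}) discrete multiplier, a lower-order contribution handled directly. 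This yields $K_{2h}[f,\tau](\dot\tau_i^p)=A_{-2}(\dot\tau_i)$, which is \eqref{K_2h_commut_zeta_filter_err}, and completes the proof. (As in Lemma \ref{I_h_lemma}, the filter is essential: without it the $R_h$ part of $K_{2h}$ would only be $A_0$ because of aliasing, and the whole decomposition \eqref{J_h_zeta_err} would lose the gain of two derivatives that drives the energy estimates.)
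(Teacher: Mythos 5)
Your proposal is correct and follows essentially the same route as the paper's proof: add and subtract the $\sin^{-2}$ and cotangent kernels, convert the $\sin^{-2}$ sum to the infinite lattice via the identity $\sin^{-2}(z/2)=4\sum_n(z-2n\pi)^{-2}$ so as to identify $\MCAL{H}_h(S_h\dot{\tau}_i)$ through (\ref{H_h_disc_real_line_prop_3}), read off $\MCAL{H}_h\dot{\tau}_i$ from the cotangent sum, and obtain (\ref{K_2h_commut_zeta_filter_err}) from the smoothness of the residual kernel together with Lemma \ref{R_h_lemma}. The only difference is that you spell out the diagonal Taylor expansion justifying that smoothness (and the split of $\dot{\tau}_j-\dot{\tau}_i$ into the $R_h$ part plus a multiplication operator), details the paper states without proof.
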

\begin{proof}
Write
\begin{align}\label{UI1_third_term_est}
\frac{h}{\pi}&\DPS\sum_{\substack{j=-\frac{N}{2}+1\\(j-i){\rm\ odd}}}^{\frac{N}{2}}
\frac{f(\alpha_j)\bigpa{\dot{\tau}_j-\dot{\tau}_i}}{\bigmpa{\tau(\alpha_j)-\tau(\alpha_i)}^2}
=\frac{h}{\pi}\Biggmpa{
\frac{f(\alpha_i)}{4\tau_{\alpha}^2(\alpha_i)}
\DPS\sum_{\substack{j=-\frac{N}{2}+1\\(j-i){\rm\ odd}}}^{\frac{N}{2}}
\frac{\dot{\tau}_j-\dot{\tau}_i}{\sin^2\bigpa{\frac{\alpha_j-\alpha_i}{2}}}
\nonumber\\
&+\frac{f_\alpha(\alpha_i) \tau_\alpha(\alpha_i) - f(\alpha_i) \tau_{\alpha \alpha}(\alpha_i)}{2\tau_{\alpha}^3(\alpha_i)}
\DPS\sum_{\substack{j=-\frac{N}{2}+1\\(j-i){\rm\ odd}}}^{\frac{N}{2}}
\bigpa{\dot{\tau}_j-\dot{\tau}_i}
\cot\Bigpa{\frac{\alpha_j-\alpha_i}{2}}
}
\nonumber\\
&+K_2[f,\tau](\dot{\tau}^p)
\end{align}
by adding and subtracting $\frac{h}{\pi}$ times the quantity in brackets.
To make use of the formula (\ref{H_h_disc_real_line_prop_3}) for the derivative of the Hilbert transform,  apply the identity \cite{CKP:2005}
\begin{align}
\frac{1}{\sin^2 \frac{z}{2}}
=4\DPS\sum_{n=-\infty}^{\infty}
\frac{1}{(z-2n\pi)^2}
\end{align}
to transform the representation of the first sum within brackets in (\ref{UI1_third_term_est}) from a periodic to an infinite domain, 
\begin{align}\label{zeta_over_sine_square_complex_expansion}
\DPS\sum_{\substack{j=-\frac{N}{2}+1\\(j-i){\rm\ odd}}}^{\frac{N}{2}}
\frac{\dot{\tau}_j-\dot{\tau}_i}{\sin^2\bigpa{\frac{\alpha_j-\alpha_i}{2}}}
=4 \DPS\sum_{(j-i){\rm\ odd}}
\frac{\dot{\tau}_j-\dot{\tau}_i}{\bigpa{\alpha_j-\alpha_i}^2}\mbox{,}
\end{align}
where we have used the periodicity of $\dot{\tau}_j$.
Identify
\begin{align}
\MCAL{H}_h(S_h \dot{\tau}_i)
&=\frac{2h}{\pi}\DPS\sum_{(j-i){\rm\ odd}}
\frac{\dot{\tau}_i-\dot{\tau}_j}{(\alpha_i-\alpha_j)^2}\mbox{,}
\end{align}
and
\begin{align}\label{Disc_Hilbert_Transform_zeta_dot_eqn_208}
\MCAL{H}_h \dot{\tau}_i
=-\frac{h}{\pi}
\DPS\sum_{\substack{j=-\frac{N}{2}+1\\(j-i){\rm\ odd}}}^{\frac{N}{2}}
\bigpa{\dot{\tau}_j-\dot{\tau}_i}
\cot\Bigpa{\frac{\alpha_j-\alpha_i}{2}}\mbox{}
\end{align}
to obtain (\ref{J_h_zeta_err}) (also using that the discrete Hilbert transform of a constant is zero).
Finally, (\ref{K_2h_commut_zeta_filter_err}) follows from the observation that the quantity within brackets in the definition of $K_{2h}$, 
namely, 
\begin{align}
h(\alpha,\alpha')&=\frac{f(\alpha')}{[\tau(\alpha')-\tau(\alpha)]^2}
-\frac{f(\alpha)}{4\tau_{\alpha}^2(\alpha)\sin^2\bigpa{\frac{\alpha'-\alpha}{2}}}
\nonumber\\
&-\frac{f_\alpha(\alpha_i) \tau_\alpha(\alpha_i) - f(\alpha_i) \tau_{\alpha \alpha}(\alpha_i)}{2\tau_{\alpha}^3(\alpha_i)} \cot\Bigpa{\frac{\alpha'-\alpha}{2}}
\end{align}
is a smooth function of $\alpha$ and $\alpha'$.
\end{proof}

The following lemmas are derived similarly and are presented without proof.
\begin{lemma}\label{L_h_lemma}
Under the same assumptions as Lemma \ref{I_h_lemma}, let
\begin{align}
L_h \dot{q}_i
=\frac{h}{\pi}
\DPS\sum_{\substack{j=-\frac{N}{2}+1\\(j-i){\rm\ odd}}}^{\frac{N}{2}}
\frac{f(\alpha_j)\bigpa{\tau(\alpha_j)-\tau(\alpha_i)}\dot{q}_j}{
\bigmpa{\OVL{\tau(\alpha_j)}-\OVL{\tau(\alpha_i)}}^2
}\mbox{,}
\end{align}
and define
\begin{align}
&K_{3h}[f,\tau](\dot{q}_i)
=\frac{h}{\pi}\DPS\sum_{\substack{j=-\frac{N}{2}+1\\(j-i){\rm\ odd}}}^{\frac{N}{2}}
\dot{q}_j\Biggmpa{
\frac{\bigpa{\tau(\alpha_j)-\tau(\alpha_i)}f(\alpha_j)}{
\bigmpa{\OVL{\tau(\alpha_j)}-\OVL{\tau(\alpha_i)}}^2
}
+\frac{f(\alpha_i)\tau_{\alpha}(\alpha_i)}{2\OVL{\tau_\alpha^2(\alpha_i)}}
\cot\Bigpa{\frac{\alpha_i-\alpha_j}{2}}
}\mbox{.}
\end{align}
Then
\begin{align}
L_h \dot{q}_i
=-\frac{f(\alpha_i)\tau_{\alpha}(\alpha_i)}{2\OVL{\tau_\alpha^2(\alpha_i)}}
\MCAL{H}_h\dot{q}_i+K_{3h}[f,\tau](\dot{q}_i)\mbox{,}
\end{align}
and $K_{3h}[f,\tau](\dot{q}_i^p)=A_{-2}(\dot{q}_i)$.
\end{lemma}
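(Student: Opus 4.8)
The plan is to follow the proof of Lemma~\ref{I_h_lemma} almost verbatim: isolate the simple pole in the kernel of $L_h$ by subtracting a cotangent term, recognize what is left over as a constant multiple of the discrete Hilbert transform $\MCAL{H}_h$, and then invoke Lemma~\ref{R_h_lemma} to conclude that the regularized remainder $K_{3h}[f,\tau]$ is a second-order smoothing operator when applied to a filtered function.

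\emph{Step 1 (pole structure).} The continuous kernel of $L_h$ is $\frac{1}{\pi}\frac{f(\alpha')\bigpa{\tau(\alpha')-\tau(\alpha)}}{\bigmpa{\OVL{\tau(\alpha')}-\OVL{\tau(\alpha)}}^2}$. Since the numerator vanishes to first order and the denominator to second order as $\alpha'\RA\alpha$, this is a simple pole; a Taylor expansion about $\alpha'=\alpha$ shows the leading behaviour is $\frac{1}{\pi}\frac{f(\alpha)\tau_\alpha(\alpha)}{\OVL{\tau_\alpha^2(\alpha)}}\frac{1}{\alpha'-\alpha}$. Because $\cot\bigpa{\frac{\alpha-\alpha'}{2}}=-\frac{2}{\alpha'-\alpha}+O(\alpha'-\alpha)$ near the diagonal, the same singular part is carried by $-\frac{1}{\pi}\frac{f(\alpha)\tau_\alpha(\alpha)}{2\OVL{\tau_\alpha^2(\alpha)}}\cot\bigpa{\frac{\alpha-\alpha'}{2}}$, which is precisely the term that appears inside the sum defining $K_{3h}[f,\tau]$. \emph{Step 2 (decomposition).} Add and subtract $\frac{h}{\pi}\frac{f(\alpha_i)\tau_\alpha(\alpha_i)}{2\OVL{\tau_\alpha^2(\alpha_i)}}\cot\bigpa{\frac{\alpha_i-\alpha_j}{2}}\dot{q}_j$ inside the sum for $L_h\dot{q}_i$. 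The part kept together with the original kernel is exactly $K_{3h}[f,\tau](\dot{q}_i)$, while the subtracted piece equals $-\frac{f(\alpha_i)\tau_\alpha(\alpha_i)}{2\OVL{\tau_\alpha^2(\alpha_i)}}$ times $\frac{h}{\pi}\DPS\sum_{(j-i){\rm\ odd}}\dot{q}_j\cot\bigpa{\frac{\alpha_i-\alpha_j}{2}}$, which is $\MCAL{H}_h\dot{q}_i$ by the definition (\ref{discrete_Hilbert}). This gives the stated identity $L_h\dot{q}_i=-\frac{f(\alpha_i)\tau_\alpha(\alpha_i)}{2\OVL{\tau_\alpha^2(\alpha_i)}}\MCAL{H}_h\dot{q}_i+K_{3h}[f,\tau](\dot{q}_i)$.

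\emph{Step 3 (smoothing of the remainder).} Write $K_{3h}[f,\tau]$ in the form (\ref{RHDEFN}), $K_{3h}[f,\tau](\dot{q}_i)=\DPS\sum_{(j-i){\rm\ odd}}g(\alpha_i,\alpha_j)\dot{q}_j(2h)$ with $g(\alpha,\alpha')=\frac{1}{2\pi}\Bigmpa{\frac{f(\alpha')\bigpa{\tau(\alpha')-\tau(\alpha)}}{\bigmpa{\OVL{\tau(\alpha')}-\OVL{\tau(\alpha)}}^2}+\frac{f(\alpha)\tau_\alpha(\alpha)}{2\OVL{\tau_\alpha^2(\alpha)}}\cot\bigpa{\frac{\alpha-\alpha'}{2}}}$. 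By Step~1 the two $\frac{1}{\alpha'-\alpha}$ singularities along $\alpha=\alpha'$ cancel, and carrying the Taylor expansion one order further exhibits a finite diagonal limit of $g$, so the apparent singularity is removable; since $f$, $\tau$, and $1/\tau_\alpha$ are smooth (the interface being a smooth simple closed curve, $\tau(\alpha')=\tau(\alpha)$ only at $\alpha'=\alpha$), $g$ is a smooth periodic function of both variables, of class $C^r$ with $r$ as large as the regularity of the underlying solution allows — in particular $r>3$. Lemma~\ref{R_h_lemma} then yields $K_{3h}[f,\tau](\dot{q}_i^p)=A_{-2}(\dot{q}_i)$, which completes the proof.

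\emph{Main obstacle.} There is no substantive difficulty: this lemma is a direct analogue of Lemma~\ref{I_h_lemma}, and is in fact simpler than Lemma~\ref{J_h_lemma} because the kernel of $L_h$ has only a simple pole and hence needs a single cotangent correction rather than the $\sin^{-2}$ and $\cot$ corrections used there. The only items that require care are the bookkeeping of the Taylor expansion that pins down the coefficient $\frac{f(\alpha_i)\tau_\alpha(\alpha_i)}{2\OVL{\tau_\alpha^2(\alpha_i)}}$ and confirms the removability of the singularity in $g$, and the observation — already flagged after Lemma~\ref{R_h_lemma} — that the filter on $\dot{q}_i$ is indispensable: without it $R_h$ need only be $A_0$, so $K_{3h}[f,\tau]$ would fail to gain two derivatives.
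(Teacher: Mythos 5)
Your proposal is correct and is exactly the argument the paper intends: the paper states Lemma \ref{L_h_lemma} ``without proof'' as being ``derived similarly'' to Lemmas \ref{I_h_lemma} and \ref{J_h_lemma}, and your add-and-subtract of the single cotangent correction, identification of $\MCAL{H}_h$ via (\ref{discrete_Hilbert}), and appeal to Lemma \ref{R_h_lemma} for the filtered remainder is precisely that derivation. The Taylor-expansion bookkeeping pinning down the coefficient $\frac{f(\alpha_i)\tau_{\alpha}(\alpha_i)}{2\OVL{\tau_\alpha^2(\alpha_i)}}$ and the sign conventions in $\cot\bigpa{\frac{\alpha_i-\alpha_j}{2}}$ all check out.
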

\begin{lemma}\label{M_h_lemma}
Under the same assumption as Lemma \ref{I_h_lemma}, let
\begin{align}
M_h \OVL{\dot{\tau}}
=\frac{h}{\pi}
\DPS\sum_{\substack{j=-\frac{N}{2}+1\\(j-i){\rm\ odd}}}^{\frac{N}{2}}
\frac{f(\alpha_j)\bigpa{\tau(\alpha_j)-\tau(\alpha_i)}
\bigpa{\OVL{\dot{\tau}_j}-\OVL{\dot{\tau}_i}}}{
\bigmpa{\OVL{\tau(\alpha_j)}-\OVL{\tau(\alpha_i)}}^3
}\mbox{,}
\end{align}
and define
\begin{align}
&K_{4h}[f,\tau](\OVL{\dot{\tau}_j})
=\frac{h}{\pi}\DPS\sum_{\substack{j=-\frac{N}{2}+1\\(j-i){\rm\ odd}}}^{\frac{N}{2}}
\bigpa{\OVL{\dot{\tau}_j}-\OVL{\dot{\tau}_i}}\Biggmpa{
\frac{\bigpa{\tau(\alpha_j)-\tau(\alpha_i)}f(\alpha_j)}{
\bigmpa{\OVL{\tau(\alpha_j)}-\OVL{\tau(\alpha_i)}}^2
}
\nonumber\\
&-\frac{f(\alpha_i)\tau_{\alpha}(\alpha_i)}{4\OVL{\tau_{\alpha}^3(\alpha_j)}\sin^2\bigpa{\frac{\alpha_j-\alpha_i}{2}}}
-\frac{p[f](\alpha_i)}{2\OVL{\tau_{\alpha}^3(\alpha_i)}}
\cot\Bigpa{\frac{\alpha_j-\alpha_i}{2}}
}\mbox{,}
\end{align}
where
\begin{align}
p[f](\alpha_i)=\frac{\tau_{\alpha}(\alpha_i)f(\alpha_i)}{2}
\Bigpa{
\frac{\tau_{\alpha\alpha}(\alpha_i)}{\tau_{\alpha}(\alpha_i)}
-\frac{3\OVL{\tau_{\alpha\alpha}(\alpha_i)}}{\OVL{\tau_{\alpha}(\alpha_i)}}
}
+\tau_{\alpha}(\alpha_i)f_{\alpha}(\alpha_i)\mbox{.}
\end{align}
Then
\begin{align}
&M_h \OVL{\dot{\tau}_i}
=-\frac{f(\alpha_i)\tau_{\alpha}(\alpha_i)}{2\OVL{\tau_{\alpha}^3(\alpha_i)}}
\MCAL{H}_h (\OVL{S_h \dot{\tau}_i})
-\frac{p[f](\alpha_i)}{2\OVL{\tau_{\alpha}^3(\alpha_i)}}
\MCAL{H}_h \OVL{\dot{\tau}_i}
+K_{4h}[f,\tau](\OVL{\dot{\tau}_i})\mbox{,}
\end{align}
and $K_{4h}[f,\tau](\OVL{\dot{\tau}_i^p})=A_{-2}(\dot{\tau}_i)$.
\end{lemma}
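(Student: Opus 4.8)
The plan parallels the proofs of Lemmas \ref{J_h_lemma} and \ref{L_h_lemma} in structure: localize the singular sum $M_h\overline{\dot{\tau}}_i$ by a Laurent expansion of its kernel about the diagonal $\alpha_j=\alpha_i$, subtract off the double-pole and simple-pole parts (these are the only obstructions to smoothness, since $\tau_\alpha(\alpha_i)\neq 0$), and identify those parts with $\MCAL{H}_h(\overline{S_h\dot{\tau}_i})$ and $\MCAL{H}_h\overline{\dot{\tau}_i}$.

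Concretely, I would set $\delta=\alpha_j-\alpha_i$ and Taylor-expand $f$, $\tau$, and $\overline{\tau}$ about $\alpha_i$ in the kernel $G(\alpha_i,\alpha_j)=f(\alpha_j)\bigpa{\tau(\alpha_j)-\tau(\alpha_i)}\bigpa{\overline{\tau(\alpha_j)}-\overline{\tau(\alpha_i)}}^{-3}$, which gives $G=\frac{f\tau_\alpha}{\overline{\tau_\alpha}^3}\delta^{-2}+\frac{p[f]}{\overline{\tau_\alpha}^3}\delta^{-1}+\mbox{O}(1)$ with all quantities evaluated at $\alpha_i$, where the simple-pole coefficient
\[
p[f]=\tau_\alpha f_\alpha+\frac{\tau_\alpha f}{2}\Bigpa{\frac{\tau_{\alpha\alpha}}{\tau_\alpha}-\frac{3\overline{\tau_{\alpha\alpha}}}{\overline{\tau_\alpha}}}
\]
is exactly the expression in the statement; the factor $3$ multiplying $\overline{\tau_{\alpha\alpha}}/\overline{\tau_\alpha}$ comes from differentiating the cubed denominator. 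Using $\delta^{-2}=\frac14\csc^2(\delta/2)+\mbox{O}(1)$ and $\delta^{-1}=\frac12\cot(\delta/2)+\mbox{O}(\delta)$, I would add and subtract $\frac{f(\alpha_i)\tau_\alpha(\alpha_i)}{4\overline{\tau_\alpha^3(\alpha_i)}}\csc^2\bigpa{\frac{\alpha_j-\alpha_i}{2}}$ and $\frac{p[f](\alpha_i)}{2\overline{\tau_\alpha^3(\alpha_i)}}\cot\bigpa{\frac{\alpha_j-\alpha_i}{2}}$, each multiplied by $\bigpa{\overline{\dot{\tau}_j}-\overline{\dot{\tau}_i}}$; the leftover sum is $K_{4h}[f,\tau](\overline{\dot{\tau}_i})$. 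For the two extracted sums I would pass from the periodic to the infinite domain exactly as in the proof of Lemma \ref{J_h_lemma}, using the identity $\csc^2(z/2)=4\sum_n (z-2n\pi)^{-2}$ (see \cite{CKP:2005}) together with (\ref{H_h_disc_real_line_prop_3}) for the $\csc^2$ sum and (\ref{Disc_Hilbert_Transform_zeta_dot_eqn_208}) for the $\cot$ sum, with the discrete Hilbert transform of a constant being zero; this produces precisely the two $\MCAL{H}_h$ terms in the displayed decomposition of $M_h\overline{\dot{\tau}_i}$.

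The estimate $K_{4h}[f,\tau](\overline{\dot{\tau}_i^p})=A_{-2}(\dot{\tau}_i)$ then follows as for $K_{2h}$ and $K_{3h}$: by construction the bracketed kernel in $K_{4h}$ — the difference of $G$ and its $\csc^2$ and $\cot$ singular parts — is a bounded periodic function of $(\alpha,\alpha')$ lying in $C^r$ with $r>3$, because $\tau_\alpha=s_\alpha e^{i\theta}$ and $f$ are taken sufficiently smooth (so $m$ large suffices), and Lemma \ref{R_h_lemma} applied to the filtered difference gives the claim. The main obstacle, and the only computation not already carried out in the treatment of $I_h$, $J_h$, $L_h$, is getting the Laurent coefficients right: one must verify that the double-pole coefficient is exactly $f\tau_\alpha/\overline{\tau_\alpha}^3$ and that the residual simple-pole coefficient, once the double pole has been removed, collapses to $p[f]/\overline{\tau_\alpha}^3$. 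This is delicate because of the competing $\tau_{\alpha\alpha}$ contribution from the numerator factor $\tau(\alpha_j)-\tau(\alpha_i)$ and the $3\overline{\tau_{\alpha\alpha}}$ contribution from expanding the cubed denominator; a single sign or coefficient slip there would leave $K_{4h}$ with a $\delta^{-1}$ singularity and destroy the $C^r$ smoothness needed for Lemma \ref{R_h_lemma}. Everything else is bookkeeping identical to the cases already treated.
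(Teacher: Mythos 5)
Your proposal is correct and follows exactly the route the paper intends: the paper presents Lemma \ref{M_h_lemma} without proof, stating it is "derived similarly" to Lemmas \ref{I_h_lemma} and \ref{J_h_lemma}, and your argument is precisely that derivation — subtract the $\csc^2$ and $\cot$ singular parts, convert to the infinite-domain sums via (\ref{H_h_disc_real_line_prop_3}) and (\ref{Disc_Hilbert_Transform_zeta_dot_eqn_208}), and apply Lemma \ref{R_h_lemma} to the smooth remainder. Your Laurent computation of the coefficients (double pole $f\tau_\alpha/\overline{\tau_\alpha}^3$, simple pole $p[f]/\overline{\tau_\alpha}^3$ with the factor $3$ from the cubed denominator) checks out against the stated $p[f]$.
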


\begin{remark}
If filtering is not applied, then it is easy to see that $K_{1h}[\cdot,\cdot](\dot{q})=A_0(\dot{q})$ and
$K_{3h}[\cdot,\cdot](\dot{q})=A_0(\dot{q})$. Similarly, 
$K_{2h}[\cdot,\cdot](\dot{\tau})=A_0(\dot{\tau})$ and $K_{4h}[\cdot,\cdot](\overline{\dot{\tau}})=A_0(\dot{\tau})$.
These estimates are a consequence of Remark  \ref{R_h_A_0_no_filt_equiv_remark}.
\end{remark}

\subsection{Leading order velocity variations} \label{sec:vel_variations}
We identify the most singular terms in the variation of the complex velocity, $\dot{u}_i=\MCAL{H}_h{\dot{\omega}}+ \sum_{l=1}^4 \dot{U}_{l,i} $.
First, note that in (\ref{dot_U_1i_L})-(\ref{U3_dot}) we can replace $\omega_h$ with $\omega$,  $\tau_h$ with $\tau$, and $S_h\tau_h$ with $\tau_{\alpha}$, incurring by consistency a high-order or  $\mbox{O}(h^s)$ error.
Now, consider   $\dot{U}_1$ in (\ref{dot_U_1i_L}), and 
denote the first sum  (with the above replacements)  by $K_{0h}(\dot{\omega}_i^p)$, so that 
\begin{align}
K_{0h}(\dot{\omega}_j^p)=\frac{h}{\pi}
\DPS\sum_{\substack{j=-\frac{N}{2}+1\\(j-i)\mbox{ odd}}}^{\frac{N}{2}}
\dot{\omega}_j^p
\biggpa{
2\mbox{Re}\Bigpa{\frac{ \tau_\alpha(\alpha_j)}{\tau(\alpha_j)-\tau(\alpha_i)}}
+\cot\Bigpa{\frac{\alpha_i-\alpha_j}{2}}
}.
\end{align}
Next apply Lemmas \ref{I_h_lemma} and \ref{J_h_lemma} to find
\begin{align}\label{UI1_disc_HK_decomp}
\dot{U}_{1,i}&=
-K_{0h}(\dot{\omega}_i^p)
-K_{1h}[\omega^p,\tau](S_h\dot{\tau}_i)
-K_{1h}[\omega^p,\OVL{\tau}](\OVL{S_h\dot{\tau}_i})
\nonumber\\
&-\omega_{\alpha}^p(\alpha_i)
\Bigpa{\frac{\MCAL{H}_h\dot{\tau}_i}{2\tau_{\alpha}(\alpha_i)}
+\mbox{c.c.}
}
+K_{2h}[\omega^p\tau_{\alpha},\tau](\dot{\tau}_i)
+K_{2h}[\omega^p\OVL{\tau_{\alpha}},\OVL{\tau}](\OVL{\dot{\tau}_i})
\nonumber\\
&+\dot{U}_{1,i}^{NL}+\mbox{O}(h^s)\mbox{,}
\end{align}
where c.c. denotes the complex conjugate of the previous term.
The leading order contribution to $\dot{U}_{2,i}$ in (\ref{dot_U_2i_L}) is determined from Lemmas \ref{I_h_lemma} and \ref{J_h_lemma} as
\begin{align}\label{UI3_disc_HK_decomp}
\dot{U}_{2,i}&=-\frac{1}{2}\frac{\tau_{\alpha}(\alpha_i)}{\OVL{\tau_{\alpha}}(\alpha_i)}\MCAL{H}_h \OVL{\dot{\omega}_i^p}
+K_{1h}[\tau_{\alpha},\OVL{\tau}](\OVL{\dot{\omega}_i^p})
-\frac{\OVL{\omega^p}(\alpha_i)}{2\OVL{\tau_{\alpha}}(\alpha_i)}
\MCAL{H}_h(S_h \dot{\tau}_i)
+K_{1h}[\OVL{\omega^p},\OVL{\tau}](S_h \dot{\tau}_i)
\nonumber\\
&+\frac{\OVL{\omega^p}(\alpha_i)\tau_{\alpha}(\alpha_i)}{2\OVL{\tau}_{\alpha}^2(\alpha_i)}
\MCAL{H}_h(\OVL{S_h \dot{\tau}_i})
+\frac{1}{2\OVL{\tau}_{\alpha}^3(\alpha_i)}
\bigpa{
\OVL{\tau_\alpha}(\alpha_i)(\OVL{\omega}^p \tau_{\alpha})_{\alpha}(\alpha_i)
-\OVL{\omega}^p(\alpha_i)\tau_{\alpha}(\alpha_i)\OVL{\tau_{\alpha\alpha}}(\alpha_i)
}
\MCAL{H}_h(\OVL{\dot{\tau}_i})
\nonumber\\
&-K_{2h}[\OVL{\omega}^p\tau_{\alpha},\OVL{\tau}](\OVL{\dot{\tau}_i})
+\dot{U}_{2,i}^{NL}+\mbox{O}(h^s)\mbox{.}
\end{align}
Similarly, we find from (\ref{U3_dot}) and Lemmas \ref{J_h_lemma}-\ref{M_h_lemma} that
\begin{align}\label{UI4_disc_HK_decomp}
\dot{U}_{3,i}&=
\frac{\tau_{\alpha}(\alpha_i)}{2\OVL{\tau_{\alpha}}(\alpha_i)}
\MCAL{H}_h \OVL{\dot{\omega}_i^p}
-K_{3h}[\OVL{\tau_{\alpha}},\tau](\OVL{\dot{\omega}_i^p})
+\frac{\OVL{\omega^p}(\alpha_i)}{2\OVL{\tau_{\alpha}}(\alpha_i)}
\MCAL{H}_h(S_h \dot{\tau}_i)
+\frac{\OVL{\omega^p_\alpha}(\alpha_i)}{2\OVL{\tau_{\alpha}}(\alpha_i)}
\MCAL{H}_h \dot{\tau}_i
\nonumber\\
&-K_{2h}[\OVL{\omega^p\tau_{\alpha}},\OVL{\tau}](\dot{\tau}_i)
-\frac{\OVL{\omega^p}(\alpha_i)\tau_{\alpha}(\alpha_i)}{2\OVL{\tau_{\alpha}^2}(\alpha_i)}
\MCAL{H}_h(\OVL{S_h \dot{\tau}_i})
-K_{3h}[\OVL{\omega^p},\tau](\OVL{S_h \dot{\tau}_i})
\nonumber\\
&
-\frac{p[2 \OVL{\omega^p\tau_{\alpha}}](\alpha_i)}{2\OVL{\tau}_{\alpha}^3(\alpha_i)}
\MCAL{H}_h(\OVL{\dot{\tau}_i})
+K_{4h}[2\OVL{\omega^p\tau_{\alpha}},\tau](\OVL{\dot{\tau}_i})
+\dot{U}_{3,i}^{NL}+\mbox{O}(h^s)\mbox{,}
\end{align}
where
\begin{align}
p[2 \OVL{\omega^p\tau_{\alpha}}](\alpha_i)&=\OVL{\omega}^p(\alpha_i)
\Bigpa{
\tau_{\alpha\alpha}(\alpha_i)
\OVL{\tau_{\alpha}}(\alpha_i)
-3\OVL{\tau_{\alpha\alpha}}(\alpha_i)
\tau_{\alpha}(\alpha_i)
}
+2\tau_{\alpha}(\alpha_i)
(\OVL{\omega\tau_{\alpha}})_{\alpha}(\alpha_i)\mbox{.}
\end{align}

A simplified representation of the leading order velocity variations is
provided in Section \ref{sec:summary_variation}. Before presenting this, we consider the variation of $\omega$.

\subsection{Leading order variation of $\omega$} \label{sec:variation_omega}
For the case of viscosity matched fluids, in which $\beta=0$ and $\chi=1/2$, the 
equation for $\omega$ localizes.
Taking the variation of  (\ref{omega_til}) using (\ref{g_i_def}) with $\tilde{\omega}=0$, it is easy to see that
\begin{align}\label{omega_disc_err_decomp}
&\dot{\omega}_i
=-\frac{1}{4}\bigpa{
\dot{\MCAL{S}}_i e^{i\theta(\alpha_i)}
+\MCAL{S}(\alpha_i)(e^{i\theta})_i^{\cdot}
}\nonumber\\
&+\frac{\kappa_B}{4 s_{\alpha}^2}\bigmpa{
i\theta_{\alpha\alpha}(\alpha_i)(e^{i\theta})_i^{\cdot}
+i e^{i\theta(\alpha_i)}S_h^2\dot{\theta}_i
}\nonumber\\
&+\frac{\kappa_B}{2 s_{\alpha}}
i e^{i\theta(\alpha_i)}\theta_{\alpha\alpha}(\alpha_i)
\bigpa{\frac{1}{\sigma}}_i^{\cdot} +\dot{\omega}_i^{NL}\mbox{,}
\end{align}
where $\dot{\omega}_i^{NL}$ contains nonlinear terms or products of the variations on the right hand side.

We now give estimates for each of the terms in (\ref{omega_disc_err_decomp}).
Lemma \ref{tan_vector_est_lemma}  provides the estimate (taking $\sigma=s_\alpha=1$),
\begin{align}\label{Exp_i_theta_err}
(e^{i\theta})_i^{\cdot}
=i e^{i\theta(\alpha_i)} \dot{\theta}_i+A_{-3}(\dot{\theta}) = O(h^3)\mbox{.}
\end{align}
It is easily seen from (\ref{tension_modulus_disc_16_1})
and Lemmas \ref{product_rule_for_dots_lemma} and \ref{quotient_rule_1_over_f_lemma}, that
\begin{align}\label{dot_tension_eqn_47a1}
\dot{\MCAL{S}}_i=\frac{1+\MCAL{S}_0(\alpha_i)}{s_{0\alpha}\alpha_{0\alpha}(\alpha_i)}
\dot{\sigma}
-\frac{s_{\alpha}(1+\MCAL{S}_0(\alpha_i))}{s_{0\alpha}\alpha_{0\alpha}^2(\alpha_i)}
(D_h\dot{\alpha_0})_i+\dot{\MCAL{S}}_i^{NL}+\mbox{O}(h^s)\mbox{,}
\end{align}
where we have assumed $\dot{\MCAL{S}}_0=0$, i.e., the initial tension $\MCAL{S}_{0i}$ is exactly $\MCAL{S}_0(\alpha_i)$, and similarly $\dot{\sigma}_0=0$.
An estimate for $\dot{\MCAL{S}}_i^{NL}$ is readily obtained as
\begin{align} \label{dot_S_NL}
\dot{\MCAL{S}}_i^{NL}=
A_{-s}(\dot{\sigma})+A_{-1}(\dot{\alpha_0})\mbox{,}
\end{align}
using $\norm{\dot{\sigma}}_{\infty}\leq h^3$ and $\norm{D_h(\dot{\alpha_0})}_{\infty}\leq ch^2$.
It follows that (\ref{dot_tension_eqn_47a1}) can be written as
\begin{align}\label{num_approx_tension_cal_S}
\dot{\MCAL{S}}_i=A_{-s}(\dot{\sigma}_i)-\tilde{f}(\alpha_i)(D_h\dot{\alpha}_{0})_i+A_{-1}(\dot{\alpha}_{0i})+\mbox{O}(h^s)\mbox{,}
\end{align}
where
\begin{align}\label{tilde_f_alpha_eqn_251}
\tilde{f}(\alpha)=\frac{s_{\alpha}(1+\MCAL{S}_0(\alpha))}{s_{0\alpha}\alpha_{0\alpha}^2(\alpha)}>0
\end{align}
is a smooth, real and positive function.
The positivity of $\tilde{f}(\alpha)$ will be seen to be critical.
Indeed, it is found to be necessary for the well-posedness of the continuous equations.
We note that $\tilde{f}(\alpha_i)(D_h\dot{\alpha}_0)_i=A_0(D_h\dot{\alpha}_{0i})$, but to make energy estimates we retain the specific form in (\ref{num_approx_tension_cal_S}).

An estimate on the nonlinear term,
\begin{align}\label{omega_disc_err_dot_NL}
\dot{\omega}_i^{NL}
=A_{-1}(\dot{\theta}_i)+A_{-s}(\dot{\sigma}_i)+A_{-2}(\dot{\alpha}_{0i})
+\mbox{O}(h^s),
\end{align}
is derived at the end of this section.
We also need the following estimate for $\Bigpa{\frac{1}{\sigma}}^{\cdot}$:
\begin{align}\label{one_over_sigma_err_dot}
\Bigpa{\frac{1}{\sigma}}^{\cdot}
&=-\frac{\dot{\sigma}}{s_{\alpha}^2}
+\frac{\dot{\sigma}^2}{s_{\alpha}^2\bigpa{s_{\alpha}+\dot{\sigma}}}=O(h^3)\mbox{,}
\end{align}
which follows from from Lemma \ref{quotient_rule_1_over_f_lemma},
the boundedness $s_{\alpha}$ away from zero, and $\norm{\dot{\sigma}}_{\infty}<h^3$.
This also implies that 
\begin{align}\label{one_over_sigma_err_dot_rel}
f(\alpha_i)\Bigpa{\frac{1}{\sigma}}^{\cdot}
&=A_{-s}(\dot{\sigma}_i)\mbox{ for smooth }f\mbox{.}
\end{align}

Together, (\ref{omega_disc_err_decomp}), (\ref{Exp_i_theta_err}), (\ref{dot_tension_eqn_47a1}), (\ref{omega_disc_err_dot_NL}) and the above remarks tell us that
\begin{align}\label{dot_omega_err_decomp}
\dot{\omega}_i&=\kappa_B \frac{i e^{i\theta(\alpha_i)}}{4 s_{\alpha}^2}S_h^2\dot{\theta}_i
+A_0(\dot{\theta}_i)+A_{-s}(\dot{\sigma}_i) 
+\frac{1}{4}e^{i\theta(\alpha_i)}\tilde{f}(\alpha_i)(D_h\dot{\alpha_0})_i \nonumber \\
&+A_{-1}(\dot{\alpha}_{0i})
+\mbox{O}(h^s)\mbox{,}
\end{align}
where $\tilde{f}(\alpha)>0$ is given by (\ref{tilde_f_alpha_eqn_251}). The filtered quantity $\dot{\omega}^p_i$ satisfies the same estimate except $S_h^2 \theta_i$ in the first term is replaced by $D_h^2 \theta_i$, 
i.e.,
\begin{align}\label{dot_omega^p_err_decomp}
\dot{\omega}^p_i&=\kappa_B \frac{i e^{i\theta(\alpha_i)}}{4 s_{\alpha}^2}D_h^2\dot{\theta}_i
+A_0(\dot{\theta}_i)+A_{-s}(\dot{\sigma}_i) 
+\frac{1}{4}e^{i\theta(\alpha_i)}\tilde{f}(\alpha_i)(D_h\dot{\alpha_0})_i \nonumber \\
&+A_{-1}(\dot{\alpha}_{0i})
+\mbox{O}(h^s)\mbox{.}
\end{align}


We complete the derivation of (\ref{dot_omega_err_decomp}) and (\ref{dot_omega^p_err_decomp}) by giving some details of the estimate (\ref{omega_disc_err_dot_NL}) for the nonlinear term $\dot{\omega}_i^{{NL}}$.
This term contains, for example, products of  $(e^{-i\theta})_i^{\cdot}$ and ${\Bigpa{\frac{1}{\sigma}}}^\cdot$, each of which is $O(h^3)$, with discrete derivatives of $\dot{\theta}$ and  $\dot{\alpha_{0i}}$.  It is easy to see that these products satisfy the estimate (\ref{omega_disc_err_dot_NL}).
This verifies (\ref{dot_omega_err_decomp}) which is the main result of this section.

\subsection{Summary of velocity variation} \label{sec:summary_variation}
The velocity variation is the sum of the contributions from $\MCAL{H}_h \dot \omega_i$ and  $(\dot{u}_R)_i=\sum_{l=1}^4 \dot{U}_{l,i}$. 
We anticipate the main contribution will come from $\MCAL{H}_h \dot{\omega}_i$.
The leading order $\MCAL{H}_h \OVL{\dot{\omega}_i^p}$ term in the sum  $\dot{U}_{2,i}+\dot{U}_{3,i}$ cancels out.
This is related to the smoothness of the kernel (\ref{Gdef}) in the  velocity equation. 
The next order terms in $\dot{U}_{2,i}+\dot{U}_{3,i}$ containing $\MCAL{H}_h S_h\dot{\tau}_i$ and its conjugate also cancel out.
This has important consequences in the stability of the discrete equations in the drop evolution problem, i.e., with $\MCAL{S}=$ constant and $\kappa_B=0$.

To identify lower order terms in the velocity variation, we first apply 
Remarks  \ref{R_h_A_0_no_filt_equiv_remark} and \ref{S_h_dot_zeta_i_remark} to see   
\begin{align}\label{K_star_h_A_decomp_eqn_232}
K_{1 h}[\cdot,\cdot](S_h\dot{\tau}_i) ~~\mbox{and}~~K_{3 h}[\cdot,\cdot](S_h\dot{\tau}_i)
=A_0(\dot{\theta}_i)+A_{-s}(\dot{\sigma})\mbox{,}
\end{align}
and similarly using Lemma  \ref{zeta_dot_i_lemma},
\begin{align}\label{K_star_h_A_decomp_eqn_233}
K_{2 h}[\cdot,\cdot](\dot{\tau}_i) ~~\mbox{and}~~K_{4 h}[\cdot,\cdot](\dot{\tau}_i)
=A_{-1}(\dot{\theta}_i)+A_{-s}(\dot{\sigma}) 
\mbox{.}
\end{align}
Lemma \ref{R_h_lemma} implies that
$K_{0 h}[\cdot,\cdot](\dot{\omega}^p)
=A_{-2}(\dot{\omega})$ in view of the smoothness of the kernel, and similarly for
$K_{1 h}[\cdot,\cdot](\dot{\omega}_i^p)$ and $K_{3 h}[\cdot,\cdot](\dot{\omega}_i^p)$,
but this will not be sufficient to prove stability. Instead, we obtain a refined estimate by substituting for  $\dot{\omega}^p$  using (\ref{dot_omega^p_err_decomp}) and absorbing  smooth functions such as $e^{i \theta(\alpha_i)}$
into the kernels.
This gives
\begin{align} \label{K0h_omega_est}
K_{0 h}[\cdot,\cdot](\dot{\omega}^p) &= A_{-2}(D_h^2 \dot{\theta}_i)+A_{-2}(D_h \dot{\alpha}_{0i})
+A_{-s}(\dot{\sigma}_i) 
+\mbox{O}(h^s) \nonumber \\ 
&=A_0(\dot{\theta}_i)
+A_{-1}(\dot{\alpha}_{0i})+A_{-s}(\dot{\sigma}_i) 
+\mbox{O}(h^s),
\end{align}
and similarly for $K_{1 h}[\cdot,\cdot](\dot{\omega}^p)$ and $K_{3 h}[\cdot,\cdot](\dot{\omega}^p)$.

We also need to estimate $\dot{U}_4$ defined in (\ref{UI5_disc}).
From Lemma \ref{zeta_dot_i_lemma}, we immediately see that
\begin{align}\label{UI5_disc_decomp_A}
\dot{U}_{4,i}=A_{-1}(\dot{\theta}_i)+A_{-s}(\dot{\sigma})
+A_0(\dot{\tau}_c)\mbox{.}
\end{align}
Note that $U_4$ is the only velocity term in which $\tau_c(t)$ appears.
The other terms in our velocity decomposition, $U_1,\cdots,U_3$ depend only on the difference $\tau_j-\tau_i$, for which $\tau_c$ cancels out.
In the appendix, we show that the nonlinear term satisfies
\begin{align} \label{u_dot_nonlinear_term_est}
\dot{u}_i^{NL}
=\DPS\sum_{n=1}^4
\dot{U}_{n,i}^{NL}
=A_0(\dot{\theta}_i)+A_{-1}(\dot{\alpha}_{0i})
+A_{-s}(\dot{\sigma})
+\mbox{O}(h^s)\mbox{.}
\end{align}

In summary, the above remarks show that 
\begin{align}\label{UR_est}
(\dot{u}_R)_i
=\DPS\sum_{l=1}^4
\dot{U}_{l,i}
=A_0(\dot{\theta}_i)+A_{-1}(\dot{\alpha}_{0i})+A_{-s}(\dot{\sigma}_i)
+A_0(\dot{\tau}_c)
+\mbox{O}(h^s)\mbox{,}
\end{align}
which is the main result of this section.

\subsection{Tangential and normal velocity variations}
The discrete normal velocity is given by
\begin{align}\label{num_approx_disc_normal_vel}
(u_n)_i=\mbox{Re}\bigbra{u_i \OVL{n_i}}=\mbox{Im}\bigbra{u_i e^{-i\theta_i}}\mbox{,}
\end{align}
using $n_i=i e^{i\theta_i}$.
We need the variation $\dot{u_n}_i$.
From (\ref{num_approx_u_n_i_decomp_1}), this is
\begin{align}\label{num_approx_u_n_i_decomp_2}
(\dot{u_n})_i&=\frac{\kappa_B}{4 s_{\alpha}^2}\MCAL{H}_h(S_h^2\dot{\theta}_i)
+\frac{\kappa_B}{4}\Bigpa{\frac{1}{\sigma^2}}^{\cdot}
\MCAL{H}_h(\theta_{\alpha\alpha}(\alpha_i))
+\mbox{Im}\bigbra{
-\bigmpa{\MCAL{H}_h,e^{-i\theta(\alpha_i)}}(\dot{\omega}_i^p)\nonumber\\
-&\bigmpa{\MCAL{H}_h,(e^{-i\theta})_i^{\cdot}}(\omega^p(\alpha_i))
+(\dot{u_R})_i e^{-i\theta(\alpha_i)}+u_R(\alpha_i)(e^{-i\theta})_i^{\cdot}
}
+(\dot{u_n})_i^{NL}+\mbox{O}(h^s)
\mbox{,}
\end{align}
where $(\dot{u_n})_i^{NL}$ represents products of the variations on the right hand side.

It is straightforward to estimate each of the terms in (\ref{num_approx_u_n_i_decomp_2}).
Clearly,
\begin{align}
\frac{\kappa_B}{4}\Bigpa{\frac{1}{\sigma^2}}^{\cdot}
\MCAL{H}_h(\theta_{\alpha\alpha}(\alpha_i))
=A_{-s}(\dot{\sigma}_i)\mbox{,}
\end{align}
(cf. Lemma \ref{num_approx_S_h_f_dot_sigma_decomp_lemma}).
Using Lemma \ref{Commutator_thm_H_h_g} and the same arguments that lead to (\ref{K0h_omega_est}),
\begin{align}
\bigmpa{\MCAL{H}_h,e^{-i\theta(\alpha_i)}}(\dot{\omega}_i^p)
=A_0(\dot{\theta}_i)
+A_{-1}(\dot{\alpha}_{0i})+A_{-s}(\dot{\sigma}_i) 
+\mbox{O}(h^s).
\end{align}
It is also easy to see that  
\begin{align}\label{Disc_Hilbert_Transform_e_neg_i_theta_dot_eqn_238}
\bigmpa{\MCAL{H}_h,(e^{-i\theta})_i^{\cdot}}(\omega^p(\alpha_i))
=A_0(\dot{\theta})\mbox{.}
\end{align}
Indeed, we can write
\begin{align}
\bigmpa{\MCAL{H}_h,(e^{-i\theta})_i^{\cdot}}(\omega^p(\alpha_i))
&=\MCAL{H}_h\bigpa{(e^{-i\theta})_i^{\cdot}(\omega^p(\alpha_i))}
-\omega^p(\alpha_i)\MCAL{H}_h\bigpa{(e^{i\theta})_i^{\cdot}}
\nonumber\\
+\omega^p(\alpha_i)\MCAL{H}_h\bigpa{(e^{i\theta})_i^{\cdot}}
-&(e^{i\theta})_i^{\cdot} \MCAL{H}_h(\omega^p(\alpha_i))\mbox{,}
\end{align}
and note that the first two terms combine to form an integral operator with a smooth kernel on the (unfiltered) $(e^{i\theta})_i^{\cdot}$, while the latter two terms are clearly $A_0(\dot{\theta})$ functions.
Remark \ref{R_h_A_0_no_filt_equiv_remark} then implies (\ref{Disc_Hilbert_Transform_e_neg_i_theta_dot_eqn_238}).
An estimate for $(\dot{u}_R)_i$ is given in (\ref{UR_est}).
Finally, it is straightforward to show that 
the nonlinear term $(\dot{u}_n)^{NL}_i$ involves higher order smoothing operators.
The above arguments demonstrate that 
\begin{align}\label{dot_u_n_expression_46d_1}
(\dot{u_n})_i&=
\frac{\kappa_B}{4 s_{\alpha}^2}\MCAL{H}_h(S_h^2\dot{\theta}_i)
+A_{-1}(\dot{\alpha}_{0i})
+A_0(\dot{\theta}_i)+A_{-s}(\dot{\sigma})
+ A_0(\dot{\tau}_c)
+\mbox{O}(h^s)\mbox{.}
\end{align}

We will also need the variation in the tangential velocity $\dot{u}_s$, since this appears in the evolution equation for $\dot{\alpha}_0$.
We leave it to the reader to show, using the same arguments as for $\dot{(u_n)}_i$, that
\begin{align}\label{dot_u_s_expression_46b_1}
\dot{(u_s)}_i&=-\frac{1}{4}\MCAL{H}_h \bigpa{
\dot{\MCAL{S}}_i
}
+A_{-1}(\dot{\alpha}_{0i})
+A_0(\dot{\theta}_i)+A_{-s}(\dot{\sigma})
\nonumber\\
&+A_0(\dot{\tau}_c)
+\mbox{O}(h^s)\mbox{.}
\end{align}
The estimate in  (\ref{num_approx_tension_cal_S}) 
implies $\dot{\MCAL{S}}_i$  can be replaced by $-\tilde{f}(\alpha_i)(D_h\alpha_0)_i$,
where $\tilde{f}(\alpha)>0$ is defined in (\ref{tilde_f_alpha_eqn_251}).
It follows  that
\begin{align}
\dot{(u_s)}_i&=\frac{1}{4}\MCAL{H}_h (\tilde{f}(\alpha_i)D_h\dot{\alpha_0}_i)
+A_0(\dot{\theta}_i)+A_{-s}(\dot{\sigma})
\nonumber\\
&+\mbox{O}(\dot{\tau}_c)
+\mbox{O}(h^s)\mbox{.}\label{dot_u_s_expression_46d_2}
\end{align}

The next quantity we need to estimate is the variation of $\phi_s$.
Taking the variation of (\ref{phi_s_disc}), we find
\begin{align}\label{dot_phi_s_i_error_A_decomp_1}
(\dot{\phi_s})_i&=S_h^{-1}\bigpa{
\dot{u_n} \theta_{\alpha}(\cdot)
+u_n(\cdot) S_h \dot{\theta}
-\ip{\dot{u_n}\theta_{\alpha}(\cdot)+u_n(\cdot)S_h\dot{\theta}}_h
}_i\nonumber\\
&+\mbox{O}(h^s)
+(\dot{\phi}_s^{NL})_i\mbox{,}
\end{align}
where
\begin{align}
(\dot{\phi}_s^{NL})_i=
S_h^{-1}\bigpa{
\dot{u_n} S_h\dot{\theta}
-\ip{\dot{u_n}S_h\dot{\theta}}_h
}\mbox{.}
\end{align}
We readily obtain from Lemmas \ref{product_rule_f_S_h_g_lemma} and \ref{Int_product_rule_f_S_h_g_lemma} with (\ref{dot_u_n_expression_46d_1})
the estimate
\begin{align}\label{tangential_vel_err_decomp}
(\dot{\phi_s})_i&=
A_0(S_h\dot{\theta}_i)
+A_{-2}(\dot{\alpha_0}_i)
+A_{-s}(\dot{\sigma})
+A_0(\dot{\tau}_c)
+\mbox{O}(h^s)\mbox{.}
\end{align}
In obtaining this estimate, we have used $A_{-1}(S_h^2\dot{\theta})=A_0(S_h\dot{\theta})$, and the nonlinear terms 
are found to involve higher order smoothing operators than the  terms already present in (\ref{tangential_vel_err_decomp}).


\section{Evolution equations for the error}
An evolution equation for $\dot{\theta}$ is formed by substituting the exact solution $s_{\alpha}$, $\theta(\alpha_i)$ into (\ref{theta_i_time_der_eqn}), using consistency, and subtracting the result from (\ref{theta_i_time_der_eqn}).
This gives
\begin{align}\label{theta_i_dot_time_der_eqn_decomp}
\frac{d\dot{\theta}_i}{dt}
&=\frac{1}{s_{\alpha}}\bigpa{
S_h \dot{(u_n)}_i+\phi_s(\alpha_i) S_h(\dot{\theta}_i)
+\theta_{\alpha}(\alpha_i)\dot{(\phi_s)}_i
}\nonumber\\
&+ \left(\frac{1}{{\sigma}} \right)^\cdot\bigpa{
(u_n)_{\alpha}(\alpha_i)+\phi_s(\alpha_i)\theta_{\alpha}(\alpha_i)
}\nonumber\\
&+\Theta_i^{{NL}}+\mbox{O}(h^s)\mbox{.}
\end{align}
where the nonlinear term $\Theta_i^{{NL}}$ contains products of the variations on the right hand side.
In (\ref{theta_i_dot_time_der_eqn_decomp}), we have also used consistency to replace, for example, $S_h\theta(\alpha_i)$ with $\theta_{\alpha}(\alpha_i)$, incurring an $\mbox{O}(h^s)$ error.
It is easy to see 
that the nonlinear term satisfies
\begin{align}
\dot{\Theta}_i^{{NL}}=
A_0(\dot{\theta}_i)+A_{-s}(\dot{\sigma})
+h^3 A_0 (\dot{\tau}_c)
+\mbox{O}(h^s)\mbox{,}
\end{align}
The relation (\ref{theta_i_dot_time_der_eqn_decomp}) can be further simplified using (\ref{dot_u_n_expression_46d_1}), (\ref{tangential_vel_err_decomp}) and the first equation of (\ref{one_over_sigma_err_dot}), which give
\begin{align}\label{theta_i_dot_time_der_eqn_decomp_A}
\frac{d\dot{\theta}_i}{dt}&=\frac{\kappa_B}{4s_{\alpha}^3}
\MCAL{H}_h(S_h^3\dot{\theta}_i)
+S_h A_0(\dot{\theta}_i)+A_0(S_h \dot{\theta}_i)
+S_h A_{-1}(\dot{\alpha_0}_i)
\nonumber\\
&+A_{-s}(\dot{\sigma})
+A_0(\dot{\tau}_c)
+\mbox{O}(h^s)\mbox{.}
\end{align}

The evolution equation for $\dot{\sigma}$ is derived similarly
so that from  (\ref{sigma_i_time_der_eqn}), 
\begin{align}\label{sigma_i_dot_time_der_eqn_decomp}
\frac{d\dot{\sigma}}{dt}
=-\ip{\dot{(u_n)}\theta_{\alpha}(\cdot)}_h-\ip{u_n(\cdot)S_h\dot{\theta}}_h
-\ip{\dot{(u_n)}S_h\dot{\theta}}_h+\mbox{O}(h^s)\mbox{.}
\end{align}
The nonlinear term is estimated as
\begin{align}\label{sigma_i_dot_time_der_eqn_decomp_A}
\ip{\dot{(u_n)}S_h\dot{\theta}}_h
=A_0(\dot{\theta})+A_{-s}(\dot{\sigma})+ A_{-3}(\dot{\alpha}_{0i})
+h^2 A_0(\dot{\tau}_c)
+\mbox{O}(h^s)\mbox{,}
\end{align}
using (\ref{dot_u_n_expression_46d_1}) and the bound $\norm{S_h\dot{\theta}}_{l^2}\leq h^2$.
Equation (\ref{sigma_i_dot_time_der_eqn_decomp}) can be further simplified using Lemma \ref{product_rule_f_S_h_g_lemma}  which together with (\ref{dot_u_n_expression_46d_1}) gives 
\begin{align}\label{sigma_i_dot_time_der_eqn_decomp_A_2}
\frac{d\dot{\sigma}}{dt}&=A_0(\dot{\theta})+A_{-s}(\dot{\sigma})+ A_{-1}(\dot{\alpha}_{0i})
+ A_0(\dot{\tau}_c)
+\mbox{O}(h^s)\mbox{.}
\end{align}

We also need the variation of the evolution equation (\ref{alpha_0_t_disc_16_2}) for  $\dot{\alpha_0}$.
Let
\begin{align}
u_t(\alpha)=(\phi_s(\alpha)-u_s(\alpha))e^{i\theta(\alpha)}
\end{align}
be the difference between the tangential interface velocity at a fixed $\alpha$ and the tangential fluid velocity at $\tau(\alpha)$. 
Taking the variation of (\ref{alpha_0_t_disc_16_2}) gives
\begin{align}\label{dot_alpha_0_time_der_eqn_50_b1}
\Bigpa{\frac{d\dot{\alpha_0}}{dt}}_i
&=\frac{u_t(\alpha_i)}{s_{\alpha}e^{i\theta(\alpha_i)}}
(D_h\dot{\alpha_0})_i
-\frac{\alpha_{0\alpha}(\alpha_i)u_t(\alpha_i)}{s_{\alpha}^2 e^{i\theta(\alpha_i)}}
\dot{\sigma}
-\frac{\alpha_{0\alpha}(\alpha_i)u_t(\alpha_i)}{s_{\alpha} (e^{i\theta(\alpha_i)})^2}
(e^{i\theta_i})^{\cdot}\nonumber\\
&+\frac{\alpha_{0\alpha}(\alpha_i)}{s_{\alpha} e^{i\theta(\alpha_i)}}
(\dot{u}_t)_i+(\dot{\alpha_0}^{NL})_i+\mbox{O}(h^s)\mbox{,}
\end{align}
where $\dot{\alpha_0}^{NL}$ contains  nonlinear terms.
From (\ref{dot_u_s_expression_46d_2}) and (\ref{tangential_vel_err_decomp}), we have (taking $\chi=\frac{1}{2}$),
\begin{align}\label{u_t_i_cdot_eqn_50_c1}
(u_t)_i^{\cdot}
&=-\frac{e^{i\theta(\alpha_i)}}{4}
\MCAL{H}_h\bigpa{
\tilde{f}(\alpha_i)
D_h\dot{\alpha_0}_i
}
+A_0(S_h\dot{\theta}_i)
+A_{-s}(\dot{\sigma})
+A_{-1}(\dot{\alpha_0}_i)
\nonumber\\
&+A_0(\dot{\tau_c})
+\mbox{O}(h^s)\mbox{,}
\end{align}
where we have also used Lemma \ref{tan_vector_est_lemma}.
Define the smooth functions
\begin{align}\label{u_t_i_cdot_eqn_50_c2}
\tilde{f_1}(\alpha)=\frac{u_t(\alpha)}{s_{\alpha}e^{i\theta(\alpha)}}
\mbox{ and }
\tilde{f_2}(\alpha)=\frac{\tilde{f}(\alpha)(\alpha_0)_{\alpha}(\alpha)}{4 s_{\alpha}}
\mbox{,}
\end{align}
where, crucially,  $\alpha_{0 \alpha}(\alpha)$, $\tilde{f}(\alpha)$ (cf. (\ref{tilde_f_alpha_eqn_251})) and hence $\tilde{f_2}(\alpha)$ are all positive functions.
We note that $\alpha_{0 \alpha}(\alpha)=1$ at $t=0$, and the positive definiteness of $\alpha_{0 \alpha}$ is a consequence of $\alpha_0(\alpha)$ being a one-to-one mapping, which is related to the physical property that material fluid points cannot overlap.
Using (\ref{u_t_i_cdot_eqn_50_c1}),  (\ref{u_t_i_cdot_eqn_50_c2}), and the commutator identity Lemma \ref{Commutator_thm_H_h_g},  equation (\ref{dot_alpha_0_time_der_eqn_50_b1}) can be written
\begin{align}\label{u_t_i_cdot_eqn_50_c3}
\Bigpa{\frac{d\dot{\alpha_0}}{dt}}_i
&=\tilde{f_1}(\alpha_i)
(D_h\dot{\alpha_0})_i
-\tilde{f_2}(\alpha_i)
\MCAL{H}_h D_h\dot{\alpha_0}_i
+A_0(S_h\dot{\theta}_i)
\nonumber\\
&+A_{-s}(\dot{\sigma})
+A_{-1}(\dot{\alpha_0}_i)
+A_0(\dot{\tau_c})
+\mbox{O}(h^s)\mbox{.}
\end{align}
The nonlinear terms are smoother or smaller than terms that are already present in (\ref{u_t_i_cdot_eqn_50_c3}), as is easily verified.

Next, we derive the evolution equation for $\dot{\tau_c}$. 
This immediately follows from (\ref{tau_c_time_der_hat_u_0}), i.e.,
\begin{align}\label{zeta_c_dot_time_der_eqn}
\frac{d\dot{\tau}_c}{dt}\Big\rvert_{\alpha}
=\hat{\dot{v}}_0.
\end{align}
where $\hat{v}_0$ is the $k=0$ Fourier mode of the interface velocity (\ref{vel_eqn_9a_1}). 
Equations (\ref{theta_i_dot_time_der_eqn_decomp_A}), (\ref{sigma_i_dot_time_der_eqn_decomp_A_2}), (\ref{u_t_i_cdot_eqn_50_c3}), and (\ref{zeta_c_dot_time_der_eqn}) are the main result of this section.


\section{Energy estimates}
Recall that we have defined a time $T^*$ in (\ref{T*def}), and  
all the estimates we obtain are valid for $t\leq T^{\ast}$. We close this so-called ``$T^*$ argument'' and prove Theorem \ref{MainThm} by showing at the end that $T^{\ast}$ can be extended to $T$, the existence time for the continuous problem.

Define the energy
\begin{align}
E(t)=\dot{\sigma}^2
+\bigpa{\dot{\theta},\dot{\theta}}_h
+\bigpa{\dot{\alpha_0},\dot{\alpha_0}}_h
+\abs{\dot{\tau_c}}^2\mbox{,}
\end{align}
and take the time derivative 
\begin{align}\label{Energy_time_der_decomp_2}
\frac{1}{2}\frac{dE}{dt}&=\dot{\sigma}\dot{\sigma}_t
+\bigpa{\dot{\theta},\dot{\theta}_t}_h
+\bigpa{\dot{\alpha_0},\dot{\alpha_0}_t}_h
+2\mbox{Re}\bigpa{\OVL{\dot{\tau_c}}\dot{\tau_c}_t}\mbox{.}
\end{align}
We will bound the  above by $E(t)+O(h^s)$.

The first product on the right side of (\ref{Energy_time_der_decomp_2}) is readily bounded using (\ref{sigma_i_dot_time_der_eqn_decomp_A_2}) and Young's inequality,
\begin{align}
\dot{\sigma} \dot{\sigma}_t 
\leq cE+\mbox{O}(h^s)\mbox{.}
\end{align}
The other inner products in (\ref{Energy_time_der_decomp_2}) are bounded by making use of parabolic smoothing.  We have from (\ref{theta_i_dot_time_der_eqn_decomp_A})
\begin{align} \label{energy_deriv_theta_dot}
\bigpa{\dot{\theta},\dot{\theta}_t}_h &=
\Bigpa{\dot{\theta},
\frac{\kappa_B}{4 s_{\alpha}^3}\MCAL{H}_h(S_h^3\dot{\theta})
+S_h A_0(\dot{\theta})+A_0(S_h \dot{\theta})\\
&+S_h A_{-1}(\dot{\alpha}_{0i})+A_{-s}(\dot{\sigma})
+ A_0(\dot{\tau}_c)
+\mbox{O}(h^s)
}_h.  \nonumber
\end{align}
The first term in this inner product is evaluated as
\begin{align}\label{Energy_est_52_1}
\Bigpa{
\dot{\theta},
\frac{\kappa_B}{4 s_{\alpha}^3}\MCAL{H}_h(S_h^3\dot{\theta})
}_h
=-\frac{\kappa_B \pi}{2 s_{\alpha}^3}
\DPS\sum_{k=-\frac{N}{2}+1}^{\frac{N}{2}-1}
\abs{k}^3\abs{\hat{\dot{\theta}}_k}^2\mbox{,}
\end{align}
where we have used the discrete Parseval relation (Lemma \ref{IP_prod_lemma}) 
and  (\ref{H_h_disc_real_line_prop_1}).
The sum extends to $k=\frac{N}{2}-1$, in view of zeroing out the $\frac{N}{2}$ mode of $S_h\dot{\theta}$.
The next term is bounded using  Lemma \ref{product_rule_f_S_h_g_lemma} and Young's inequality. First, introduce a generic discrete function $\dot{g}_{1i}=A_0(\dot{\theta}_i)$. Then
\begin{align}\label{Energy_est_52_2}
\bigabs{\bigpa{\dot{\theta},S_h A_0(\dot{\theta})}_h}
&=\bigabs{\bigpa{S_h\dot{\theta},A_0(\dot{\theta})}_h}\nonumber\\
&\leq 2 \pi  \DPS\sum_{k=-\frac{N}{2}+1}^{\frac{N}{2}-1}
\abs{k}\abs{\hat{\dot{\theta}}_k}   \abs{(\widehat{\dot{g}}_1)_k}
\mbox{}
\nonumber\\
&\leq \pi \biggmpa{\DPS\sum_{k=-\frac{N}{2}+1}^{\frac{N}{2}-1}
\bigpa{
k^2 \abs{\hat{\dot{\theta}}_k}^2
+   \abs{(\widehat{\dot{g}}_1)_k}^2
}
}
\mbox{}
\nonumber\\
&\leq c\biggpa{\DPS\sum_{k=-\frac{N}{2}+1}^{\frac{N}{2}-1}
k^2\abs{\hat{\dot{\theta}}_k}^2 + E
}\mbox{,}
\end{align}
for a constant $c$.
In the last inequality we have used
\begin{align}\label{Fourier_theta_norm_est_54_1}
\DPS\sum_{k=-\frac{N}{2}+1}^{\frac{N}{2}-1}
   \abs{(\widehat{\dot{g}}_1)_k}^2
\leq\norm{\dot{g_1}}_{l^2}^2
\leq c\norm{\dot{\theta}}_{l^2}^2
\leq cE.
\end{align}
We similarly introduce a generic discrete function $\dot{g}_{2i}=A_0(S_h \dot{\theta}_i)$ and bound
\begin{align}\label{Fourier_theta_norm_est_54_2}
\bigabs{\bigpa{\dot{\theta},A_0(S_h\dot{\theta})}_h}
&\leq 2 \pi \DPS\sum_{k=-\frac{N}{2}+1}^{\frac{N}{2}}
\abs{\hat{\dot{\theta}}_k}
   \abs{(\widehat{\dot{g}}_2)_k} \nonumber\\
&\leq \pi \biggmpa{
\DPS\sum_{k=-\frac{N}{2}+1}^{\frac{N}{2}}
\bigpa{
\abs{\hat{\dot{\theta}}_k}^2
+   \abs{(\widehat{\dot{g}}_2)_k}^2
}
}\nonumber\\
&\leq c\biggpa{\DPS\sum_{k=-\frac{N}{2}+1}^{\frac{N}{2}-1}
k^2\abs{\hat{\dot{\theta}}_k}^2 + E
}\mbox{,}
\end{align}
where $c>0$ and the last inequality follows from the bound
\begin{align}
\DPS\sum_{k=-\frac{N}{2}+1}^{\frac{N}{2}} 
   \abs{(\widehat{\dot{g}}_2)_k}^2
\leq\norm{A_0(S_h\dot{\theta})}_{l^2}^2
\leq c\norm{S_h\dot{\theta}}_{l^2}^2
=c\DPS\sum_{k=-\frac{N}{2}+1}^{\frac{N}{2}-1}
k^2\abs{\hat{\dot{\theta}}_k}^2\mbox{.}
\end{align}
The inner product $(\dot{\theta}, S_h A_{-1}(\dot{\alpha}_{0})) = -(S_h \dot{\theta},  A_{-1}(\dot{\alpha}_{0}))$ is controlled following the same analysis as in  (\ref{alpha_0_dot_energy}) below. 
The remaining terms in (\ref{Energy_time_der_decomp_2}) are bounded as
\begin{align}
\bigabs{\bigpa{\dot{\theta},
A_{-s}(\dot{\sigma})
+A_0(\dot{\tau}_c)
+\mbox{O}(h^s)
}_h
}\leq cE+\mbox{O}(h^s)\mbox{.}
\end{align}

Next, we estimate the inner product $(\dot{\alpha_0}_t,\dot{\alpha_0})_h$ in (\ref{Energy_time_der_decomp_2}).
Substitute (\ref{u_t_i_cdot_eqn_50_c3}) for $\dot{\alpha_0}_t$ to obtain
\begin{align}\label{alpha_0_dot_t_energy_est_eqn_52a_1}
\bigpa{\dot{\alpha_0}_t,\dot{\alpha_0}}_h
&=
\bigpa{
\tilde{f_1}(\cdot)
D_h\dot{\alpha_0}
,\dot{\alpha_0}
}_h
-\bigpa{\tilde{f_2}(\cdot)
\MCAL{H}_h (D_h\dot{\alpha_0})
,\dot{\alpha_0}
}_h
+\bigpa{A_0(S_h\dot{\theta})
\nonumber\\
&+A_{-s}(\dot{\sigma})
+A_{-1}(\dot{\alpha_0})
+A_0(\dot{\tau_c})
,\dot{\alpha_0}
}_h
+\mbox{O}(h^s)\mbox{.}
\end{align}
The first inner product on the right hand side of (\ref{alpha_0_dot_t_energy_est_eqn_52a_1}), which can be written $\bigpa{\tilde{f_1}(\cdot)\dot{\alpha_0},D_h\dot{\alpha_0}}_h$, is estimated using Lemmas  \ref{product_rule_for_D_h_lemma} and \ref{product_rule_f_S_h_g_lemma}  as
\begin{align}\label{f_1_tilde_D_h_alpha_0_dot_t_energy_est_eqn_52b_1}
\bigpa{\tilde{f_1}(\cdot)\dot{\alpha_0},D_h\dot{\alpha_0}}_h
&=-\bigpa{
D_h(\tilde{f_1}(\cdot)\dot{\alpha_0})
,\dot{\alpha_0}
}_h\nonumber\\
&=-\bigpa{
\tilde{f_1}(\cdot)D_h\dot{\alpha_0}
+\dot{\alpha}_0^q\tilde{f}_{1\alpha}(\cdot)
+ A_{-1} (\dot{\alpha_0})
,\dot{\alpha_0}
}_h\mbox{.}
\end{align}
Move the first inner product on the right hand side of (\ref{f_1_tilde_D_h_alpha_0_dot_t_energy_est_eqn_52b_1}) to the left hand side
(also moving the real function $\tilde{f_1}$ to the other side of the inner product) to obtain
\begin{align}
2\bigpa{
\tilde{f_1}(\cdot)\dot{\alpha_0}
,D_h\dot{\alpha_0}
}_h
=-\bigpa{\dot{\alpha}_0^q\tilde{f}_{1\alpha}(\cdot)
+ A_{-1} (\dot{\alpha_0})
,\dot{\alpha_0}
}_h\mbox{.}
\end{align}
This shows that the inner product on the left is bounded by the energy, i.e.,
\begin{align}
\bigabs{
\bigpa{\tilde{f_1}(\cdot)\dot{\alpha_0},D_h\dot{\alpha_0}}_h
}\leq cE\mbox{.}
\end{align}

The second inner product on the right hand side of (\ref{alpha_0_dot_t_energy_est_eqn_52a_1}) can be written $-\bigpa{\Lambda_h^p\dot{\alpha_0},\tilde{f_2}(\cdot)\dot{\alpha_0}}_h$, where we have defined $\Lambda_h^p=\MCAL{H}_h D_h$.
To bound this inner product, we make essential use of the positive definiteness of $\tilde{f_2}$.
We first write:
\begin{align}\label{Lambda_h_p_dot_alpha_0_inner_prod_f_2_tilde_dot_alpha_0_eqn_52e1}
-\bigpa{
\Lambda_h^p\dot{\alpha_0},\tilde{f_2}(\cdot)\dot{\alpha_0}
}_h
=-\bigpa{
\sqrt{\tilde{f_2}(\cdot)}\Lambda_h^p\dot{\alpha_0}
,\sqrt{\tilde{f_2}(\cdot)}\dot{\alpha_0}
}_h\mbox{,}
\end{align}
then move $\sqrt{\tilde{f_2}(\cdot)}$ inside the argument of the operator $\Lambda_h^p$, which by Lemma \ref{Commutator_thm_H_h_g} and the discrete product rule Lemma \ref{product_rule_for_D_h_lemma} introduces a commutator and other terms whose inner product with $\dot{\alpha}_0$ can be bounded by energy.
If we define $\dot{\tilde{\alpha}}_0=\sqrt{\tilde{f_2}(\cdot)}\dot{\alpha}_0$, then the preceding statements imply that
\begin{align}\label{f_2_tilde_Lambda_h_p_alpha_0_dot_inner_prod_eqn_52g_1}
-\bigpa{
\sqrt{\tilde{f_2}(\cdot)}\Lambda_h^p\dot{\alpha_0}
,\sqrt{\tilde{f_2}(\cdot)}\dot{\alpha_0}
}_h
=-\bigpa{
\Lambda_h^p\dot{\tilde{\alpha}}_0
,\dot{\tilde{\alpha}}_0
}_h
+r,
\end{align}
where $r\in\MBB{R}$ satisfies $\abs{r}<cE$.
The inner product on the right hand side of (\ref{f_2_tilde_Lambda_h_p_alpha_0_dot_inner_prod_eqn_52g_1}) satisfies
\begin{align}\label{Lambda_h_p_dot_tilde_alpha_0_inner_prod_dot_tilde_alpha_0_eqn_298}
\bigpa{
\Lambda_h^p\dot{\tilde{\alpha}}_0
,\dot{\tilde{\alpha}}_0
}_h
= 2 \pi \DPS\sum_{k=-\frac{N}{2}+1}^{\frac{N}{2}}
\abs{k}\rho(kh)\abs{\dot{\tilde{\alpha}}_0}^2
>0
\mbox{.}
\end{align}
Combining (\ref{Lambda_h_p_dot_alpha_0_inner_prod_f_2_tilde_dot_alpha_0_eqn_52e1})-(\ref{Lambda_h_p_dot_tilde_alpha_0_inner_prod_dot_tilde_alpha_0_eqn_298}) shows that
\begin{align}
\bigpa{
\Lambda_h^p\dot{\alpha}_0
,\tilde{f_2}(\cdot)\dot{\alpha_0}
}_h
\leq cE\mbox{,}
\end{align}
which gives the desired estimate on the second inner product in (\ref{alpha_0_dot_t_energy_est_eqn_52a_1}).

The third inner product that we need to estimate is $\bigpa{A_0(S_h\dot{\theta}),\dot{\alpha_0}}_h$.
This is bounded using Young's inequality as
\begin{align}
\bigabs{
\bigpa{A_0(S_h\dot{\theta}),\dot{\alpha_0}}_h
}&\leq \frac{1}{2}
\bigpa{
\norm{A_0(S_h\dot{\theta})}_{l^2}^2
+\norm{\dot{\alpha_0}}_{l^2}^2
}\mbox{,}\nonumber\\
&\leq c\bigpa{
\norm{S_h\dot{\theta}}_{l^2}^2
+\norm{\dot{\alpha_0}}_{l^2}^2
}\mbox{,}\nonumber\\
&\leq c\Bigpa{
\DPS\sum_{k=-\frac{N}{2}+1}^{\frac{N}{2}-1}
\abs{k}^2
\abs{\hat{\dot{\theta}}_k}^2
+E
}\mbox{.} \label{alpha_0_dot_energy}
\end{align}
The first sum above is controlled by parabolic smoothing (i.e. by the dominant contribution from the leading order term (\ref{Energy_est_52_1})).

The other inner products in (\ref{alpha_0_dot_t_energy_est_eqn_52a_1}) are clearly bounded by $cE$.
Putting these estimates together, we obtain the bound
\begin{align}
\bigpa{\dot{\alpha_0}_t,\dot{\alpha_0}}_h
&\leq c\Bigpa{
\DPS\sum_{k=-\frac{N}{2}+1}^{\frac{N}{2}-1}
\abs{k}^2
\abs{\hat{\dot{\theta}}_k}^2
+E
}+\mbox{O}(h^s)\mbox{.}
\end{align}

The final term in (\ref{Energy_time_der_decomp_2}) is estimated using (\ref{zeta_c_dot_time_der_eqn}) and Young's inequality as
\begin{align}
2\abs{\mbox{Re}\bigpa{\OVL{\dot{\tau_c}}\dot{\tau_c}_t}}
&\leq 2\abs{\dot{\tau_c}}\abs{\dot{\tau_c}_t}\mbox{,}
\nonumber\\
&\leq 2\abs{\dot{\tau_c}}
\abs{\widehat{\dot{v}}_0}\mbox{,}\nonumber\\
&\leq \abs{\dot{\tau_c}}^2
+\norm{
\dot{v}
}_{l^2}^2\mbox{,}
\end{align}
where we recall that $\dot{v}=\bigpa{u_n i e^{i\theta}+\phi_s e^{i\theta}}^{\cdot}$.
From expressions for $\dot{u_n}$ and $\dot{\phi_s}$ given in (\ref{dot_u_n_expression_46d_1}) and (\ref{tangential_vel_err_decomp}), it is easy to see that
\begin{align}
\norm{
\dot{v}
}_{l^2}^2
&\leq c\bigpa{
\norm{S_h\dot{\theta}}_{l^2}^2
+\norm{A_{-1}(\dot{\alpha}_0)}_{l^2}^2
+\abs{\dot{\tau_c}}^2
+\norm{A_0(\dot{\theta})}_{l^2}^2
\nonumber\\
&+\norm{A_{-s}(\dot{\sigma})}_{l^2}^2
+\mbox{O}(h^s)
}\mbox{.}
\end{align}
Therefore,
\begin{align}\label{Energy_zeta_c_est_57_1}
2\abs{\mbox{Re}\bigpa{\OVL{\dot{\tau_c}}\dot{\tau_c}_t}}
&\leq c\Bigpa{
\DPS\sum_{k=-\frac{N}{2}+1}^{\frac{N}{2}-1}
\abs{k}^2
\abs{\hat{\dot{\theta}}_k}^2
+cE+\mbox{O}(h^s)
}\mbox{,}
\end{align}
which gives the desired bound on the last term in  (\ref{Energy_time_der_decomp_2}).

We now put these estimates together.
First, set
\begin{align}
d_1=\DPS\min_{0\leq t\leq T}\frac{\kappa_B \pi}{2s_{\alpha}}\mbox{,}
\end{align}
and note by assumptions on $s_{\alpha}$ that 
$d_1$ is bounded away from zero and infinity.
Then from the above estimates,
there exists positive constants $d_2,d_3$, such that (\ref{Energy_time_der_decomp_2}) can be bounded as
\begin{align}\label{Energy_time_der_est_59_1}
\frac{dE}{dt}\leq \DPS\sum_{k=-\frac{N}{2}+1}^{\frac{N}{2}-1}
\bigpa{
-d_1\abs{k}^3+d_2 k^2
}\abs{\hat{\dot{\theta}}_k}^2
+d_3 E
+\mbox{O}(h^s)\mbox{.}
\end{align}
Equation (\ref{Energy_time_der_est_59_1}) can be written as
\begin{align}
\frac{dE}{dt}
&\leq \DPS\sum_{k=-\frac{N}{2}+1}^{\frac{N}{2}-1}
d_4\abs{\hat{\dot{\theta}}_k}^2
+d_3 E
+\mbox{O}(h^s)\mbox{,}
\end{align}
where
\begin{align}
d_4=\DPS\max_{-\frac{N}{2}+1\leq k\leq\frac{N}{2}-1}
\bigpa{
-d_1\abs{k}^3
+d_2 k^2
}\mbox{,}
\end{align}
and note that $d_4$ is bounded away from zero and infinity.
It readily follows that there exists a positive constant $c$ such that
\begin{align}\label{Energy_time_der_est_60_1}
\frac{dE}{dt}\leq cE+\mbox{O}(h^s)\mbox{, with }
E(0)=0
\end{align}
for $t\leq T^{\ast}$, which is the main result of this section.

Stability and convergence of our numerical method now follows from
application of Gronwall's inequality to (\ref{Energy_time_der_est_60_1}), which  gives
\begin{align}
E(t)\leq c h^s t(1+e^t)\mbox{ for } t\leq T^{\ast}\mbox{,}
\end{align}
or
\begin{align}
E(t)\leq c(T^{\ast}) h^s \mbox{,}
\end{align}
It follows that
\begin{align}
\norm{\dot{\sigma}}_{l^2}^2\mbox{, }
\norm{\dot{\theta}}_{l^2}^2\mbox{, }
\norm{\dot{\alpha}_0}_{l^2}^2\mbox{, }
\norm{\dot{\tau}}_{l^2}^2\leq c(T^{\ast}) h^s\mbox{,}
\end{align}
where we have used $\norm{\dot{\tau}}_{l^2}^2\leq cE$, which follows from Lemma \ref{zeta_dot_i_lemma} and $\left| {\dot{\tau_c}} \right|^2\leq cE$.
We choose $m$ large enough, so that $s$ can be picked to satisfy $s\geq 8$.
(Recall that $m$ characterizes the smoothness of the continuous solution, and $s$ is near $m$).
Then
\begin{align}\label{sigma_theta_zeta_l2_norm_sq_bd_est_61_2}
\norm{\dot{\sigma}}_{l^2}\mbox{, }
\norm{\dot{\theta}}_{l^2}\mbox{, }
\norm{\dot{\alpha_0}}_{l^2}\mbox{, }
\norm{\dot{\tau}}_{l^2}\leq c(T^{\ast}) h^{\frac{s}{2}}
<h^{\frac{7}{2}}
\end{align}
for $h$ small enough.
It follows from the definition (\ref{T*def}) that we can extend $T^{\ast}$ to $T^{\ast}=T$, so that the bounds (\ref{sigma_theta_zeta_l2_norm_sq_bd_est_61_2}) are valid throughout the entire interval $0\leq t\leq T$ in which a smooth continuous solution exists.
This completes the proof of the convergence of our method for $\beta=0$, $\chi=\frac{1}{2}$ and $\kappa_B>0$.

\section{Unequal viscosities ($\beta\neq 0$).} \label{sec:unequal_viscosities}
The case $\beta\neq 0$ corresponds to a viscosity contrast between the internal and external fluids.
In this case, we must account for the additional nonlocal equation (\ref{I_plus_beta_K_matrix_eqn_2_1}) and term $\bm{\tilde{\omega}}$.

We start by taking the variation of (\ref{I_plus_beta_K_matrix_eqn_2_1}), which is written as
\begin{align} \label{variation_int_eqn_omega}
(\MBF{I}+\beta\MBF{K})\dot{\bm{\tilde{\omega}}}_i
=-\beta\dot{\MBF{K}}
\left( \bm{\tilde{\omega}}(\alpha_i)+\MBF{g}^p(\alpha_i) \right)- \beta \MBF{K} \dot{\MBF{g}}^p_i
\mbox{,}
\end{align}
where, e.g., 
\begin{align}
\dot{\MBF{K}}\bm{\tilde{\omega}}(\alpha_i)
&=\DPS\sum_{\substack{j=-\frac{N}{2}+1\\(j-i)\mbox{ odd}}}^{\frac{N}{2}}
\begin{pmatrix}
\bigpa{\dot{K}_R^{(1)}}_{i,j} + \bigpa{ \dot{K}_R^{(2)}}_{i,j}  & -\bigpa{\dot{K}_I^{(2)}}_{i,j} \\
\bigpa{\dot{K}_I^{(2)}}_{i,j} & \bigpa{\dot{K}_R^{(1)}}_{i,j} -\bigpa{\dot{K}_R^{(2)}}_{i,j} 
\end{pmatrix}
\begin{pmatrix}
\tilde{\omega}_1(\alpha_j) \\
\tilde{\omega}_2(\alpha_j)
\end{pmatrix}
(2h)
\mbox{,}
\end{align}
and $\dot{\MBF{g}}^p$ is the  variation of the filtered version of   (\ref{g_i_def}).
Here we have defined, e.g.,
\begin{align}
{\MBF{K}}\dot{\bm{\tilde{\omega}}}_i
=\MBF{K}_{i,j}\dot{\bm{\tilde{\omega}}}_j
=\bigpa{
\MBF{K}_h(\alpha_i,\alpha_j)
+\dot{\MBF{K}}_{i,j}}\dot{\bm{\tilde{\omega}}}_j
,
\end{align}
which contains both linear and nonlinear terms in the variation.
We now use the fact that the kernels $\bigpa{K_R^{(1)}}_{i,j}$, $\bigpa{K_R^{(2)}}_{i,j}$, and $\bigpa{K_I^{(2)}}_{i,j}$ are simple modifications of 
the kernels in (\ref{UI12_disc_est}), (\ref{UI4_disc_est}),
so that  the same analysis as in  Section \ref{sec:summary_variation} shows that 
\begin{align} \label{dot_Kh_est}
\dot{\MBF{K}} \left( {\bm{\tilde{\omega}}(\alpha_i)+\MBF{g}}^p(\alpha_i) \right)
=A_0(\dot{\theta}_i)+A_{-s}(\dot{\sigma})+A_{-1}(\dot{\alpha_{0i}})
+\mbox{O}(h^s)\mbox{}
\end{align}
(the above relation is defined as holding for each component of the vector).
Following the same argument as that leading to (\ref{K0h_omega_est}),  we also have
\begin{align} \label{K_dot_gp_est}
    \MBF{K} {\MBF{\dot{g}}}^p_i=A_0(\dot{\theta}_i)+A_{-s}(\dot{\sigma})+A_{-1}(\dot{\alpha_{0i}})
    +A_0(\dot{\tau}_c)
+\mbox{O}(h^s)\mbox{.}
\end{align}

Together, the above results imply that  (\ref{variation_int_eqn_omega}) can be written as
\begin{align} \label{omegatil_bound}
(\MBF{I}+\beta\MBF{K})\dot{\bm{\tilde{\omega}}}_i
=A_0(\dot{\theta})+A_{-s}(\dot{\sigma})+A_{-1}(\dot{\alpha_0}_i) +A_0(\dot{\tau}_c)
+\mbox{O}(h^s)\mbox{.}
\end{align}
Apparently, if $(\MBF{I}+\beta\MBF{K})^{-1}$ is bounded, then we can bound $\dot{\bm{\tilde{\omega}}}$ by the right-hand side of (\ref{omegatil_bound}).
This implies that $\dot{\bm{\tilde{\omega}}}$ does not contribute to the leading order analysis, and the energy estimates for $\beta \ne 0$ proceed exactly as in the case $\beta=0$.
The bound on $(\MBF{I}+\beta\MBF{K})^{-1}$ is given by the following lemma:
\begin{lemma}\label{I_plus_beta_K_matrix_lemma_6_1}
Assume $\theta(\cdot,t)\in C^s$ and $s_{\alpha}\neq 0$.
Then there exists constants $h_0>0$, $\beta_0>0$, and  $c>0$, such that for all $h$ with $0<h\leq h_0$, and $\beta$ with $0<\beta<\beta_0$, $(\MBF{I}+\beta\MBF{K})^{-1}\leq c$.
\end{lemma}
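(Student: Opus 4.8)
The plan is to invert $\MBF{I}+\beta\MBF{K}$ by a Neumann series, which reduces the lemma to a single uniform-in-$h$ bound on the $l^2\to l^2$ operator norm of $\MBF{K}$. The starting point is that each scalar kernel $\bigpa{K_R^{(1)}}_{i,j}$, $\bigpa{K_R^{(2)}}_{i,j}$, $\bigpa{K_I^{(2)}}_{i,j}$ in (\ref{K_omega_disc_kernel_eqn_3_1}) is the grid restriction of a function $k(\alpha,\alpha')$ that is smooth (indeed $C^r$ for $r$ as large as the regularity of $\theta$ allows) in both variables on $[-\pi,\pi]^2$. Writing $S_h\tau_j=\sigma e^{i\theta(\alpha_j)}=\tau_\alpha(\alpha_j)$ as permitted, one checks by Taylor-expanding $\tau$ about $\alpha$ that the apparent singularity at $\alpha'=\alpha$ is removable: the leading $(\alpha'-\alpha)^{-1}$ contribution of $\tau_\alpha(\alpha')/(\tau(\alpha')-\tau(\alpha))$ is real and so drops out of $\mathrm{Im}(\cdot)$ in $K_R^{(1)}$, while in the $K^{(2)}$ kernels the two terms combine so that the $(\alpha'-\alpha)^{-2}$ and $(\alpha'-\alpha)^{-1}$ singularities cancel; this is the standard smoothing property of the Sherman--Lauricella double-layer kernel on a $C^s$ curve with $s_\alpha\neq 0$, and the diagonal limits are exactly those recorded in the consistency computation. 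Hence the $2\times2$ matrix kernel $\bigpa{\MBF{K}_M}_{i,j}$ in (\ref{K_omega_matrix_eqn_3_0}) is the grid restriction of a continuous matrix-valued function on $[-\pi,\pi]^2$, and $\MBF{K}$ is, componentwise, exactly an operator of the form $R_h$ in (\ref{RHDEFN}).

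I would then bound $\norm{\MBF{K}}_{l^2\to l^2}$ by the Schwarz-inequality estimate already displayed following Remark~\ref{R_h_A_0_no_filt_equiv_remark}, which gives $\norm{R_h(\phi)}_{l^2}\leq 4\norm{f}_{l^2}\norm{\phi}_{l^2}$ with $\norm{f}_{l^2}^2=h^2\sum_{i,j}\abs{f(\alpha_i,\alpha_j)}^2$. Since $\MBF{K}_M$ is continuous on $[-\pi,\pi]^2$, the quantity $\norm{\MBF{K}_M}_{l^2}^2$ is a Riemann sum converging to $\iint\abs{\MBF{K}_M(\alpha,\alpha')}^2\,d\alpha\,d\alpha'<\infty$, so there exist $h_0>0$ and a constant $C$, depending only on $\norm{\theta}_{C^s}$ and on the lower bound for $s_\alpha$, with $\norm{\MBF{K}}_{l^2\to l^2}\leq C$ for all $0<h\leq h_0$. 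No filtering enters here: this is the unfiltered $R_h=A_0$ statement of Remark~\ref{R_h_A_0_no_filt_equiv_remark}.

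Finally, set $\beta_0=1/(2C)$. For $0<\beta<\beta_0$ and $0<h\leq h_0$ one has $\norm{\beta\MBF{K}}_{l^2\to l^2}\leq 1/2$, so $(\MBF{I}+\beta\MBF{K})^{-1}=\sum_{n\geq 0}(-\beta\MBF{K})^n$ converges in operator norm with
\[
\norm{(\MBF{I}+\beta\MBF{K})^{-1}}_{l^2\to l^2}\leq\frac{1}{1-\beta C}\leq 2=:c .
\]
This also shows that (\ref{I_plus_beta_K_matrix_eqn_2_1}) is uniquely solvable by the method of successive approximations, and, applying the argument with $\MBF{K}=\MBF{K}_h$ (whose kernel is likewise smooth), that $(\MBF{I}+\beta\MBF{K}_h)\bm{r}_i\neq 0$ whenever $\bm{r}_i\neq 0$, as used in the derivation of (\ref{omh_est}). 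When the lemma is invoked in the stability analysis, $\theta$ is replaced by the numerical solution $\theta^{(h)}$, which for $t\leq T^\ast$ stays in a fixed $C^s$-neighborhood of the exact $\theta$, so the constants $C$, $\beta_0$, $h_0$ may be chosen uniformly in $t$. The only genuinely substantive step is the first one --- verifying that the Sherman--Lauricella kernels extend smoothly across the diagonal so that the Riemann-sum bound is uniform in $h$; everything afterwards is the standard Neumann-series argument.
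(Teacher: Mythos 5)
Your proof is correct, but it takes a genuinely different route from the paper's. You bound the discrete operator norm directly: the Sherman--Lauricella kernels have removable singularities on the diagonal (as the paper verifies in the consistency section), so $\MBF{K}$ is componentwise an operator of the form $R_h$ with a kernel bounded uniformly in $h$, the Schwarz-inequality estimate gives $\norm{\MBF{K}}_{l^2\to l^2}\leq C$ with $C$ independent of $h$, and the Neumann series then yields the stated bound for $\beta<1/(2C)$. The paper instead follows the strategy of Beale--Hou--Lowengrub: it embeds the discrete system exactly into a continuous integral equation with piecewise-constant (odd/even indexed) densities, writes the result as the continuous Sherman--Lauricella operator $\MBF{I}\pm\beta\MBF{K}_c$ plus an $O(h)$ bounded perturbation, and transfers the invertibility of the continuous operator to the discrete one. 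As written, both arguments only deliver the perturbative regime of small $\beta$, which is all the lemma and Theorem \ref{MainThm} claim, so your shorter, self-contained argument suffices. What the paper's heavier machinery buys is extensibility: since the continuous second-kind Fredholm operator is in fact invertible for the full physical range of $\beta$ (not merely for $\beta$ small enough that successive approximation converges), the transference argument is the natural vehicle for removing the smallness restriction on the viscosity contrast, whereas a Neumann series is intrinsically limited to $\beta\norm{\MBF{K}}<1$. One small caution on your closing remark: for $t\leq T^{\ast}$ the numerical solution is close to the exact one only in $l^2$ and $l^{\infty}$ (together with the bound on $S_h\dot{\tau}$ from Remark \ref{S_h_dot_zeta_i_remark}), not in a $C^s$ norm; these discrete bounds are what keep the numerically-built kernel uniformly close to the smooth one, and they are sufficient, but the phrase ``fixed $C^s$-neighborhood'' overstates what is available.
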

The proof of Lemma \ref{I_plus_beta_K_matrix_lemma_6_1} will be deferred to the appendix.

\section{Drop with constant surface tension}

Finally, we consider the case of a drop interface with zero membrane bending stress and constant surface tension,  $\kappa_B=0$ and  $\MCAL{S}=1$, at first for equal interior and exterior viscosities or $\beta=0$. As described in Section \ref{sec:numerical_method}, the discretization  is then modified from that for an elastic interface. Since the highest, second derivative term is eliminated from (\ref{g_i_def}), the stability analysis must also be modified.  

When $\kappa_B=0$ and $\MCAL{S}=1$, we have from (\ref{omega_disc_err_decomp}), (\ref{Exp_i_theta_err}), and Lemma \ref{G_h_p_lemma},
\begin{align} \label{dot_omega_p}
\dot{\omega}_i^p = h(\alpha_i) {\dot{\theta}_i}^p + A_{-1}(\dot{\theta}_i)+A_{-s}(\dot{\sigma}),
\end{align}
where $h(\alpha)=i e^{i \theta(\alpha)}$ is a smooth function. The velocity is decomposed as in (\ref{vel_decomp_drop}), and we adapt the analysis in Section 
\ref{sec:summary_variation} to estimate $(\dot{u}_R)_i$. In particular, we make use of the additional filtering for the drop problem and  the smoothing properties of convolutions to replace (\ref{K_star_h_A_decomp_eqn_232}), (\ref{K_star_h_A_decomp_eqn_233}), etc., with the improved estimates
\[
K_{lh}[\cdot,\cdot](\dot{\omega}_i^p)=A_{-1}(\dot{\theta}_i)+A_{-s}(\dot{\sigma}),
\]
for $l=0,1,3$, and
\[
K_{lh}[\cdot,\cdot](D_h \dot{\tau}_i)=A_{-2}(\dot{\theta}_i)+A_{-s}(\dot{\sigma}),
\]
for $l=1,3$. In making the former estimate we absorb the smooth function $h(\alpha)$ 
into the kernel.  The analysis of the nonlinear term in the velocity variation is also modified.  Specifically, we use (\ref{dot_omega_p}) to obtain the improved estimate
\begin{align} \label{u_nonlinear_drop}
\dot{u}_i^{NL} = h^{3/2} A_0(\dot{\theta_i}) + A_{-s} (\dot{\sigma}) + O(h^s).
\end{align}
It follows that 
\begin{equation} \label{dot_ur_drop}
(\dot{u}_R)_i=A_{-1}(\dot{\theta_i}) + A_{-s} (\dot{\sigma}) + A_0(\dot{\tau}_c)+ O(h^s).
\end{equation}

The variation of the normal velocity is given by  (\ref{num_approx_u_n_i_decomp_2}) with $\kappa_B=0$.  The first commutator there  is estimated as $A_{-2}(\dot{\theta})+A_{-s} (\dot{\sigma})$ after inserting (\ref{dot_omega_p}), absorbing $h(\alpha_i)$ into the kernel, and using the smoothing property in Lemma \ref{Commutator_thm_H_h_g}. 
We use (\ref{Exp_i_theta_err}) to  write the second commutator as the sum   $-(1/4) \MCAL{H}_h \dot{\theta}_i+
h_1(\alpha_i) \dot{\theta}_i$ plus some smoother terms, where  $h_1(\alpha)$ is a smooth real function.  The remaining terms in (\ref{num_approx_u_n_i_decomp_2}) are smoother. The final estimate is
\begin{align}
    (\dot{u}_n)_i=-\frac{1}{4} \MCAL{H}_h \dot{\theta}_i^p+h_1(\alpha_i) \dot{\theta}_i + A_{-1}(\dot{\theta_i}) + A_{-s} (\dot{\sigma}) + A_0(\dot{\tau}_c)+ O(h^s).
\end{align}
From (\ref{dot_phi_s_i_error_A_decomp_1}) and Lemma \ref{Int_product_rule_f_S_h_g_lemma}, we also  find that 
\[
(\dot{\phi}_s)_i = A_{0}(\dot{\theta_i}) + A_{-s} (\dot{\sigma}) + A_0(\dot{\tau}_c)+ O(h^s),
\]
and it is easy to see that $\dot{u}_s$ satisfies the same estimate as $\dot{\phi}_s$.

\noindent
\underline{\textit{Energy estimates}}.
\newline

The energy is defined as
\begin{align}
E(t)=\dot{\sigma}^2
+\bigpa{\dot{\theta},\dot{\theta}}_h
+\abs{\dot{\tau_c}}^2\mbox{,}
\end{align}
and we form $dE/dt$ as in (\ref{Energy_time_der_decomp_2}).  Two inner products 
from  $(\dot{\theta}, \dot{\theta}_t)_h$ 
involve a derivative of $\dot{\theta}$.
These are  $(\dot{\theta}, -(1/4) \Lambda_h^p \dot{\theta})_h$ and  $(\dot{\theta}, h_1(\cdot) D_h
\dot{\theta})_h$, where we recall that $h_1(\alpha)$ is a smooth real function. The first of these is negative definite and thus decreases the energy, and the second
can be estimated with the aid of (\ref{eq:discrete_product_rule}) as
\begin{align}
    (\dot{\theta},h_1(\cdot) D_h \dot{\theta})&=(\dot{\theta}, D_h (h_1 (\cdot)  \dot{\theta}))+(\dot{\theta}, A_0(\dot{\theta}) ) \\
    &= -(D_h \dot{\theta}, h_1(\cdot) \dot{\theta})_h+(\dot{\theta}, A_0(\dot{\theta})),
\end{align}
so that 
\begin{align}
     (\dot{\theta}, h_1(\cdot) D_h \dot{\theta})_h = (\dot{\theta}, A_0(\dot{\theta})).
\end{align}
This shows that $(\dot{\theta}, h_1(\cdot) D_h \dot{\theta})_h$ is bounded by the energy. It is easily seen that all the other inner products in $dE/dt$ can be bounded by the energy, from which the convergence of the method  readily follows for $\kappa_B=0$, $\MCAL{S}=1$, and $\beta=0$.

\noindent
\newline
\underline{\textit{Drop with viscosity contrast ($\beta \ne 0$)}}.
\newline

The analysis for a drop with $\beta \ne 0$ follows that in Section \ref{sec:unequal_viscosities} with a few minor changes. In view of the additional filtering in the numerical method for the drop problem,
the right hand sides of the estimates (\ref{dot_Kh_est})-(\ref{omegatil_bound}) can be improved to $A_{-1}(\dot{\theta}_i)+A_{-s}(\dot{\sigma})
+A_0(\dot{\tau}_c)$. It readily follows that the additional term $\dot{\bm{\tilde{\omega}}}$ from inverting (\ref{variation_int_eqn_omega}) does not contribute to the leading order analysis, and the energy estimates for $\beta \ne 0$ proceed exactly as in the case $\beta=0$.

\section{Conclusions}
A convergence proof has been presented for a boundary integral method for interfacial Stokes flow.
While previous convergence studies of the boundary integral method exist for interfacial potential flow, this is the first work that we are aware of for the important case of interfacial Stokes flow.
Our analysis has focused on a spectrally accurate numerical method, adapted in this paper from  \cite{HSB:2012},  \cite{XBS:2013}, for a Hookean elastic capsule with membrane bending stress evolving in an externally applied strain or shear flow. The method is rather general, and setting the interfacial tension $\MCAL{S}$ to a constant and the bending stress $\kappa_B$ to zero  gives a method for simulating a drop or bubble in a shear or strain flow which has been widely applied. 
The method is based on an arclength-angle parameterization of the interface which was introduced in \cite{HLS:1994} and first adapted to Stokes flow in \cite{MCAK1:2001}, \cite{MCAK2:2002}.

The main task in the proof is to estimate the variations or errors such as  $\dot{\theta}=\theta_i-\theta(\alpha_i)$  between the discrete and exact solutions at time $t$.
This is done by estimating the most singular terms in the variations, and separating into linear and nonlinear terms.
The nonlinear terms are controlled by the high (spectral) accuracy of the method for smooth solutions, and thus the crux of the proof is show the stability of linear terms in the variation, which is done with the aid of energy estimates.

The presence of high derivatives due to the bending forces requires a substantially different analysis from previous proofs of the convergence of the boundary integral method for potential flow.
In particular, our energy estimates make significant use of the smoothing properties of the highest derivative term, or so-called 'parabolic smoothing', to control lower order derivatives.
This allows us to close the energy estimates and prove stability of the method.

The proof also clarifies the role of numerical filtering.
We find that targeted filtering is necessary to control the potentially destabilizing effect of aliasing errors and prove stability of the method.
Crucially, however, our analysis shows that the filter should not be applied to the highest derivative term coming from the membrane bending stress, so that the smoothing properties of this term can utilized.

Our work also provides a convergence analysis of a widely used boundary integral method for drops and bubbles without a surrounding elastic membrane, in which 
$\MCAL{S}=\mbox{constant}$ and $\kappa_B=0$ is zero.

Another important application of  BI methods is in computing the evolution of vesicles with inextensible membranes, in which $s_\alpha \equiv 1$. 
BI methods for inextensible vesicles have been developed in, e.g.,  \cite{Quaife:2016}, \cite{Veerapaneni:2009}. The inextensibility constraint can be approximately satisfied in our method by choosing a large constant $E$ in (\ref{tension_dimensional}) (and using a different nondimensionalization), which keeps $s_\alpha$ near $1$ \cite{Beale:2012}.  In future work, we may consider the convergence analysis for an algorithm  in which  the tension $\MCAL{S}(\alpha,t)$ is chosen to exactly  enforce the inextensibility constraint. 

\appendix
\section{Proof of Lemmas} \label{sec:proof_lemmas}

\textit{Proof of Lemma \ref{S_h_estimate}.} 
Let $\hat{f}_k^e$ to be the exact Fourier coefficient of $f$. Then \cite{ET:1987}
\begin{equation}\label{Fourier_approx_coeff_defn_in_terms_of_exact}
\hat{f}_k=\hat{f}_k^e+\DPS\sum_{j\neq 0}\hat{f}_{k+Nj}^e \mbox{, for }k=-\frac{N}{2}+1,\cdots,\frac{N}{2}\mbox{,}
\end{equation}
is the computed Fourier coefficient from (\ref{APPFT}), where the sum represents high-wave-number modes  that are aliased to $k\in\bigmpa{-\frac{N}{2}+1,\frac{N}{2}}$.  The sum in (\ref{Fourier_approx_coeff_defn_in_terms_of_exact}) is from $j=-\infty$ to $\infty$, excluding $j=0$.   Introduce the notation $\abs{k}\leq\frac{N'}{2}$ defined as $\bigbra{k:-\frac{N}{2}+1 \leq k\leq\frac{N}{2}}$ and  $\abs{k}>\frac{N'}{2}$, which is defined as $\bigbra{k: \frac{N}{2}<k \mbox{ or } k\leq-\frac{N}{2}}$.
Then we have the estimate
\begin{align}\label{S_h_err_total}
\abs{S_h f(\alpha_i)-f_{\alpha}(\alpha_i)}
&=\Biggabs{
\DPS\sum_{\abs{k}\leq\frac{N'}{2}} k(\hat{f}_k-\hat{f}_k^e) e^{ik\alpha_i}
-\DPS\sum_{\abs{k}>\frac{N'}{2}} k\hat{f}_k^e e^{ik\alpha_i}
}\nonumber \\
&\leq
\DPS\sum_{\abs{k}\leq\frac{N'}{2}} \abs{k}\abs{\hat{f}_k-\hat{f}_k^e}
+\DPS\sum_{\abs{k}>\frac{N'}{2}} \abs{k}\abs{\hat{f}_k^e}\mbox{.}
\end{align}
The first term on the right hand of (\ref{S_h_err_total}) is the aliasing error, and the second term is the truncation error.
We use (\ref{Fourier_approx_coeff_defn_in_terms_of_exact}) to bound the aliasing error as
\begin{align}\label{aliasing_err_est_1}
&\DPS\sum_{\abs{k}\leq\frac{N'}{2}}
\abs{k}\abs{\hat{f}_k-\hat{f}_k^e}
=\DPS\sum_{\abs{k}\leq\frac{N'}{2}}
\abs{k} \Biggabs{\DPS\sum_{j\neq 0} \hat{f}_{k+Nj}^e}\nonumber \\
&\leq\DPS\sum_{\substack{\abs{k}\leq\frac{N'}{2}}} \sum_{j \ne 0}
\abs{k+jN} \abs{\hat{f}_{k+Nj}^e}
\leq\DPS\sum_{\abs{\tilde{k}}\geq\frac{N}{2}}
\abs{\tilde{k}} \abs{\hat{f}_{\tilde{k}}^e}\mbox{,}
\end{align}
where $\tilde{k}=k+jN$, with $j\neq 0$.
The last line of (\ref{aliasing_err_est_1}) follows from
\begin{equation}\label{S_h_estimate_index_eqn}
\abs{\tilde{k}}=\abs{k+jN}
\geq\bigabs{\abs{k}-j\abs{N}}
\geq\Bigabs{\frac{N}{2}-N}
\geq\frac{N}{2}\mbox{.}
\end{equation}
The aliasing error is further bounded by (dropping the tilde)
\begin{align}
\DPS\sum_{\abs{k}\geq\frac{N}{2}}
\abs{k}\abs{\hat{f}_k^e}
&\leq
\DPS\sum_{\abs{k}\geq\frac{N}{2}}
\frac{\abs{k}^{s+1}}{\abs{k}^s}\abs{\hat{f}_k^e}
\nonumber \\
&\leq
\Bigpa{\DPS\sum_{\abs{k}\geq\frac{N}{2}}
\abs{k}^{2(s+1)}\abs{\hat{f}_k^e}^2}^{\frac{1}{2}}
\biggpa{\DPS\sum_{\abs{k}\geq\frac{N}{2}}
\frac{1}{\abs{k}^{2s}}}^{\frac{1}{2}}
\nonumber \\
&\leq c\norm{f}_{{s+1}}
\bigpa{N^{-2s+1}}^{\frac{1}{2}}
\nonumber \\
&\leq c h^{s-\frac{1}{2}}\norm{f}_{s+1}\mbox{, using }h=\frac{2\pi}{N}.
\end{align}
In the third inequality above we have used the bound
\begin{equation}
\DPS\sum_{\abs{k}\geq\frac{N}{2}}
\frac{1}{\abs{k}^{2s}}<c N^{-2s+1}\mbox{.}
\end{equation}
The truncation error is bounded as (starting from (\ref{S_h_err_total}))
\begin{align}
\DPS\sum_{\abs{k}>\frac{N'}{2}} \abs{k}\abs{\hat{f}_k^e}
&=\DPS\sum_{\abs{k}>\frac{N'}{2}} \frac{\abs{k}^{s+1}}{\abs{k}^s}\abs{\hat{f}_k^e}
\nonumber \\
&\leq \Bigpa{\DPS\sum_{\abs{k}>\frac{N'}{2}} \abs{k}^{2(s+1)}\abs{\hat{f}_k^e}^2}^{\frac{1}{2}}
\Bigpa{\DPS\sum_{\abs{k}>\frac{N'}{2}} \abs{k}^{-2s}}^{\frac{1}{2}} \nonumber \\
&\leq c h^{s-\frac{1}{2}}\norm{f}_{s+1}\mbox{.}
\end{align}
Combine the estimates of aliasing error and truncation error to obtain
\begin{equation}\label{SHERREST}
\abs{S_h f(\alpha_i)-f_{\alpha}(\alpha_i)}
\leq c h^{s-\frac{1}{2}} \norm{f}_{s+1}\mbox{.}
\end{equation}
The proof of (\ref{SHERREST}) for $D_h$ instead of $S_h$ is similar.

\textit{Proof of Lemma \ref{Int_product_rule_f_S_h_g_lemma}.}
Let
\begin{align}
\hat{F}_k=\DPS\sum_{m=-\infty}^{\infty}
\hat{f}_{k+mN}^e\mbox{ for }
-\frac{N}{2}+1\leq k\leq \frac{N}{2}
\end{align}
denote the discrete Fourier coefficients of a smooth periodic function  $f(\alpha)$ (taking aliasing into account),  where $\hat{f}^e_k$ are the exact Fourier coefficients of $f$.
Thus, $f(\alpha_i)$ and the discrete function $\phi_i$ have the representation
\begin{align}
f(\alpha_i)&=\DPS\sum_{k=-\frac{N}{2}+1}^{\frac{N}{2}}
\hat{F}_k e^{ik\alpha_i}\mbox{,}\label{Fourier_expansion_f}\\
\phi_i&=\DPS\sum_{k=-\frac{N}{2}+1}^{\frac{N}{2}}
\hat{\phi}_k e^{ik\alpha_i}\mbox{.}\label{Fourier_expansion_phi}
\end{align}
Our interest is in obtaining an estimate for
\begin{align}\label{num_approx_Int_prod_f_S_h_phi}
S_h^{-1}(f(\alpha_i)S_h \phi_i)
=\DPS\sum_{\substack{k=-\frac{N}{2}+1\\k\neq 0}}^{\frac{N}{2}}
\frac{1}{ik}\widehat{(f S_h \phi)}_k
e^{ik\alpha_i}\mbox{,}
\end{align}
where $\widehat{(f S_h \phi)}_k$ denotes the discrete Fourier coefficients of the product $f S_h\phi$.

We shall need an expression for the Fourier coefficients of the product of a smooth function with a discrete function.
For a given $k$, define the sets
\begin{align}
I_{n,k}&=\Bigbra{n\in [-N/2+1,N/2]:-\frac{N}{2}+1\leq k-n\leq\frac{N}{2}}\mbox{,}\nonumber\\
J_{n,k}&=\Bigbra{n\in  [-N/2+1,N/2]:-\frac{N}{2}+1\leq k+N-n\leq\frac{N}{2}}\mbox{,}\nonumber\\
K_{n,k}&=\Bigbra{n\in  [-N/2+1,N/2]:-\frac{N}{2}+1\leq k-N-n\leq\frac{N}{2}}\mbox{.}
\end{align}
Using (\ref{Fourier_expansion_f}) and (\ref{Fourier_expansion_phi}), the product $f(\alpha_i) \phi_i$ can be written as
\begin{align}\label{num_approx_prod_f_alpha_phi_decomp}
f(\alpha_i)\phi_i
=\Biggpa{
\DPS\sum_{k=-N+2}^{-\frac{N}{2}}
+\DPS\sum_{k=-\frac{N}{2}+1}^{\frac{N}{2}}
+\DPS\sum_{k=\frac{N}{2}+1}^N
}
\DPS\sum_{n\in I_{n,k}}
\hat{F}_{k-n}\hat{\phi}_n e^{ik\alpha_i}
\mbox{.}
\end{align}
The wave numbers in the first and third sums in parenthesis are aliased to $k\in [-\frac{N}{2}+1,\frac{N}{2}]$.
Rewriting these two sums by replacing $k$ with $k-N$ and $k+N$, respectively, we obtain the equivalent representation
\begin{align}\label{num_approx_prod_f_phi}
f(\alpha_i)\phi_i
&=\DPS\sum_{k=-\frac{N}{2}+1}^{\frac{N}{2}}
\Biggpa{
\DPS\sum_{n\in I_{n,k}}
\hat{F}_{k-n}\hat{\phi}_n
+\DPS\sum_{n\in J_{n,k}}
\hat{F}_{k+N-n}\hat{\phi}_n\nonumber\\
+&\DPS\sum_{n\in K_{n,k}}
\hat{F}_{k-N-n}\hat{\phi}_n
}e^{ik\alpha_i}\mbox{,}
\end{align}
where the requirement $n\in J_{n,k}$ in the second double sum of (\ref{num_approx_prod_f_phi}) and $n\in K_{n,k}$ in the third has allowed us to replace $\DPS\sum_{k=-\frac{N}{2}+1}^0$ and $\DPS\sum_{k=2}^{\frac{N}{2}}$, respectively, with $\DPS\sum_{k=-\frac{N}{2}+1}^{\frac{N}{2}}$.
From (\ref{num_approx_prod_f_phi}), we therefore have for $-\frac{N}{2}+1\leq k\leq\frac{N}{2}$
\begin{align}\label{Fourier_coeff_f_S_h_phi}
\widehat{(f S_h \phi)}_k
&=\DPS\sum_{n\in I_{n,k}}
in\hat{F}_{k-n}\hat{\phi}_n
+\DPS\sum_{n\in J_{n,k}}
in\hat{F}_{k+N-n}\hat{\phi}_n\nonumber\\
+&\DPS\sum_{n\in K_{n,k}}
in\hat{F}_{k-N-n}\hat{\phi}_n\mbox{,}
\end{align}
and similarly,
\begin{align}\label{Fourier_coeff_phi_S_h_f}
\widehat{(\phi S_h f)}_k
&=\DPS\sum_{n\in I_{n,k}}
i(k-n)\hat{F}_{k-n}\hat{\phi}_n
+\DPS\sum_{n\in J_{n,k}}
i(k+N-n)\hat{F}_{k+N-n}\hat{\phi}_n\nonumber\\
+&\DPS\sum_{n\in K_{n,k}}
i(k-N-n)\hat{F}_{k-N-n}\hat{\phi}_n\mbox{.}
\end{align}
Combining (\ref{Fourier_coeff_f_S_h_phi}) with the negative of (\ref{Fourier_coeff_phi_S_h_f}) gives for $-\frac{N}{2}+1\leq k\leq\frac{N}{2}$
\begin{align} \label{f_Shphi_hat}
\widehat{(f S_h \phi)}_k&=-\widehat{(\phi S_h f)}_k
+\DPS\sum_{n\in I_{n,k}}
ik\hat{F}_{k-n}\hat{\phi}_n\nonumber\\
+&\DPS\sum_{n\in J_{n,k}}
i(k+N)\hat{F}_{k+N-n}\hat{\phi}_n
+\DPS\sum_{n\in K_{n,k}}
i(k-N)\hat{F}_{k-N-n}\hat{\phi}_n\mbox{.}
\end{align}
We next recognize from (\ref{num_approx_prod_f_phi}) that
\begin{align}\label{Fourier_coeff_f_phi}
\widehat{(f \phi)}_k
&=\DPS\sum_{n\in I_{n,k}}
\hat{F}_{k-n}\hat{\phi}_n
+\DPS\sum_{n\in J_{n,k}}
\hat{F}_{k+N-n}\hat{\phi}_n\nonumber\\
+&\DPS\sum_{n\in K_{n,k}}
\hat{F}_{k-N-n}\hat{\phi}_n\mbox{,}
\end{align}
and combining this with (\ref{f_Shphi_hat}) shows that the Fourier coefficients in (\ref{num_approx_Int_prod_f_S_h_phi}) can be written as
\begin{align}\label{Fourier_coeff_Int_f_S_h_phi}
\frac{\widehat{(f S_h \phi)}_k}{ik}&=-\frac{\widehat{(\phi S_h f)}_k}{ik}
+\widehat{(f\phi)}_k
\nonumber\\
+&\DPS\sum_{n\in J_{n,k}}
\frac{N}{k}\hat{F}_{k+N-n}\hat{\phi}_n
-\DPS\sum_{n\in K_{n,k}}
\frac{N}{k}\hat{F}_{k-N-n}\hat{\phi}_n\mbox{,}
\end{align}
for $-\frac{N}{2}+1\leq k\leq\frac{N}{2}$ with $k\neq 0$.
The first three terms in (\ref{Fourier_coeff_Int_f_S_h_phi}) are the Fourier coefficients of the first three terms in (\ref{num_approx_Int_prod_f_S_h_phi_1}).
To finish the derivation of (\ref{num_approx_Int_prod_f_S_h_phi_1}), we simply need to estimate the two sums on the right hand side of (\ref{Fourier_coeff_Int_f_S_h_phi}).
The main difficulty is to overcome the large factor of $N$.

Each of the two sums in (\ref{Fourier_coeff_Int_f_S_h_phi}) is a discrete convolution which represents the $k$-th Fourier coefficient of the product a smooth function with $\phi_i$.
Denoting the smooth functions by $f_1$ and $f_2$, we form the $l^2$-norm of the products with the aid of the discrete Parseval equality (Lemma \ref{IP_prod_lemma}),
\begin{align}\label{norm_f_1_prod_phi}
\norm{f_1(\cdot)\phi}_{l^2}
=\Biggpa{
2\pi\DPS\sum_{k=-\frac{N}{2}+1}^{\frac{N}{2}}
\left|
\DPS\sum_{n\in J_{n,k}}
\frac{N}{k}\hat{F}_{k+N-n}\hat{\phi}_n
\right|^2
}^{\frac{1}{2}}\mbox{,}
\end{align}
and
\begin{align}\label{norm_f_2_prod_phi}
\norm{f_2(\cdot)\phi}_{l^2}
=\Biggpa{
2\pi\DPS\sum_{k=-\frac{N}{2}+1}^{\frac{N}{2}}
\left|
\DPS\sum_{n\in K_{n,k}}
\frac{N}{k}\hat{F}_{k-N-n}\hat{\phi}_n
\right|^2
}^{\frac{1}{2}}\mbox{.}
\end{align}
We estimate these $l^2$-norms by decomposing the wavenumber range into
\begin{align}
\MCAL{\varkappa}_1
=\Bigbra{k: \frac{N}{4}\leq \abs{k}\leq\frac{N}{2}}\mbox{,}
\end{align}
and
\begin{align}\label{var_kappa_2_set_defn}
\MCAL{\varkappa}_2
=\Bigbra{k: 0<\abs{k}<\frac{N}{4}}\mbox{.}
\end{align}
The sum over $k\in\varkappa_1$ is bounded using $\abs{\frac{N}{k}}\leq 4$.
For example,
\begin{align}
\Biggmpa{
2\pi\DPS\sum_{k\in\varkappa_1}
\left|\frac{N}{k}
\DPS\sum_{n\in J_{n,k}}
\hat{F}_{k+N-n}\hat{\phi}_n
\right|^2
}^{\frac{1}{2}}
&\leq4\Biggmpa{
2\pi\DPS\sum_{k\in\varkappa_1}
\left|
\DPS\sum_{n\in J_{n,k}}
\hat{F}_{k+N-n}\hat{\phi}_n
\right|^2
}^{\frac{1}{2}}\nonumber\\
&\leq4\Biggmpa{
2\pi\DPS\sum_{k=-\frac{N}{2}+1}^{\frac{N}{2}}
\left|
\DPS\sum_{n\in J_{n,k}}
\hat{F}_{k+N-n}\hat{\phi}_n
\right|^2
}^{\frac{1}{2}}\mbox{}\label{Decomp_wavenumber_1}\\
&=4\Biggmpa{
2\pi\DPS\sum_{k=\frac{N}{2}+1}^N
\DPS\sum_{n\in I_{n,k}}
\left|
\hat{F}_{k-n}\hat{\phi}_n
\right|^2
}^{\frac{1}{2}}\mbox{,}\label{Decomp_wavenumber_2}
\end{align}
where in the latter equality we have first replaced $\DPS\sum_{k=-\frac{N}{2}+1}^{\frac{N}{2}}$ in (\ref{Decomp_wavenumber_1}) with $\DPS\sum_{k=-\frac{N}{2}+1}^0$
(per the comment following (\ref{num_approx_prod_f_phi}))
and then substituted $k-N$ for $k$.
The expression in (\ref{Decomp_wavenumber_2}) is clearly bounded by a constant times the extended $l^2$ norm of (\ref{num_approx_prod_f_alpha_phi_decomp}),
\begin{align}
\norm{f(\cdot)\phi}_{l^2_{ext}}
\equiv\Biggmpa{
2\pi\DPS\sum_{k=-N+2}^N
\left|
\DPS\sum_{n\in I_{n,k}}
\hat{F}_{k-n}\hat{\phi}_n
\right|^2
}^{\frac{1}{2}}\mbox{,}
\end{align}
for which
\begin{align}
\norm{f(\cdot)\phi}_{l^2_{ext}}
\leq \norm{f}_{\infty}\norm{\phi}_{l^2_{ext}}
=\norm{f}_{\infty}\norm{\phi}_{l^2}\mbox{,}
\end{align}
(where $\norm{\phi}_{l^2_{ext}}$ is defined by zero padding).

Hence, in (\ref{norm_f_1_prod_phi}) (and similarly in (\ref{norm_f_2_prod_phi})), the sum over $k\in\varkappa_1$ is bounded by $c\norm{\phi}_{l^2}$.

The sum over $k\in\varkappa_2$ 
requires a different estimate.
For $k\in\varkappa_2$, we have
\begin{align}\label{wavenumber_condition_10_1}
k+N-n\geq \frac{N}{4}\mbox{ when }n\in\Bigmpa{-\frac{N}{2}+1,\frac{N}{2}}\mbox{.}
\end{align}
Moreover, $\hat{f}_k^e$ decay like $\mbox{O}(k^{-s})$, where $s$ is the number of continuous derivatives of $f$, and it is easily seen that when $s>1$, $\hat{F}_k$ also decays like $\mbox{O}(k^{-s})$.
\newline
Hence,
\begin{align}\label{Fourier_coeff_F_k_plus_N_minus_n_bound}
\abs{\hat{F}_{k+N-n}}
\leq c\abs{k+N-n}^{-s}
\leq c\Bigpa{\frac{N}{4}}^{-s}\mbox{,}
\end{align}
per (\ref{wavenumber_condition_10_1}).
Considering (\ref{norm_f_1_prod_phi}), it follows that the sum over $k\in\varkappa_2$ satisfies the bound
\begin{align}
\Biggmpa{
2\pi\DPS\sum_{k\in\varkappa_2}
\left|
\DPS\sum_{n\in J_{n,k}}
\frac{N}{k}
\hat{F}_{k+N-n}\hat{\phi}_n
\right|^2
}^{\frac{1}{2}}
\leq&cN^{-s+1}\Biggmpa{
\DPS\sum_{\abs{k}\leq\frac{N}{4}}
\left|
\DPS\sum_{n\in J_{n,k}}
\hat{\phi}_n
\right|^2
}^{\frac{1}{2}}\nonumber\\
\leq&cN^{-s+2}\Biggmpa{
\DPS\sum_{\abs{k}\leq\frac{N}{4}}
\DPS\sum_{n\in J_{n,k}}
\left| \hat{\phi}_n \right|^2
}^{\frac{1}{2}}\nonumber\\
\leq&cN^{-s+\frac{5}{2}}\norm{\phi}_{l^2}\mbox{.}
\end{align}
Here we have used (\ref{Fourier_coeff_F_k_plus_N_minus_n_bound}) in the first inequality,
\begin{align}
\left| \DPS\sum_{n \in J_{n,k}} \phi_n \right|^2\leq N^2\DPS\sum_{n \in J_{n,k}} \left| \phi_n \right|^2
\end{align}
in the second, and replaced the double sum  $\DPS\sum_{\abs{k}\leq\frac{N}{4}} \sum_{n \in J_{n,k}}$ by $\frac{N}{4} \DPS\sum_{n=-N/2+1}^{N/2}$ in the third.
It follows that when $s\geq \frac{5}{2}$, the sum over $k\in\varkappa_2$ in (\ref{norm_f_1_prod_phi}) and similarly in (\ref{norm_f_2_prod_phi}) are bounded by $c\norm{\phi}_{l^2}$.
Thus, both (\ref{norm_f_1_prod_phi}) and (\ref{norm_f_2_prod_phi}) are bounded by $c\norm{\phi}_{l^2}$, which finishes our estimate of the two sums in (\ref{Fourier_coeff_Int_f_S_h_phi}).
This completes the derivation of (\ref{num_approx_Int_prod_f_S_h_phi_1}).

Equation (\ref{num_approx_Int_prod_phi_S_h_f_A}) readily follows by nothing that both $D_h S_h^{-1}(f(\alpha_i)\phi_i)$ and $S_h^{-1}(f(\alpha_i)S_h\phi_i)$ are $A_0(\phi_i)$ operators,  the latter of which  is a consequence of (\ref{num_approx_Int_prod_f_S_h_phi_1}).

\textit{Proof of Lemma \ref{I_plus_beta_K_matrix_lemma_6_1}.}
First, we provide the result for the corresponding continuous equation, which is denoted by
\begin{align}\label{I_plus_beta_K_c}
(\MBF{I}+\beta\MBF{K_c})\bm{\tilde{\omega}}(\alpha)
=-\beta \MBF{K}_c \MBF{g}(\alpha) \mbox{.}
\end{align}
This system can be solved by the method of successive approximations for sufficiently small $\beta>0$.
More precisely, there exists a value $\beta_0(T)$  such that for $0 \leq \beta<\beta_0(T)$ the continuous operator $\bigpa{I+\beta \MBF{K}_c}$ is invertible and has a bounded inverse \cite{SGM:1964}, i.e.,
\begin{align}
\norm{
\bigpa{I+\beta \MBF{K}_c}^{-1}
}_{L^2}
\leq c
\mbox{.}
\end{align}
We summarize the argument, following the analysis in \cite{BHL:1996}, that the discrete operator $I+\beta \MBF{K}$ is likewise invertible.

For any discrete $l^2$ function ${\omega}_i$, define
\begin{align}\label{Phi_i_eqn_3_1}
\bm{\Phi}_i=\bm{\omega}_i+
\beta \MBF{K}
\bm{\omega}_i
\end{align}
where $\bm{\omega}_i$ is the vector counterpart of $\omega_i$ (see (\ref{g_omega_omegatil_def})).
We show that $\bm{\Phi}=0$ implies $\bm{\omega}=0$, which demonstrates the invertibility of $I+\beta \MBF{K}$.
The strategy is to exactly represent the discrete equation (\ref{Phi_i_eqn_3_1}) as a continuous (integral) equation with a piecewise continuous integrand.
The invertibility result for a continuous equation then will imply the invertibility of the discrete equation.
This construction is essentially the same as in the appendix of \cite{BHL:1996}, and  relies   on the smoothness of the continuous
kernel $\MBF{K}_c(\alpha, \alpha')$ and the consistency of our discretization. We refer the reader to \cite{BHL:1996} for details.

\section{Estimates for the nonlinear terms in the velocity variation}

We first present expressions for the nonlinear terms in the variation of the velocity, i.e.,  $\dot{U}_{l,i}^{NL}$   for $l=1, ..., 3$.
The variation $\dot{U}_{1,i}^{NL}$ is given by
\begin{align}\label{dot_U_1i_NL}
\dot{U}_{1,i}^{NL}&=-\frac{h}{\pi}
\DPS\sum_{\substack{j=-\frac{N}{2}+1\\(j-i)\mbox{ odd}}}^{\frac{N}{2}}
\biggbra{\dot{\omega}_j^p \biggpa{
2\mbox{Re}\Bigpa{
\frac{S_h\tau_j}{\tau_j-\tau_i}
}^{\cdot}
}
+\omega_h^p(\alpha_j) \biggmpa{
2\mbox{Re}\biggpa{S_h \dot{\tau}_j
\Bigpa{\frac{1}{\tau_j-\tau_i}}^{\cdot}
}
}\nonumber \\
&+\omega_h^p(\alpha_j)2\mbox{Re}\biggpa{
\frac{S_h\tau_h(\alpha_j)
\bigpa{\dot{\tau}_j-\dot{\tau}_i}^2}{[\tau_h(\alpha_j)-\tau_h(\alpha_i)]^2
\bigpa{\tau_h(\alpha_j)-\tau_h(\alpha_i)+\dot{\tau}_j-\dot{\tau}_i}}
}
}\mbox{.}
\end{align}
Note that the third term within braces above comes from the nonlinear term in the variation of $\bigpa{\frac{1}{\tau_j-\tau_i}}^{\cdot}$, via (\ref{quotient_eqn_f}).

To compactly represent the other nonlinear terms, introduce the notation
\begin{align}
[f_i,g_i,h_i]^{\cdot}=\dot{f}_i \dot{g}_i h(\alpha_i)
+\dot{f}_i g(\alpha_i) \dot{h}_i
+f(\alpha_i) \dot{g}_i \dot{h}_i
+\dot{f}_i \dot{g}_i \dot{h}_i\mbox{,}
\end{align}
which gives the nonlinear terms in the variation of the product $f_ig_ih_i$.
A similar notation is used for the nonlinear terms in the variation of a product with four or more discrete functions.
Then,
\begin{align}\label{dot_U_2i_NL}
\dot{U}_{2,i}^{NL}&=\frac{h}{\pi}
\DPS\sum_{\substack{j=-\frac{N}{2}+1\\(j-i)\mbox{ odd}}}^{\frac{N}{2}}
\Biggbra{
\Bigmpa{\OVL{\omega}_j^p,S_h\tau_j,\frac{1}{\OVL{\tau_j}-\OVL{\tau_i}}}^{\cdot}
\nonumber \\
&+\frac{\OVL{\omega^p_h}(\alpha_j) S_h\tau_h(\alpha_j)
\bigpa{\OVL{\dot{\tau}_j}-\OVL{\dot{\tau}_i}}^2}{[\OVL{\tau_h}(\alpha_j)
-\OVL{\tau_h}(\alpha_i)]^2
\bigpa{ \OVL{\tau_h}(\alpha_j)-\OVL{\tau_h}(\alpha_i)+\OVL{\dot{\tau_j}}-\OVL{\dot{\tau_i}} }  } 
}\mbox{,}
\end{align}
where for example in $\bigmpa{\OVL{\omega}_j^p,S_h\tau_j,\frac{1}{\OVL{\tau_j}-\OVL{\tau_i}}}^{\cdot}$, if $g_j=S_h\tau_j$, then $g(\alpha_j)=S_h\tau_h(\alpha_j)$.
Continuing,
\begin{align}\label{dot_U_3i_NL}
\dot{U}_{3,i}^{NL}&=-\frac{h}{\pi}
\DPS\sum_{\substack{j=-\frac{N}{2}+1\\(j-i)\mbox{ odd}}}^{\frac{N}{2}}
\Biggbra{
\Bigmpa{\OVL{\omega_j^p},\tau_j-\tau_i,\OVL{S_h\tau_j},
\frac{1}{\OVL{\tau_j}-\OVL{\tau_i}},\frac{1}{\OVL{\tau_j}-\OVL{\tau_i}}
}^{\cdot}
\nonumber \\
&+\frac{\OVL{\omega_h^p}(\alpha_j)\bigmpa{\tau_h(\alpha_j)-\tau_h(\alpha_i)}\OVL{S_h\tau_h}(\alpha_j)
\bigmpa{\OVL{\dot{\tau}_j}-\OVL{\dot{\tau}_i}}^2
}{
[\OVL{\tau_h}(\alpha_j)-\OVL{\tau_h}(\alpha_i)]^2
\bigpa{ \OVL{\tau_h}(\alpha_j)-\OVL{\tau_h}(\alpha_i)+\OVL{\dot{\tau}_j}-\OVL{\dot{\tau}_i} }
}
}\mbox{.}
\end{align}

We now estimate these nonlinear terms.
Consider first the expression (\ref{dot_U_1i_NL}) for $\dot{U_1}^{NL}$.
We expand some of the variations in this expression using Lemmas \ref{product_rule_for_dots_lemma} and \ref{quotient_rule_1_over_f_lemma}, for example,
\begin{align}\label{half_ln_dot_appendix_eqn_2_1}
\Bigpa{\frac{S_h\tau_j}{\tau_j-\tau_i}}^{\cdot}
&=-\frac{\tau_{\alpha}(\alpha_j)}{(\tau(\alpha_j)-\tau(\alpha_i))^2}
\bigpa{\dot{\tau}_j-\dot{\tau}_i}
+\frac{S_h\dot{\tau}_j}{\tau(\alpha_j)-\tau(\alpha_i)}\nonumber\\
&+\frac{\tau_{\alpha}(\alpha_j)(\dot{\tau}_j-\dot{\tau}_i)^2}{(\tau(\alpha_j)-\tau(\alpha_i))^2(\tau(\alpha_j)-\tau(\alpha_i)+\dot{\tau}_j-\dot{\tau}_i)}\nonumber\\
&+S_h\dot{\tau}_j \Bigpa{
\frac{1}{\tau_j-\tau_i}
}^{\cdot}
+\mbox{O}(h^s)\mbox{.}
\end{align}
To estimate the terms involving differences of variations in (\ref{half_ln_dot_appendix_eqn_2_1}), we make use of the Fourier series representation
\begin{align}
\frac{\dot{\tau}_j-\dot{\tau}_i}{\alpha_j-\alpha_i}
&=\DPS\sum_{k=-\frac{N}{2}+1}^{\frac{N}{2}}
\hat{\dot{\tau}}_k
\frac{e^{ik\alpha_j}-e^{ik\alpha_i}}{\alpha_j-\alpha_i}\mbox{,}
\end{align}
so that
\begin{align}\label{zeta_over_alpha_dot_appendix_eqn_3_1}
\Bigabs{\frac{\dot{\tau}_j-\dot{\tau}_i}{\alpha_j-\alpha_i}}
&\leq \DPS\sum_{k=-\frac{N}{2}+1}^{\frac{N}{2}}
\abs{\hat{\dot{\tau}}_k}
\biggabs{\frac{e^{ik\alpha_j}-e^{ik\alpha_i}}{\alpha_j-\alpha_i}}
\mbox{,}
\nonumber\\
&\leq \DPS\sum_{k=-\frac{N}{2}+1}^{\frac{N}{2}}
|k|\abs{\hat{\dot{\tau}}_k}
\mbox{,}
\nonumber\\
&\leq \Bigpa{
\DPS\sum_{k=-\frac{N}{2}+1}^{\frac{N}{2}}
1}^{\frac{1}{2}}
\norm{S_h\dot{\tau}}_{l^2}
\mbox{,}
\nonumber\\
&\leq \frac{c}{h^{\frac{1}{2}}}\norm{S_h\dot{\tau}}_{l^2}\mbox{,}
\end{align}
for any $-\frac{N}{2}+1\leq j\leq \frac{N}{2}$ and $-\frac{N}{2}+1\leq i\leq \frac{N}{2}$.
The first term on the right hand side of (\ref{half_ln_dot_appendix_eqn_2_1}) is estimated by multiplying and dividing by $\bigpa{\alpha_j-\alpha_i}^2$, and using (\ref{zeta_over_alpha_dot_appendix_eqn_3_1}) to obtain,
\begin{align}\label{zeta_alpha_zeta_over_zeta_minus_alpha_appendix_eqn_4_1}
\biggabs{\frac{\tau_{\alpha}(\alpha_j)(\dot{\tau}_j-\dot{\tau}_i)}{(\tau(\alpha_j)-\tau(\alpha_i))^2}}
\leq \frac{c}{h^{\frac{3}{2}}}\norm{S_h\dot{\tau}}_{l^2}\mbox{,}
\end{align}
The extra factor $\frac{1}{h}$ comes from an extra factor of $\frac{1}{\alpha_j-\alpha_i}$.
It is easy to see that the second, third and fourth terms on the right hand side of (\ref{half_ln_dot_appendix_eqn_2_1}) are also bounded in magnitude by $\frac{c}{h^{\frac{3}{2}}}\norm{S_h\dot{\tau}}_{l^2}$.

Returning to  the expression for $\dot{U_1}^{NL}$ in (\ref{dot_U_1i_NL}), it follows  that the first sum of the right hand side of (\ref{dot_U_1i_NL}) is bounded by
\begin{align}\label{S_h_zeta_j_over_zeta_j_minus_i_bd_57_b_1}
c h^2\norm{\dot{\omega}^p}_{l^2}
+\mbox{O}(h^s)\mbox{,}
\end{align}
using the bound $\frac{c}{h^{\frac{3}{2}}}\norm{S_h\dot{\tau}}_{l^2}$ on the magnitude of (\ref{half_ln_dot_appendix_eqn_2_1}) along with $\norm{S_h\dot{\tau}}_{l^2}=\mbox{O}(h^{\frac{7}{2}})$(see Remark \ref{S_h_dot_zeta_i_remark}) to obtain the factor of $h^2$ in (\ref{S_h_zeta_j_over_zeta_j_minus_i_bd_57_b_1}).
It is easy to see that the second and third sums in equation (\ref{dot_U_1i_NL}) are bounded by $h^\frac{3}{2} \norm{S_h\dot{\tau}}_{l^2}$, which, in view of (\ref{S_h_dot_zeta_i_A_estimate_eqn_174}), implies that these terms are $h^\frac{3}{2} A_0(\dot{\theta})+A_{-s}(\dot{\sigma})+\mbox{O}(h^s)$.
Putting these estimates together, we find that
\begin{align}
\dot{U}_{1,i}^{NL}=
A_0(\dot{\theta})+A_{-1}(\dot{\alpha}_{0i})+A_{-s}(\dot{\sigma})+\mbox{O}(h^s)\mbox{.}
\end{align}
using the estimate (\ref{dot_omega^p_err_decomp}) for $\dot{\omega}^p$.
Estimates for $\dot{U_2}^{NL}$ and $\dot{U_3}^{NL}$ are performed similarly to $\dot{U_1}^{NL}$, and verify that
\begin{align}
\dot{u}^{NL}=A_0(\dot{\theta})+A_{-1}(\dot{\alpha}_{0i})+A_{-s}(\dot{\sigma})+\mbox{O}(h^s)\mbox{.}
\end{align}

\bibliography{Ref_siegel}{}
\bibliographystyle{plain}
\end{document}